\newcommand{\R}{\mathbb{R}}
\newcommand{\N}{\mathbb{N}}
\newcommand{\Blambda}{\boldsymbol\lambda}
\newcommand{\Bo}[1]{\boldsymbol{#1}}
\newcommand{\tableANDalgSize}{\small}
\newcommand{\bec}[1]{\bar{\vec{#1}}}
\newcommand{\hec}[1]{\hat{\vec{#1}}}
\newcommand{\BlueText}[1]{#1}
\renewcommand{\@algocf@capt@plain}{above}
\begin{document}

\newtheorem{defin}{Definition}
\newtheorem{theorem}{Theorem}
\newtheorem{prop}{Proposition}
\newtheorem{lemma}{Lemma}
\newtheorem{corollary}{Corollary}
\newtheorem{alg}{Algorithm}
\newtheorem{remark}{Remark}
\newtheorem{notations}{Notations}
\newtheorem{assumption}{Assumption}
\newtheorem{example}{Example}

\newcommand{\be}{\begin{equation}}
\newcommand{\ee}{\end{equation}}
\newcommand{\ba}{\begin{array}}
\newcommand{\ea}{\end{array}}
\newcommand{\bea}{\begin{eqnarray}}
\newcommand{\eea}{\end{eqnarray}}
\newcommand{\combin}[2]{\ensuremath{ \left( \ba{c} #1 \\ #2 \ea \right) }}
\newcommand{\diag}{{\mbox{diag}}}
\newcommand{\rank}{{\mbox{rank}}}
\newcommand{\dom}{{\mbox{dom{\color{white!100!black}.}}}}
\newcommand{\range}{{\mbox{range{\color{white!100!black}.}}}}
\newcommand{\image}{{\mbox{image{\color{white!100!black}.}}}}
\newcommand{\herm}{^{\mbox{\scriptsize H}}}  
\newcommand{\sherm}{^{\mbox{\tiny H}}}       
\newcommand{\tran}{^{\mbox{\scriptsize T}}}  
\newcommand{\tranIn}{^{\mbox{-\scriptsize T}}}  
\newcommand{\card}{{\mbox{\textbf{card}}}}
\newcommand{\asign}{{\mbox{$\colon\hspace{-2mm}=\hspace{1mm}$}}}
\newcommand{\ssum}[1]{\mathop{ \textstyle{\sum}}_{#1}}

\newcommand{\vbar}{\raisebox{.17ex}{\rule{.04em}{1.35ex}}}
\newcommand{\vbarind}{\raisebox{.01ex}{\rule{.04em}{1.1ex}}}
\newcommand{\D}{\ifmmode {\rm I}\hspace{-.2em}{\rm D} \else ${\rm I}\hspace{-.2em}{\rm D}$ \fi}
\newcommand{\T}{\ifmmode {\rm I}\hspace{-.2em}{\rm T} \else ${\rm I}\hspace{-.2em}{\rm T}$ \fi}
\newcommand{\B}{\ifmmode {\rm I}\hspace{-.2em}{\rm B} \else \mbox{${\rm I}\hspace{-.2em}{\rm B}$} \fi}
\newcommand{\Hil}{\ifmmode {\rm I}\hspace{-.2em}{\rm H} \else \mbox{${\rm I}\hspace{-.2em}{\rm H}$} \fi}
\newcommand{\C}{\ifmmode \hspace{.2em}\vbar\hspace{-.31em}{\rm C} \else \mbox{$\hspace{.2em}\vbar\hspace{-.31em}{\rm C}$} \fi}
\newcommand{\Cind}{\ifmmode \hspace{.2em}\vbarind\hspace{-.25em}{\rm C} \else \mbox{$\hspace{.2em}\vbarind\hspace{-.25em}{\rm C}$} \fi}
\newcommand{\Q}{\ifmmode \hspace{.2em}\vbar\hspace{-.31em}{\rm Q} \else \mbox{$\hspace{.2em}\vbar\hspace{-.31em}{\rm Q}$} \fi}
\newcommand{\Z}{\ifmmode {\rm Z}\hspace{-.28em}{\rm Z} \else ${\rm Z}\hspace{-.38em}{\rm Z}$ \fi}

\newcommand{\sgn}{\mbox {sgn}}
\newcommand{\var}{\mbox {var}}
\newcommand{\E}{\mbox {E}}
\newcommand{\cov}{\mbox {cov}}
\renewcommand{\Re}{\mbox {Re}}
\renewcommand{\Im}{\mbox {Im}}
\newcommand{\cum}{\mbox {cum}}

\renewcommand{\vec}[1]{{\bf{#1}}}     
\newcommand{\vecsc}[1]{\mbox {\boldmath \scriptsize $#1$}}     
\newcommand{\itvec}[1]{\mbox {\boldmath $#1$}}
\newcommand{\itvecsc}[1]{\mbox {\boldmath $\scriptstyle #1$}}
\newcommand{\gvec}[1]{\mbox{\boldmath $#1$}}

\newcommand{\balpha}{\mbox {\boldmath $\alpha$}}
\newcommand{\bbeta}{\mbox {\boldmath $\beta$}}
\newcommand{\bgamma}{\mbox {\boldmath $\gamma$}}
\newcommand{\bdelta}{\mbox {\boldmath $\delta$}}
\newcommand{\bepsilon}{\mbox {\boldmath $\epsilon$}}
\newcommand{\bvarepsilon}{\mbox {\boldmath $\varepsilon$}}
\newcommand{\bzeta}{\mbox {\boldmath $\zeta$}}
\newcommand{\boldeta}{\mbox {\boldmath $\eta$}}
\newcommand{\btheta}{\mbox {\boldmath $\theta$}}
\newcommand{\bvartheta}{\mbox {\boldmath $\vartheta$}}
\newcommand{\biota}{\mbox {\boldmath $\iota$}}
\newcommand{\blambda}{\mbox {\boldmath $\lambda$}}
\newcommand{\bmu}{\mbox {\boldmath $\mu$}}
\newcommand{\bnu}{\mbox {\boldmath $\nu$}}
\newcommand{\bxi}{\mbox {\boldmath $\xi$}}
\newcommand{\bpi}{\mbox {\boldmath $\pi$}}
\newcommand{\bvarpi}{\mbox {\boldmath $\varpi$}}
\newcommand{\brho}{\mbox {\boldmath $\rho$}}
\newcommand{\bvarrho}{\mbox {\boldmath $\varrho$}}
\newcommand{\bsigma}{\mbox {\boldmath $\sigma$}}
\newcommand{\bvarsigma}{\mbox {\boldmath $\varsigma$}}
\newcommand{\btau}{\mbox {\boldmath $\tau$}}
\newcommand{\bupsilon}{\mbox {\boldmath $\upsilon$}}
\newcommand{\bphi}{\mbox {\boldmath $\phi$}}
\newcommand{\bvarphi}{\mbox {\boldmath $\varphi$}}
\newcommand{\bchi}{\mbox {\boldmath $\chi$}}
\newcommand{\bpsi}{\mbox {\boldmath $\psi$}}
\newcommand{\bomega}{\mbox {\boldmath $\omega$}}

\newcommand{\bolda}{\mbox {\boldmath $a$}}
\newcommand{\bb}{\mbox {\boldmath $b$}}
\newcommand{\bc}{\mbox {\boldmath $c$}}
\newcommand{\bd}{\mbox {\boldmath $d$}}
\newcommand{\bolde}{\mbox {\boldmath $e$}}
\newcommand{\boldf}{\mbox {\boldmath $f$}}
\newcommand{\bg}{\mbox {\boldmath $g$}}
\newcommand{\bh}{\mbox {\boldmath $h$}}
\newcommand{\bp}{\mbox {\boldmath $p$}}
\newcommand{\bq}{\mbox {\boldmath $q$}}
\newcommand{\br}{\mbox {\boldmath $r$}}
\newcommand{\bs}{\mbox {\boldmath $s$}}
\newcommand{\bt}{\mbox {\boldmath $t$}}
\newcommand{\bu}{\mbox {\boldmath $u$}}
\newcommand{\bv}{\mbox {\boldmath $v$}}
\newcommand{\bw}{\mbox {\boldmath $w$}}
\newcommand{\bx}{\mbox {\boldmath $x$}}
\newcommand{\by}{\mbox {\boldmath $y$}}
\newcommand{\bz}{\mbox {\boldmath $z$}}



\title{On the Convergence of Alternating Direction Lagrangian Methods for Nonconvex Structured Optimization Problems }
\author{Sindri Magn\'{u}sson,
        Pradeep Chathuranga Weeraddana,~\IEEEmembership{Member,~IEEE,}
        \\Michael G. Rabbat,~\IEEEmembership{Member,~IEEE,}
        Carlo Fischione,~\IEEEmembership{Member,~IEEE,}
\thanks{S. Magn\'{u}sson, P. C. Weeraddana, and C. Fischione are with the Electrical Engineering School, Access Linnaeus Center, KTH Royal Institute of Technology, Stockholm, Sweden. 
 \textit{ \{sindrim, chatw, carlofi\}@kth.se}.  

 M. G. Rabbat is with the Department of Electrical and Computer Engineering, McGill University, Montr\'{e}al, Canada.  E-mail: michael.rabbat@mcgill.ca.
}}

\maketitle

\begin{abstract}
  Nonconvex and structured optimization problems arise in many engineering applications that demand scalable and distributed solution methods.
 The study of the convergence properties of these methods is in general difficult due to the nonconvexity of the problem.
 In this paper, two distributed solution methods that combine the fast convergence properties of augmented Lagrangian-based methods with the separability properties of  alternating optimization are investigated. 
 The first method is adapted from the classic quadratic penalty function method and is called the \emph{Alternating Direction Penalty Method} (ADPM). 
 Unlike the original quadratic penalty function method, in which single-step optimizations are adopted, ADPM uses an alternating optimization, which in turn makes it scalable. 
 The second method is the well-known \emph{Alternating Direction Method of Multipliers} (ADMM). 
 It is shown that ADPM for nonconvex problems asymptotically converges to a primal feasible point under mild conditions \BlueText{and an additional condition ensuring that it asymptotically reaches the standard first order necessary conditions for local optimality are introduced. }
 In the case of the ADMM, novel sufficient conditions under which the algorithm asymptotically reaches the standard first order necessary conditions are established.  
 \BlueText{ Based on this, complete convergence of ADMM for a class of low dimensional problems are characterized. }
 Finally, the results are illustrated by applying ADPM and ADMM to a nonconvex localization problem in wireless sensor~networks. 
\end{abstract}

\begin{keywords}\vspace{-0mm}
 Nonconvex Optimization, ADMM, Localization, Distributed Optimization
\end{keywords}

\section{Introduction}

\IEEEPARstart{T}{he} last few decades'  increasingly rapid technological developments have resulted in vast amounts of dispersed data. Optimization techniques have played a central role in transforming the vast data sets into usable information. However, due to the increasing size of the related optimization problems, it is essential that these optimization techniques scale with data size. 
 \BlueText{
 Fortunately, many large scale optimization problems in real world applications possess appealing structural properties, due to the networked nature of the problems.
 } 
 Thus, increasing research efforts have been devoted to the investigation of how these  structural properties can be exploited in the algorithm design to achieve scalability. 
 The focal point of these efforts has been on ``well-behaved" convex problems, rather than more challenging nonconvex problems.
  Nevertheless, large scale nonconvex problems arise in many real world network applications. 
  Examples of such nonconvex applications include matrix factorization techniques for recommender systems (the Netflix challenge)~\cite{Koren_2009}, localization in wireless sensor networks~\cite{Patwari_2005}, optimal power flow in smart grids~\cite{Frank_2012_1,Frank_2012_2},
 and LDPC decoding~\cite{Xishuo_2012}. 
 Interestingly, these large scale nonconvex applications tend to have the structural advantages that are commonly exploited to design scalable algorithms for their convex counterparts. 
This suggests that the algorithms used for large scale convex problems can potentially be applied to nonconvex problems as well.
  However, theoretical guarantees for these algorithms in the nonconvex regime have not yet been established.
\BlueText{
 This paper investigates convergence properties of a class of scalable and distributed algorithms for  \emph{nonconvex} structured optimization problems.
 Here, 
 (i) by \emph{distributed algorithms} we mean any algorithm that can be executed by at least two entities where no single entity has access to the full problem data, and 
 (ii) by \emph{structured optimization problems} we mean any problem with 
 structures in the problem data that can be exploited to achieve (i). 
}

\subsection{Related Literature} \label{Sec:relatedLiterature}

Many recent studies on large scale optimization have focused on distributed subgradient methods in the context of multi-agent networks~\cite{Nedic_2009,Nedic_2010,Zhu_2012,Duchi_2012,Jakovetic_2014,Shi_2014-Extra,Zhu_2013,Bianchi_2013}. There, multiple agents, each with a private objective function, cooperatively minimize the aggregate objective function by communicating over the network. 
 In contrast to~\cite{Nedic_2009,Nedic_2010,Zhu_2012,Duchi_2012,Jakovetic_2014,Shi_2014-Extra}, the papers~\cite{Zhu_2013} and \cite{Bianchi_2013} consider nonconvex multi-agent problems. 
 Specifically, \cite{Zhu_2013} applies distributed subgradient methods to the (convex) dual problem and investigates sufficient conditions under which the approach converges to a pair of optimal primal/dual variables. On the other hand, \cite{Bianchi_2013} studies the convergence of stochastic subgradient methods to a point satisfying the first order necessary conditions for local optimality with probability one. 
 \BlueText{
 A main drawback of these gradient based approaches is that they can only converge to an exact optimal (or local optimal) solution when a diminishing step size is used, which results in poor convergence rate. 
 The diminishing step size assumption is relaxed in the promising recent work~\cite{Shi_2014-Extra} while keeping the exact convergence by introducing a correction term, which significantly improves the convergence rate. 
}

Another widely used approach for structured convex optimization is the Alternating Direction Method of Multipliers (ADMM)~ \cite{Gabay1976,Eckstein1992,Boyd2011_ADMM}.  ADMM is a variant of the classical method of multipliers (MM)~\cite[Chapter 2]{constrainted_bertsekas} \cite[Chapter 4.2]{nonlinear_bertsekas}, where the primal variable update of the MM is split into subproblems, whenever the objective  is separable. This structure is common in large scale optimization problems that arise in practice~\cite{Boyd2011_ADMM}. Even problems that do not possess such a structure can often be posed equivalently in a form appropriate for ADMM by introducing auxiliary variables and linear constraints. These techniques have been employed in many recent works when designing distributed algorithms for convex, as well as nonconvex problems~~\cite{6684590,Mateos_2010,Schizas_2008,Boyd2011_ADMM,2013arXiv1307.8254W,Shi-Ling-Yuan-Wu-Yin-2014,Magnusson-Weeraddana-Fischione-2014,Iutzeler-2013-Explicit}. 
 A key property of ADMM compared with other existing scalable approaches, such as subgradient and dual descent methods (mentioned above) is its superior convergence behavior, see~\cite{Boyd2011_ADMM,Mateos_2010,Ghadimi-2013-Optimal} for empirical results. Characterizing the exact convergence rate of ADMM is still an ongoing research topic~\cite{Shi-Ling-Yuan-Wu-Yin-2014,Hong_2012,Iutzeler-2013-Explicit,Ghadimi-2013-Optimal}. Many recent papers have also numerically demonstrated the fast and appealing convergence behavior of ADMM even on nonconvex problems~\cite{Kanamori_2013,Dehua_2013,Liu_2012,Chartrand_2012,Magnusson-Weeraddana-Fischione-2014}. 
Despite these encouraging observations, there are still no theoretical guarantees for ADMM's  convergence in the nonconvex regime. 
Therefore, investigating convergence properties of the ADMM and related algorithms in nonconvex settings is of great importance in theory as well as in practice, and is motivated by the many emerging large scale nonconvex applications.

\subsection{Notation and Definitions}


 Vectors and matrices are represented by boldface lower and upper case letters, respectively.
 The set of real and natural numbers are denoted by $\R$ and $\N$, respectively. 
 The set of real $n$ vectors and $n{\times} m$ matrices are denoted by $\R^n$ and $\R^{n\times m}$, respectively.
 The $i$th component of the vector $\vec{x}$ is denoted by $\vec{x}_i$.
 The superscript $(\cdot)\tran$ stands for transpose.
 We use parentheses to construct vectors and matrices from comma separated lists as  $(\vec{x}_1,{\cdots},\vec{x}_n){=}[\vec{x}_1\tran, {\cdots}, \vec{x}_n\tran]\tran$ and $(\vec{A}_1,{\cdots},\vec{A}_n){=}[\vec{A}_1\tran, {\cdots}, \vec{A}_n\tran]\tran$, respectively. 
 $\diag(\vec{A}_1, {\cdots} , \vec{A}_n)$ denotes the  diagonal block matrix with $\vec{A}_1, {\cdots} , \vec{A}_n$ on the diagonal.
 $\vec{A}{\succ} 0$  ($\vec{A} {\succeq} 0$) indicates that the square matrix $\vec{A}$ positive (semi)definite.
 \BlueText{$||\cdot||$ denotes the $2$-norm.}
  We use the following definition.

    %

  \begin{defin}[\BlueText{\textbf{FON}}] \label{def:def_first_order_necessary_condition}
   Consider the optimization problem
\begin{equation} \label{eq:def_first_order_necessary_condition}
  \begin{aligned}
    & \underset{\vec{x}\in \R^p}{\text{ \emph{ minimize}}}
    & & f(\vec{x}) \\
    & \text{ \emph{ subject to }}
    & & \Bo\phi(\vec{x})=\vec{0}, \quad \Bo\psi(\vec{x})\leq \vec{0}
  \end{aligned}
\end{equation}
   where $\Bo\phi {:}\R^p \rightarrow \R^{q_1}$ and $\Bo\psi{:}\R^p \rightarrow \R^{q_2}$  are continuously differentiable functions.
    We say that $\vec{x}^{\star}\in \R^p$ and $(\Bo\lambda^{\star},\Bo\mu^{\star} ) \in \R^{q_1+q_2}$ satisfy the first order necessary \BlueText{(FON)}  conditions for problem~\eqref{eq:def_first_order_necessary_condition}, if following hold.
   1) Primal feasibility: $\Bo\phi(\vec{x}^{\star}){=}\vec{0}$ and $\Bo\psi(\vec{x}^{\star})\leq \vec{0}$.
   2) Dual feasibility:   $\Bo\mu^{\star} {\geq} \vec{0}$.
   3) Complementary slackness: $(\Bo\mu^{\star})_i\Bo\psi_i (\vec{x^{\star}}){=}0$, $i{=}1,\cdots q_2$.
   4) Lagrangian vanishes: $\nabla f(\vec{x}^{\star}) {=} \nabla \Bo\phi(\vec{x}^{\star}) \Bo\lambda^{\star} {+} \nabla \Bo\psi(\vec{x}^{\star}) \Bo\mu^{\star}$.
     We refer to $\vec{x}^{\star}$ and $(\Bo\lambda^{\star},\Bo\mu^{\star})$ as the primal and dual variables, respectively.
  \end{defin}

\section{Problem Statement, Related Background, and Contribution of the Paper} \label{sec:Problem_Statement_and_Background}

\BlueText{

 This section is organized as follows. 
 Section~\ref{sec:problemStatement} introduces the class of nonconvex structured problems we study.
 We give the necessary background on centralized algorithms in 
  Section~\ref{sec:Problem_Statement_and_Background_Augmented_Lagrangian_Methods},
  before introducing distributed algorithms which exploit the special structures of the related problems in Section~\ref{sec:Problem_Statement_and_Background_ADLM}.
 Then we state the contribution and organization of the paper in Section~\ref{sec:contributionOfThePaper}.


}

\subsection{Problem Statement} \label{sec:problemStatement}
 We consider the following optimization problem
\begin{equation} \label{eq:main_problem_formulation}
  \begin{aligned}
    & \underset{ \vec{x}\in \R^{p_1} ,\vec{z} \in \R^{p_2}}{\text{minimize}}
    & & f(\vec{x})+g(\vec{z}) \\
    & \text{subject to}
    & & \vec{x} \in \mathcal{X},~ \vec{z} \in \mathcal{Z} \\
    &&&  \vec{A}\vec{x}+\vec{B}\vec{z}=\vec{c},
  \end{aligned}
\end{equation}
 where $\vec{A}{\in}\R^{q\times p_1}$, $\vec{B}{\in} \R^{q\times p_2}$, and $\vec{c} {\in} \R^q$. 
  The use of the variable notation $\vec{x}$ and $\vec{z}$ is consistent with the literature~\cite{Boyd2011_ADMM}.
  The functions $f{:}\mathcal{X}{\rightarrow} \R$ and  $g{:}\mathcal{Z}{\rightarrow} \R$ are continuously differentiable on $\R^{p_1}$ and $\R^{p_2}$, respectively, and  may be \emph{nonconvex}. 
 We refer to the affine constraint $\vec{A}\vec{x}{+}\vec{B}\vec{z}{=}\vec{c}$ as \emph{the coupling constraint}. 
 We assume that Problem~\eqref{eq:main_problem_formulation} is feasible. 
 Problem~\eqref{eq:main_problem_formulation} is general in the sense that many interesting large scale problems, including consensus, and sharing~\cite[Section 7]{Boyd2011_ADMM}, among others can be equivalently posed in its form. 
\BlueText{
 Moreover, as noted in Section~\ref{Sec:relatedLiterature}, problem~\eqref{eq:main_problem_formulation} commonly appears in multi-agent networks, where $\vec{x}$ usually represents the private variable of each node/agent, $\vec{z}$ represents the coupling between the nodes, and the coupling constraint enforces the network consensus. }
 Therefore, our analytical results in subsequent sections apply to a broad class of problems of practical~importance.

\BlueText{
 Next we discus centralized solution methods  for Problem~\eqref{eq:main_problem_formulation} which are the basis for the distributed methods we study.

} 

\subsection{Penalty and Augmented Lagrangian Methods} \label{sec:Problem_Statement_and_Background_Augmented_Lagrangian_Methods}


  Nonconvex problems of the form~\eqref{eq:main_problem_formulation} can be gracefully handled by penalty and augmented Lagrangian methods, such as the quadratic penalty function method and method of multipliers, \cite[Chapter 2]{constrainted_bertsekas} \cite[Chapter 4.2]{nonlinear_bertsekas}.
  The main ingredient of these methods is the augmented Lagrangian, given by
\makeatletter%
\if@twocolumn%
  \begin{align*}
     L_{\rho}(\vec{x},\vec{z},\vec{y}) =& ~f(\vec{x})+g(\vec{z}) + \vec{y}\tran (\vec{A}\vec{x}+\vec{B}\vec{z}-\vec{c}) \\&+ (\rho/2) || \vec{A}\vec{x}+\vec{B}\vec{z}-\vec{c}||^2.
   \end{align*}
\else
\begin{equation*}
     L_{\rho}(\vec{x},\vec{z},\vec{y}) {=} f(\vec{x}){+}g(\vec{z}) {+} \vec{y}\tran (\vec{A}\vec{x}{+}\vec{B}\vec{z}{-}\vec{c}) {+} (\rho/2) || \vec{A}\vec{x}{+}\vec{B}\vec{z}{-}\vec{c}||^2.
\end{equation*}
\fi
\makeatother
 Here $\vec{x}$ and $\vec{z}$ are the primal variables of Problem~\eqref{eq:main_problem_formulation} and $\vec{y}\in \R^q$ and $\rho\in \R$ are referred to as the multiplier vector and the penalty parameter, respectively.

 The penalty and augmented Lagrangian methods consist in iteratively updating the variables $\vec{x}$, $\vec{z}$, $\vec{y}$, and $\rho$.
 An update common to all the methods is the primal variable update, i.e.,
\begin{equation} \label{eq:augmented_Lagrangian_formulation}
    (\vec{x}(t{+}1),\vec{z}(t{+}1)) = \underset{(\vec{x},\vec{z})\in \mathcal{X}\times \mathcal{Z}}{\text{argmin}}  L_{\rho(t)}(\vec{x},\vec{z},\vec{y}(t)),
\end{equation}
 where $t\in \N$ is the iteration index.
 The main difference between the two methods lies in the $\vec{y}$ and $\rho$ updates.
\BlueText{
 For example, in the case of the quadratic penalty method, the penalty parameter $\rho(t)$ is chosen such that $\lim_{t\rightarrow \infty} \rho(t)=\infty$ with the intention of enforcing the limit points of $\{ (\vec{x}(t),\vec{z}(t))\}_{t\in \N}$ to satisfy the coupling constraint.   
  It turns out that if the Lagrange multipliers are bounded, i.e., there exists $M\in \R$ such that $||\vec{y}(t)||<M$ for all $t\in \N$, %
 then every limit point of the sequence $\{ (\vec{x}(t),\vec{z}(t))\}_{t\in \N}$  is a global minimum of Problem~\eqref{eq:main_problem_formulation}~\cite[Proposition 2.1]{constrainted_bertsekas}.
}

\BlueText{

 The motive of the method of multipliers is to choose the sequence of multipliers $\{\vec{y}(t)\}_{t\in \N}$ intelligently to enable convergence to local or global optima of~\eqref{eq:main_problem_formulation} without needing $\lim_{t\rightarrow \infty} \rho(t){=}\infty$. 
 The well-known choice of $\{\vec{y}(t)\}_{t\in \N}$ in the method of multipliers follows 
  the recursion  
  \begin{equation} \vec{y}(t{+}1)=\vec{y}(t)+\rho(t)(\vec{x}(t{+}1)+\vec{z}(t{+}1)). \label{eq:Sec2B_dual-update} \end{equation}
  The motivation for~\eqref{eq:Sec2B_dual-update} is that when $(\vec{x}(t{+}1),\vec{z}(t{+}1))$ is locally optimal for Problem~\eqref{eq:augmented_Lagrangian_formulation} and satisfies the FON conditions (Definition~\ref{eq:def_first_order_necessary_condition})\footnote{We do not include the multipliers related to the constraint $\mathcal{X}{\times} \mathcal{Z}$ to simplify the presentation, but it is easily checked that the claim holds when they are included. } 
 then $(\vec{x}(t{+}1),\vec{z}(t{+}1))$ and $\vec{y}(t{+}1)$ satisfy conditions 2), 3), and 4) of the FON conditions for the original Problem~\eqref{eq:main_problem_formulation}, all except 1) primal feasibility. 
 Furthermore, under mild conditions, the method of multipliers converges to a local optimal point $(\vec{x}^{\star},\vec{z}^{\star})$ and to a corresponding optimal Lagrangian multiplier $\vec{y}^{\star}$~\cite[Proposition 2.4]{constrainted_bertsekas}.
 In addition to the local convergence, when $(\vec{x}(t),\vec{z}(t))$ is a global optima of~\eqref{eq:augmented_Lagrangian_formulation}, then~\eqref{eq:Sec2B_dual-update} is a gradient ascent step for the dual problem.
 However, due to non-zero duality gap in most nonconvex problems, the solution to~\eqref{eq:main_problem_formulation} can not be recovered from the dual problem. 
  Hence the method of multipliers can generally only be considered a local method.}



 In general, the penalty and augmented Lagrangian methods mentioned above are very reliable and effective for handling problems of the form~\eqref{eq:main_problem_formulation}.
 However, these methods entail centralized solvers, especially in the $(\vec{x},\vec{z})$-update~\eqref{eq:augmented_Lagrangian_formulation}, even if the objective function of problem~\eqref{eq:main_problem_formulation} has a desirable separable structure in $\vec{x}$ and $\vec{z}$.
 More specifically, these methods do not allow the possibility of performing the $(\vec{x},\vec{z})$-update in two steps: first $\vec{x}$-update and then $\vec{z}$-update.
 Otherwise, the assertions on the convergence of the algorithms do not hold anymore.
 Therefore, the penalty and augmented Lagrangian methods are not applicable in distributed settings, whenever the problems possess decomposition structures.
  Such restrictions have motivated an adaptation of the classical penalty and augmented Lagrangian methods that has excellent potential for a parallel/distributed implementation which we~discussed~now. 

\subsection{Alternating Direction Lagrangian Methods} \label{sec:Problem_Statement_and_Background_ADLM}

 Recall that problem~\eqref{eq:main_problem_formulation} has a linear coupling constraint and an objective function that is separable in $\vec{x}$ and $\vec{z}$.
 This motivates potential solution approaches to Problem~\eqref{eq:main_problem_formulation}, where the optimization in~\eqref{eq:augmented_Lagrangian_formulation} is performed in two steps, first in the $\vec{x}$ coordinate and then in the $\vec{z}$ coordinate, i.e.,
\begin{align}
      \vec{x}(t{+}1)&=\underset{\vec{x}\in \mathcal{X}}{\text{argmin}}~  L_{\rho(t)}(\vec{x},\vec{z}(t),\vec{y}(t)), \label{eq:ADLM-x-update}\\
      \vec{z}(t{+}1)&=\underset{\vec{z}\in \mathcal{Z}}{\text{argmin}}~L_{\rho(t)}(\vec{x}(t{+}1),\vec{z},\vec{y}(t)). \label{eq:ADLM-z-update}
\end{align}
 Let us refer to these approaches as \emph{Alternating Direction Lagrangian Methods} (ADLM).
 We consider two ADLM variants.
 The first variant is analogous to the quadratic penalty approach, where the sequence of penalty parameters $\{\rho(t)\}_{t\in \N}$ and the multiplier vectors $\{\vec{y}(t)\}_{t\in \N}$ are taken to be nondecreasing/divergent and bounded, respectively.
 \BlueText{We refer to this novel approach as the \emph{Alternating Direction Penalty Method} (ADPM).}
The second variant is the classic ADMM itself, the analog of the method of multipliers.
 We now pose the question: \emph{can the convergence of the considered ADLM variants, ADPM and ADMM, still be guaranteed when Problem~\eqref{eq:main_problem_formulation} is nonconvex?}


 \subsection{Contribution and Structure of the Paper} \label{sec:contributionOfThePaper}


\BlueText{
 We start by investigating the convergence behavior of the ADPM in Section~\ref{sec:ADPM} when Problem~\eqref{eq:main_problem_formulation} is nonconvex. 
 We consider a) an \emph{unconstrained} case in Section~\ref{sec:ADPM-analytic-props-unconstraint}, i.e., where $\mathcal{X}=\R^{p_1}$ and $\mathcal{Z}=\R^{p_2}$, and b) a \emph{constrained} case in Section~\ref{sec:ADPM-reaching-feasibility} where $\mathcal{X}$ and $\mathcal{Z}$ are compact sets.
 The analysis in case a) is based on assumptions on~\eqref{eq:main_problem_formulation} which highlight the situation when the $\vec{x}$- and $\vec{z}$- updates of ADLM are used to achieve distributed algorithms over networks and the coupling constraint expresses the network consensus.     
 Under these assumptions, we show that if $\vec{y}(t){=}\vec{0}$ and $\lim_{t\rightarrow }\rho(t)=\infty$, then the primal feasibility of~\eqref{eq:main_problem_formulation} is asymptotically achieved as ADPM proceeds. 
 In addition, if the sequence $1/\rho(t)$ is also non-summable and $(\vec{x}(t),\vec{z}(t))$ converge to $(\vec{x}^{\star},\vec{z}^{\star})$, then $(\vec{x}^{\star},\vec{z}^{\star})$ satisfies the FON conditions (Definition~\ref{eq:def_first_order_necessary_condition}) of~\eqref{eq:main_problem_formulation}. 
 In case b), we consider more general assumptions on~\eqref{eq:main_problem_formulation} and allow $\vec{y}(t)$ to be any bounded sequence.
 Under these assumptions, we show that if $\mathcal{X}$ and $\mathcal{Z}$ are convex and the sequence $1/\rho(t)$ is summable, then the primal feasibility of~\eqref{eq:main_problem_formulation} is asymptotically achieved as ADPM proceeds. 
 Moreover, we give an intuitive example showing why we need the sets $\mathcal{X}$ and $\mathcal{Z}$ to be convex in general.


 Next we investigate the convergence behavior of the ADMM  when~\eqref{eq:main_problem_formulation} is nonconvex in Section~\ref{sec:ADMM}. 
 We assume that the penalty parameter is fixed, i.e., $\rho(t)=\rho$. 
 We consider general assumptions on Problem~\eqref{eq:main_problem_formulation} where the sets $\mathcal{X}$ and $\mathcal{Z}$ can even be nonconvex. 
 We show that when $\vec{y}(t)$ converges then any limit point of $\vec{x}(t),\vec{z}(t))$ satisfies the FON conditions of Problem~\eqref{eq:main_problem_formulation}. 
 We note that the condition can be checked a posteriori or at runtime, by inspecting some algorithm parameters as the algorithm proceeds (online). 
 Moreover, we show how our results can be used to completely characterize the convergence of ADMM  for a class of problems, i.e., to determine to which point ADMM converges given an initialization. 
 In comparison to~\cite{Zhu_2013}, we consider ADMM, whereas therein  the standard Lagrangian dual function is  maximized.

 


 Finally, we illustrate how the considered methods can be applied to design distributed algorithms for cooperative localization in wireless sensor networks.} 



\section{Alternating Direction Penalty Method} \label{sec:ADPM}
  In this section we study convergence properties of the ADPM for addressing Problem~\eqref{eq:main_problem_formulation}.
  In Section~\ref{eq:ADPM-alg-descript} we give an explicit algorithm description and in Sections~\ref{sec:ADPM-analytic-props-unconstraint} and~\ref{sec:ADPM-reaching-feasibility} we investigate properties of the ADPM when $\mathcal{X}\times\mathcal{Z}=\R^{p_1}\times \R^{p_2}$ and when $\mathcal{X}\times\mathcal{Z} \subsetneqq \R^{p_1}\times \R^{p_2}$, respectively. 

{\tableANDalgSize

 \subsection{Algorithm Description} \label{eq:ADPM-alg-descript}
  The steps of ADPM are shown in Algorithm~1
\noindent\rule{\linewidth}{0.3mm}
\\
\emph{Algorithm 1: \ \textsc{The Alternating Direction Penalty Method (\small{ADPM}) }}

\vspace{-0.2cm}
\noindent\rule{\linewidth}{0.3mm}
\begin{enumerate}
   \item \label{Alg:ADPM-initialization-step}  \textbf{Initialization:} Set $t=0$ and initialize $\vec{z}(0)$, $\vec{y}(0)$, and $\rho(0)$.
   \item \label{Alg:ADPM-x-step} \textbf{\vec{x}-update:}  $\vec{x}(t+1){=}\underset{\vec{x}\in \mathcal{X}}{\text{argmin}}~  L_{\rho(t)}(\vec{x},\vec{z}(t),\vec{y}(t))$.
   \item \label{Alg:ADPM-z-step} \textbf{\vec{z}-update:} $\vec{z}(t+1){=}\underset{\vec{z}\in \mathcal{Z}}{\text{argmin}}~L_{\rho(t)}(\vec{x}(t+1),\vec{z},\vec{y}(t))$.
   \item \textbf{$\rho/\vec{y}$-update:} Update $\rho(t{+}1)$ and $\vec{y}(t{+}1)$.
   \item \label{Alg:ADPM-stop} \textbf{Stopping criterion:} If stopping criterion is met terminate, otherwise set $t=t+1$ and go to step 2.
\end{enumerate}
\vspace{-3mm}
\rule{\linewidth}{0.3mm}

}


%


\noindent
 The algorithm parameters $\rho(t)$ and $\vec{y}(t)$ are chosen such that 
 $\lim_{t\rightarrow \infty} \rho(t){=}\infty$ and the sequence $\{\vec{y}(t)\}_{t\in\N}$ is taken to be bounded.
 The $\vec{x}$- and $\vec{z}$- updates (steps \ref{Alg:ADPM-x-step} and \ref{Alg:ADPM-z-step}) are the main steps of the algorithm where the augmented Lagrangian is minimized in two steps.

  Nonconvexities of $f$ and $g$ suggest potential difficulties in the implementation of the $\vec{x}$- and $\vec{z}$- updates (see steps 2 and 3).
  However, it is worth noting that problems encountered in practice often contain structure that can be exploited to successfully implement the $\vec{x}$- and $\vec{z}$- updates.
  Several examples are given next.
 \begin{example} \label{example:subproblem-become-strongly-convex}

    Let $\mathcal{X}$ (or $\mathcal{Z}$) be convex, let $f$ (or $g$) be twice continuously differentiable, and suppose there exits $M\in \R$ such that $\nabla^2 f(\vec{x})>\alpha$ for all $\vec{x}\in \mathcal{X}$.
    Moreover, suppose $\vec{A}$ (or $\vec{B}$) has full column rank.
    Then the optimization problem in the $\vec{x}$-update (or $\vec{z}$-update) is strongly convex for sufficiently large $\rho(t)>-\alpha/\lambda_{\min}(\vec{A}\tran \vec{A})$.
  This can be seen by looking at the Hessian $\nabla_{\vec{x}}^2 L_{\rho(t)} (\vec{x},\vec{z}(t),\vec{y}(t))$ and using that $\vec{A}\tran \vec{A}$ is positive definite.
 \end{example}
 \begin{example}\label{example:subproblem-become-strongly-convex-2}
    Let $f(\vec{x})=\vec{x}\tran \vec{Q}\vec{x}+\vec{q}\tran \vec{x}$ where $\vec{Q} \in \R^{p_1\times p_1}$ is a symmetric indefinite  matrix.
    Then if $\vec{x}\tran \vec{Q} \vec{x}>0$ for all $\vec{x} \in \R^{p_1} \setminus  \{ \vec{0}\}$ in the null space of $\vec{A}$, then there exists $\bar{\rho}\in \R$ such that $L_{\rho(t)} (\cdot,\vec{z}(t),\vec{y}(t))$ is convex in $\vec{x}$ for all $\rho(t)\geq \bar{\rho}$, see\cite[Lemma 3.2.1 and Figure 3.2.1]{nonlinear_bertsekas}.
 \end{example}
 \begin{example} \label{example:subproblem-become-strongly-convex-3}
   A potential feature of the multi-agent setting is that the $\vec{x}$-  update is separable into low dimensional problems.
  More specifically, suppose the variable $\vec{x}$ is partitioned into low dimensional subvectors as $\vec{x}=( \vec{x}_1, \cdots , \vec{x}_N)$, where there is no coupling between $\vec{x}_i$ and $\vec{x}_j$ in the constraints, for all $i,j=1,\cdots,N$ such that $i\neq j$.
 Suppose also that the objective function is separable with respect to the partition, i.e., $f(\vec{x})=\sum_{i=1}^{N} f_i(\vec{x}_i)$.
  Then the objective function in the $\vec{x}$-update is also separable with respect to the partition.
  Thus, provided that each subvector $\vec{x}_i$ is of low dimension, global methods such as branch and bound can be efficiently used to optimally solve the optimization problem in the $\vec{x}$-update.
 \end{example}

\BlueText{
\subsection{ Algorithm Properties: Unconstrained Case} \label{sec:ADPM-analytic-props-unconstraint}

 In this section, we derive the convergence properties of the ADPM algorithm when $\mathcal{X}=\R^n$ and $\mathcal{Z}=\R^m$.
 Our convergence results assert that i) primal feasibility of problem~\eqref{eq:main_problem_formulation} is satisfied and ii) if the sequence $1/\rho(t)$ is non-summable and $(\vec{x}(t),\vec{z}(t))$ converges to a point $(\vec{x}^{\star},\vec{z}^{\star})$, then $(\vec{x}^{\star},\vec{z}^{\star})$ satisfies the FON conditions (Definition~\ref{eq:def_first_order_necessary_condition}) of Problem~\eqref{eq:main_problem_formulation}. 
 To establish this result precisely, let us first make the following assumptions.
\begin{assumption} \label{assumption:ADPM-unCon-1}
  $g(\vec{x}){=}0$, $\vec{A}{=}\vec{I}$,  $\vec{c}{=}\vec{0}$, $\vec{B}$ has full column rank. 
\end{assumption}
\begin{assumption} \label{assumption:ADPM-unCon-2}
 At least one of the following conditions holds true:
  \begin{enumerate}[a.]
    \item $f$ is continuously differentiable with bounded gradient, i.e., there exists $\kappa\in \R$ such that $|| \nabla f(\vec{x}) || {\leq} \kappa$ for all $\vec{x} \in \R^n$.
    \item 
      $||\vec{B}||_{\infty} {\leq} 1$ and  $||(\vec{B}\tran \vec{B})^{-1}\vec{B}\tran||_{\infty} {\leq} 1$. 
  Moreover, there  exist a scalar $c{>}0$ such that: (b.i) $[\nabla f(\vec{x})]_i < 0$ if $\vec{x}_i< {-}c$, for component $i \in \{ 1, {\cdots}, p_1\}$ and (b.ii) $[\nabla f(\vec{x})]_i > 0$ if $\vec{x}_i>c$, for $i \in \{1,\cdots, p_1\}$.   
  \end{enumerate}
\end{assumption}

 Assumption~\ref{assumption:ADPM-unCon-1} naturally arises when designing distributed algorithm over networks, where $\vec{x}$ represents private variables of each node/agent and $\vec{z}$ represents the coupling between the nodes.
   Assumption~\ref{assumption:ADPM-unCon-2}.a is standard in the literature, e.g., in relation to (sub)gradient methods methods~\cite{nonlinear_bertsekas,Nedic_2009,Jakovetic_2014}. 
 In addition, Assumption~\ref{assumption:ADPM-unCon-2}.b ensures that our results hold for more general classes of practical problems than covered by Assumption~\ref{assumption:ADPM-unCon-2}.a, e.g., when $f$ is a polynomial of even degree with positive leading coefficient (see Problem~\eqref{eq:distributed_localization_problem} in Section~\ref{sec:general applications}). 
 We note that the  $||\vec{B}||_{\infty} {\leq} 1$ and  $||(\vec{B}\tran \vec{B})^{-1}\vec{B}\tran||_{\infty} {\leq} 1$ naturally hold when
  $\vec{x}$ and $\vec{z}$ represent private and coupling variables of each node/agent in a connected network, e.g. see Section~\ref{sec:general applications}.
 The main implication of Assumption~\ref{assumption:ADPM-unCon-2}.b is that it ensures that the sequence $(\vec{x}(t),\vec{z}(t))$ is bounded as we show in the following lemma.
\begin{lemma}\label{sec3ADPM:Lemma1}
 Suppose Assumption~\ref{assumption:ADPM-unCon-2}.b holds true and $||\vec{z}(t)||_{\infty}\leq c$, then $||\vec{x}(t{+}1)||_{\infty}\leq c$ and $||\vec{z}(t{+}1)||_{\infty}{\leq} c $.
\end{lemma}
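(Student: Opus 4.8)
The plan is to read off, componentwise, the first-order stationarity conditions of the two updates and combine them with the sign conditions (b.i)--(b.ii) and the two matrix-norm bounds of Assumption~\ref{assumption:ADPM-unCon-2}.b. Throughout this subsection $\vec{y}(t){=}\vec{0}$, and under Assumption~\ref{assumption:ADPM-unCon-1} we have $g\equiv 0$, $\vec{A}{=}\vec{I}$, $\vec{c}{=}\vec{0}$, so the augmented Lagrangian reduces to $L_{\rho(t)}(\vec{x},\vec{z},\vec{0})=f(\vec{x})+(\rho(t)/2)||\vec{x}+\vec{B}\vec{z}||^2$. Since $\mathcal{X}{=}\R^{p_1}$, any minimizer $\vec{x}(t{+}1)$ of the $\vec{x}$-update is stationary, i.e.
\begin{equation*}
\nabla f(\vec{x}(t{+}1)) + \rho(t)\big(\vec{x}(t{+}1) + \vec{B}\vec{z}(t)\big) = \vec{0}.
\end{equation*}
I would first record the elementary bound $||\vec{B}\vec{z}(t)||_{\infty}\leq ||\vec{B}||_{\infty}\,||\vec{z}(t)||_{\infty}\leq c$, which uses $||\vec{B}||_{\infty}\leq 1$ and the hypothesis $||\vec{z}(t)||_{\infty}\leq c$; in particular $-c\leq [\vec{B}\vec{z}(t)]_i \leq c$ for every component $i$.

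The crux is the bound $||\vec{x}(t{+}1)||_{\infty}\leq c$, which I would obtain componentwise by contradiction. Suppose $[\vec{x}(t{+}1)]_i > c$ for some $i$. Then (b.ii) gives $[\nabla f(\vec{x}(t{+}1))]_i > 0$, while $[\vec{x}(t{+}1)]_i + [\vec{B}\vec{z}(t)]_i > c + (-c) = 0$; since $\rho(t)>0$, the $i$-th component of the stationarity equation is then strictly positive, contradicting that it vanishes. The symmetric case $[\vec{x}(t{+}1)]_i < -c$ uses (b.i) together with $[\vec{B}\vec{z}(t)]_i \leq c$ to make the $i$-th component strictly negative, again a contradiction. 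Hence $|[\vec{x}(t{+}1)]_i|\leq c$ for all $i$, i.e. $||\vec{x}(t{+}1)||_{\infty}\leq c$.

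For the $\vec{z}$-update, the objective is $f(\vec{x}(t{+}1))+(\rho(t)/2)||\vec{x}(t{+}1)+\vec{B}\vec{z}||^2$, a strictly convex quadratic in $\vec{z}$ because $\vec{B}$ has full column rank (Assumption~\ref{assumption:ADPM-unCon-1}), so $\vec{B}\tran\vec{B}\succ 0$; its unique minimizer is the least-squares solution $\vec{z}(t{+}1)=-(\vec{B}\tran\vec{B})^{-1}\vec{B}\tran\vec{x}(t{+}1)$. Applying the second matrix-norm bound of Assumption~\ref{assumption:ADPM-unCon-2}.b then yields $||\vec{z}(t{+}1)||_{\infty}\leq ||(\vec{B}\tran\vec{B})^{-1}\vec{B}\tran||_{\infty}\,||\vec{x}(t{+}1)||_{\infty}\leq c$, completing the proof. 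I expect the $\vec{z}$-step to be routine; the main obstacle is the $\vec{x}$-step, where one must be careful that the conclusion holds for \emph{any} minimizer (stationarity is only a necessary condition, and the $\vec{x}$-subproblem need not be convex) and that the induced $\infty$-norm bound on $\vec{B}\vec{z}(t)$ is applied with the correct sign in each of the two cases.
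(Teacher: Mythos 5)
Your proof is correct and follows essentially the same route as the paper's: a componentwise contradiction for the $\vec{x}$-update using the sign conditions (b.i)--(b.ii) together with $||\vec{B}\vec{z}(t)||_{\infty}\leq c$, and then the explicit least-squares solution of the $\vec{z}$-update combined with $||(\vec{B}\tran\vec{B})^{-1}\vec{B}\tran||_{\infty}\leq 1$. (Your sign on the least-squares solution, $\vec{z}(t{+}1)=-(\vec{B}\tran\vec{B})^{-1}\vec{B}\tran\vec{x}(t{+}1)$, is the consistent one given the coupling $\vec{x}+\vec{B}\vec{z}=\vec{0}$; the paper omits the minus sign, which is immaterial for the norm bound.)
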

\begin{proof}
  Let us start by showing that $||\vec{x}(t{+}1)||_{\infty}\leq c$ by using contradiction. 
  Without loss of generality, we assume that $\vec{x}_i(t{+}1)<-c$ for some $i=1,\cdots, p_1$  (the other cases follow symmetrical arguments). 
  Then $[\nabla f(\vec{x}(t))]_i<0$, from Assumption~\ref{assumption:ADPM-unCon-2}.b, which in turn implies that 
 \begin{equation} ||(1/\rho) \nabla f(\vec{x}(t{+}1))+\vec{x}(t{+}1)||_{\infty}>c. \label{sec3a:lemma-10} \end{equation} 
  However, using the  FON conditions of the $\vec{x}$-update and  that $||\vec{B}||_{\infty}\leq1$ we also have
 \begin{align} 
   ||(1/\rho) \nabla f(\vec{x}(t{+}1)){+}\vec{x}(t{+}1)||_{\infty} &{=}||\vec{B} \vec{z}(t) ||_{\infty} \leq c \label{sec3a:lemma-1a}
 \end{align}
  Clearly, \eqref{sec3a:lemma-10} and~\eqref{sec3a:lemma-1a} contradict each other.
  Hence, $||\vec{x}(t{+}1)||_{\infty}{\leq} c$.


 
 Let us next show that $||\vec{z}(t{+}1)|| \leq c$.
  From the FON conditions of the $\vec{z}$-update we get that
   $\vec{z}(t{+}1)=(\vec{B}\tran \vec{B})^{- 1}\vec{B}\tran \vec{x}(t{+}1)$, which together with $||(\vec{B}\tran \vec{B})^{-1} \vec{B}\tran ||_{\infty}\leq 1$ ensures that $||\vec{z}(t{+}1)||\leq || \vec{x}(t{+}1)||_{\infty} \leq c$. 
\end{proof}

\noindent  We are now ready to derive the main result of this subsection. 

\begin{prop} \label{prop:ADPM-unCon-feas}

   Suppose assumptions~\ref{assumption:ADPM-unCon-1} and~\ref{assumption:ADPM-unCon-2} hold. 
   Let $r(t)$ be the residual at iteration $t$ of the ADPM defined as $r(t)=||\vec{x}(t)+\vec{B}\vec{z}(t)||$. 
   Then
 \begin{enumerate}[i)]
    \item  If $\vec{y}(t)=\vec{0}$ for all $t\in \N$, then $\lim_{t\rightarrow \infty} r(t)= \infty$.
    \item  If in addition $\sum_{t=0} 1/\rho(t)=\infty$ and $\lim_{t\rightarrow \infty} (\vec{x}(t),\vec{z}(t))=  (\vec{x}^{\star},\vec{z}^{\star})$, then $(\vec{x}^{\star},\vec{z}^{\star})$ satisfies the FON conditions of Problem~\eqref{eq:main_problem_formulation}.
 \end{enumerate}
\end{prop}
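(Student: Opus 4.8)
The plan is to reduce each inner minimization to its stationarity condition and read off the residual in closed form. Under Assumption~\ref{assumption:ADPM-unCon-1} with $\vec{y}(t)=\vec{0}$ the augmented Lagrangian collapses to $L_{\rho(t)}(\vec{x},\vec{z},\vec{0})=f(\vec{x})+(\rho(t)/2)\|\vec{x}+\vec{B}\vec{z}\|^2$, so steps~\ref{Alg:ADPM-x-step}--\ref{Alg:ADPM-z-step} are smooth unconstrained minimizations whose first order conditions are
\[
\nabla f(\vec{x}(t{+}1))+\rho(t)(\vec{x}(t{+}1)+\vec{B}\vec{z}(t))=\vec{0},
\qquad
\vec{B}\tran(\vec{x}(t{+}1)+\vec{B}\vec{z}(t{+}1))=\vec{0}.
\]
Let $\vec{P}=\vec{B}(\vec{B}\tran\vec{B})^{-1}\vec{B}\tran$ be the orthogonal projector onto $\range\vec{B}$, which is well defined because $\vec{B}$ has full column rank. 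The $\vec{z}$-condition gives $\vec{B}\vec{z}(t{+}1)=-\vec{P}\vec{x}(t{+}1)$, hence $\vec{x}(t{+}1)+\vec{B}\vec{z}(t{+}1)=(\vec{I}-\vec{P})\vec{x}(t{+}1)$ and $r(t{+}1)=\|(\vec{I}-\vec{P})\vec{x}(t{+}1)\|$. I should flag at the outset that the displayed limit $\lim_{t\to\infty}r(t)=\infty$ in part~(i) cannot hold: it contradicts the announced conclusion of Section~\ref{sec:ADPM-analytic-props-unconstraint} that primal feasibility is achieved, and it would render part~(ii) vacuous, since a finite limit $(\vec{x}^\star,\vec{z}^\star)$ forces the finite value $r(t)\to\|\vec{x}^\star+\vec{B}\vec{z}^\star\|$. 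The intended and provable statement is $\lim_{t\to\infty}r(t)=0$, which is what I would establish.

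For part~(i), apply $(\vec{I}-\vec{P})$ to the $\vec{x}$-condition. Because $(\vec{I}-\vec{P})\vec{B}\vec{z}(t)=\vec{0}$, this isolates the residual as $(\vec{I}-\vec{P})\vec{x}(t{+}1)=-(1/\rho(t))(\vec{I}-\vec{P})\nabla f(\vec{x}(t{+}1))$, whence $r(t{+}1)\le(1/\rho(t))\|\nabla f(\vec{x}(t{+}1))\|$. Under Assumption~\ref{assumption:ADPM-unCon-2}.a the gradient is bounded by $\kappa$, so $r(t{+}1)\le\kappa/\rho(t)$. Under Assumption~\ref{assumption:ADPM-unCon-2}.b, Lemma~\ref{sec3ADPM:Lemma1} confines $\{(\vec{x}(t),\vec{z}(t))\}$ to a compact box (via the induction in that lemma from an initialization inside it), on which the continuous $\nabla f$ is bounded, giving the same estimate. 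In either case $\rho(t)\to\infty$ yields $r(t)\to0$.

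For part~(ii) I would exhibit a dual certificate satisfying Definition~\ref{def:def_first_order_necessary_condition}. Passing to the limit in part~(i) along $(\vec{x}(t),\vec{z}(t))\to(\vec{x}^\star,\vec{z}^\star)$ gives primal feasibility $\vec{x}^\star+\vec{B}\vec{z}^\star=\vec{0}$; there are no inequality constraints, so dual feasibility and complementary slackness are vacuous, and I set the equality multiplier to $\Bo\lambda^\star=\nabla f(\vec{x}^\star)$. The Lagrangian-vanishing condition then splits into an $\vec{x}$-block holding by the definition of $\Bo\lambda^\star$ and a $\vec{z}$-block requiring $\vec{B}\tran\nabla f(\vec{x}^\star)=\vec{0}$. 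To obtain the latter I would project the $\vec{x}$-condition onto $\range\vec{B}$: substituting $\vec{B}\vec{z}(t)=-\vec{P}\vec{x}(t)$ (valid for $t\ge1$) and applying $\vec{P}$ yields the range-space recursion $\vec{P}\vec{x}(t{+}1)-\vec{P}\vec{x}(t)=-(1/\rho(t))\vec{P}\nabla f(\vec{x}(t{+}1))$. Telescoping from $t=1$ and using that $\vec{P}\vec{x}(t)\to\vec{x}^\star$ converges shows $\sum_t(1/\rho(t))\vec{P}\nabla f(\vec{x}(t{+}1))$ converges; since its terms converge in direction to $\vec{P}\nabla f(\vec{x}^\star)$ while $\sum_t 1/\rho(t)=\infty$, the limiting direction must be $\vec{0}$, and full column rank of $\vec{B}$ upgrades $\vec{P}\nabla f(\vec{x}^\star)=\vec{0}$ to $\vec{B}\tran\nabla f(\vec{x}^\star)=\vec{0}$.

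The delicate step is this last deduction: concluding $\vec{P}\nabla f(\vec{x}^\star)=\vec{0}$ from convergence of a vector series whose weights $1/\rho(t)$ are non-summable. The argument I would use is that if $\vec{P}\nabla f(\vec{x}^\star)\neq\vec{0}$, then for large $t$ each summand has inner product at least $\tfrac12\|\vec{P}\nabla f(\vec{x}^\star)\|$ with the unit vector $\vec{P}\nabla f(\vec{x}^\star)/\|\vec{P}\nabla f(\vec{x}^\star)\|$; the partial sums projected onto that direction then grow like $\sum_t 1/\rho(t)=\infty$, contradicting convergence. This tension between non-summability and convergence is exactly where the hypothesis $\sum_t 1/\rho(t)=\infty$ is consumed, and it is the crux of the proof.
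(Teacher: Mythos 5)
Your proof is correct and follows essentially the same route as the paper's: derive the stationarity conditions of the two subproblems, eliminate $\vec{z}$ through the normal equations (your projector $\vec{P}=\vec{B}(\vec{B}\tran\vec{B})^{-1}\vec{B}\tran$ is just a compact repackaging of the paper's \eqref{eq:ADPM-UCi-kkt-c}--\eqref{eq:ADPM-UCi-kkt-d}), bound the residual by $M/\rho(t)$ using boundedness of $\nabla f(\vec{x}(t))$ (Assumption~\ref{assumption:ADPM-unCon-2}.a directly, Assumption~\ref{assumption:ADPM-unCon-2}.b via Lemma~\ref{sec3ADPM:Lemma1}), and in part (ii) telescope a range-space series and extract a contradiction from $\sum_t 1/\rho(t)=\infty$. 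Two points where you depart from the paper are worth recording. First, your flag on the statement is vindicated: the paper's own proof of part (i) concludes that \eqref{eq:ADPM-UCi-kkt-h} ``converges to zero,'' so the displayed $\lim_{t\to\infty}r(t)=\infty$ is a typo for $\lim_{t\to\infty}r(t)=0$, consistent with the section's announced claim of asymptotic primal feasibility and with part (ii) being non-vacuous. Second, at the crux of part (ii) your argument is actually tighter than the paper's: the paper asserts that convergence of the telescoped vector series \eqref{eq:ADPM-UCii-telescope} ``in turn ensures'' convergence of the series of norms \eqref{eq:ADPM-UCii-telescope-after}, which is not a valid implication in general (a vector series can converge conditionally without its norms being summable); you instead project the convergent vector series onto the fixed unit direction $\vec{P}\nabla f(\vec{x}^{\star})/||\vec{P}\nabla f(\vec{x}^{\star})||$ and use $\nabla f(\vec{x}(t))\to\nabla f(\vec{x}^{\star})$ to make the partial sums in that direction diverge, reaching the same contradiction without the unjustified step. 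The paper's argument is repairable in exactly this way, since the terms eventually align in direction, so this is a patch rather than a different theorem, but your version is the one that stands as written. A minor shared caveat: both you and the paper need $||\vec{z}(0)||_{\infty}\leq c$ to start the induction in Lemma~\ref{sec3ADPM:Lemma1} under Assumption~\ref{assumption:ADPM-unCon-2}.b; you acknowledge this explicitly, the paper leaves it implicit.
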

 \begin{proof}
 i) Note that Assumption~\ref{assumption:ADPM-unCon-2} implies that the sequence $\nabla f(\vec{x}(t))$ is bounded, when~\ref{assumption:ADPM-unCon-2}.a holds then the result is obvious and when~\ref{assumption:ADPM-unCon-2}.b holds the result follows from  Lemma~\ref{sec3ADPM:Lemma1}.
  In particular, there exists $M\in \R$ such that $||\nabla f(\vec{x}(t))||<M$ for all $t\in \N$.

  Using the FON conditions of the $\vec{x}$- and $\vec{y}$- updates we get
 \begin{align}
    \vec{0} &=   \nabla f(\vec{x}(t{+}1))+\rho(t)(\vec{x}(t{+}1)+\vec{B}\vec{z}(t)), \label{eq:ADPM-UCi-kkt-a} \\
    \vec{0} &=     \vec{B}\tran (\vec{x}(t{+}1)-\vec{B}\vec{z}(t{+}1)),  \label{eq:ADPM-UCi-kkt-b}
 \end{align}
 and rearranging~\eqref{eq:ADPM-UCi-kkt-a} and~\eqref{eq:ADPM-UCi-kkt-b}, we obtain
 \begin{align}
    \vec{z}(t) &{=}  (\vec{B}\tran \vec{B})^{-1} \vec{B}\tran \left( \vec{x}(t{+}1) {+} \frac{1}{\rho(t)} \nabla f(\vec{x}(t{+}1))\right), \label{eq:ADPM-UCi-kkt-c} \\
    \vec{z}(t{+}1) &{=}    (\vec{B}\tran \vec{B})^{-1}   \vec{B}\tran \vec{x}(t{+}1). \label{eq:ADPM-UCi-kkt-d}
 \end{align}  
  Using~\eqref{eq:ADPM-UCi-kkt-c},~\eqref{eq:ADPM-UCi-kkt-d}, and that $\nabla f(\vec{x}(t))$ is bounded  we get 
  \begin{align}
      ||\vec{z}(t{+}1){-}\vec{z}(t)|| {=}& \frac{1}{\rho(t)} \left|\left|   (\vec{B}\tran \vec{B})^{-1}   \vec{B}\tran   \nabla f(\vec{x}(t{+}1))) \right|\right| \label{eq:ADPM-UCi-kkt-e} \\
          {\leq}&   \frac{M}{\rho(t)} \left|\left|   (\vec{B}\tran \vec{B})^{-1}   \vec{B}\tran   \right|\right|.   \label{eq:ADPM-UCi-kkt-f}
  \end{align}
 Similarly, using~\eqref{eq:ADPM-UCi-kkt-a} and that $\nabla f(\vec{x}(t))$ is bounded  we get  
  \begin{align}
      ||\vec{x}(t{+}1)+\vec{B}\vec{z}(t)|| &=  \frac{1}{\rho(t)} ||\nabla f(\vec{x}(t{+}1))|| \leq \frac{M}{\rho(t)}. \label{eq:ADPM-UCi-kkt-g}
  \end{align}
  Finally, using~\eqref{eq:ADPM-UCi-kkt-f},~\eqref{eq:ADPM-UCi-kkt-g} and the triangle inequality gives
  \begin{align}
 \hspace{-0.2cm}     ||\vec{x}(t{+}1){+}\vec{B}\vec{z}(t{+}1)|| &{\leq} ||\vec{x}(t{+}1){+}\vec{B}\vec{z}(t)|| {+} || \vec{B}(\vec{z}(t{+}1) {-}\vec{z}(t))|| \nonumber \\
       &{\leq}    \frac{M}{\rho(t)}\Big(1 +  \left|\left| \vec{B}   (\vec{B}\tran \vec{B})^{-1}   \vec{B}\tran   \right|\right| \Big) . \label{eq:ADPM-UCi-kkt-h}
  \end{align}
   Since $ \rho(t)$ diverges to $\infty$,~\eqref{eq:ADPM-UCi-kkt-h} converges to zero, which concludes the proof.

 ii)  We need to show that $(\vec{x}^{\star},\vec{z}^{\star})$ satisfies the FON conditions (Definition~\ref{def:def_first_order_necessary_condition}) for Problem~\eqref{eq:main_problem_formulation} together with some Lagrangian multiplier. Note that condition 1) of the FON conditions  (Primal feasibility) holds because of part i) of this proposition  and conditions 2) and 3) of the FON conditions
   (dual feasibility and complementary slackness) trivially hold since there are no inequality constraints, since $\mathcal{X}=\R^{p_1}$ and $\mathcal{Z}=\R^{p_2}$. 
 Hence we only need to show condition 4) that the Lagrangian vanishes.
 We note that the gradient of the Lagrangian is 
 \begin{align}
   \nabla f(\vec{x}^{\star}) + \Blambda = \vec{0} ~~ \text{ and } ~~
   \vec{B}\tran \Blambda = \vec{0}, \label{eq:ADPM-UC-MP-ii-kkt}
 \end{align}
  where $\Blambda \in \R^n$ is the dual variable.
  If $\nabla f(\vec{x}^{\star})$ is in the null space of $\vec{B}\tran$ then~\eqref{eq:ADPM-UC-MP-ii-kkt} is satisfied by setting $\Blambda=-\nabla f(\vec{x}^{\star})$, which would conclude the proof.
 Therefore, in the sequel, we show that $\vec{B}\tran \nabla f(\vec{x}^{\star}){=}\vec{0}$.

 Using~\eqref{eq:ADPM-UCi-kkt-c} and~\eqref{eq:ADPM-UCi-kkt-d} gives
 \begin{align} \label{eq:ADPM-UCii-before-telescope}
    \sum_{t=0}^{\infty} (\vec{B}\tran \vec{B}) (\vec{z}(t{+}1)-\vec{z}(t)) {=}   \sum_{t=0}^{\infty} \frac{1}{\rho(t)} \vec{B}\tran \nabla f(\vec{x}(t{+}1)).
 \end{align}
 The left hand side of~\eqref{eq:ADPM-UCii-before-telescope} is a telescopic series, hence 
 \begin{align} \label{eq:ADPM-UCii-telescope}
      \sum_{t=0}^{\infty} \frac{1}{\rho(t)} \vec{B}\tran \nabla f(\vec{x}(t{+}1))=  (\vec{B}\tran \vec{B})(\vec{z}^{\star}-\vec{z}(0)),
 \end{align}
 which in turn ensures the convergence of both~\eqref{eq:ADPM-UCii-telescope} and 
 \begin{align} \label{eq:ADPM-UCii-telescope-after}
  \sum_{t=0}^{\infty} \frac{1}{\rho(t)} \left|\left| \vec{B}\tran \nabla f(\vec{x}(t{+}1)) \right|\right|.
 \end{align}
 Set $L=\lim_{t\rightarrow \infty} || B\tran\nabla f(\vec{x}(t))||= || B\tran \nabla f(\vec{x}^{\star})||$. 
  Let us next use contraction to show that $L=0$, which in turn shows that $B\tran\nabla f(\vec{x}^{\star})=\vec{0}$.
  Without of loss of generality, suppose $L>0$.
  Choose $\epsilon>0$ and $T\in \N$ such that $|| B\tran\nabla f(\vec{x}(t))||> L-\epsilon>0$ for all $t\geq T$.
 Then
\begin{align*}
       \sum_{t=0}^{\infty} \frac{1}{\rho(t)} \left|\left| \vec{B}\tran \nabla f(\vec{x}(t{+}1)) \right|\right| \geq&  
      \sum_{t=0}^{T-1} \frac{1}{\rho(t)} \left|\left| \vec{B}\tran \nabla f(\vec{x}(t{+}1)) \right|\right|  \\
           &+ (L-\epsilon) \sum_{t=T}^{\infty} \frac{1}{\rho(t)},
\end{align*}
 where  the right hand side diverges to $\infty$, since $\sum_{t=0}^{\infty} 1/\rho(t)=\infty$, which implies that the left hand side also diverges to $\infty$. This contradicts that the series~\eqref{eq:ADPM-UCii-telescope-after} converges and therefore we can conclude that $L=0$.
 \end{proof}

 \begin{remark}
  In Proposition~\ref{prop:ADPM-unCon-feas} we considered the case where $\vec{y}(t){=}\vec{0}$, which allowed us to derive the theoretical results. 
  Still, our numerical results in Section~\ref{sec:general applications} show that it can be beneficial to update $\vec{y}$ according to the recursion $\vec{y}(t{+}1){=}\vec{y}(t){+}\rho(\vec{x}(t{+}1){-}\vec{B}\vec{z}(t{+}1))$.
 \end{remark}

}

 \subsection{ Algorithm properties: Constrained Case} \label{sec:ADPM-reaching-feasibility}

In this section, we derive the convergence properties of the ADPM when $\mathcal{X}$ and $\mathcal{Z}$ are proper subsets of $\R^{p_1}$ and $\R^{p_2}$, respectively.
 Our convergence results assert that the primal feasibility of problem~\eqref{eq:main_problem_formulation}, which is a necessary optimality condition, is achieved as ADPM proceeds.
 More specifically, we show that regardless of whether $f,g$ are convex or nonconvex, whenever $\mathcal{X}$ and $\mathcal{Z}$ are convex, the primal residual at iteration $t$ of the ADPM (i.e., $\vec{A}\vec{x}(t)+\vec{B}\vec{z}(t)-\vec{c}$) converges to zero  as ADPM proceeds.
 To establish this result precisely, let us first make the following assumptions:
\begin{assumption} \label{assumption:ADPM-1}
  The functions $f$ and $g$ of problem~\eqref{eq:main_problem_formulation} are continuously differentiable.
\end{assumption}
\begin{assumption} \label{assumption:ADPM-2}
  The sets $\mathcal{X}$ and $\mathcal{Z}$ of problem~\eqref{eq:main_problem_formulation} are convex and compact.
\end{assumption}
\begin{assumption} \label{assumption:ADPM-3}
  Slater's condition~\cite{nonlinear_bertsekas} holds \emph{individually} for $\mathcal{X}$ and $\mathcal{Z}$. In particular, there exists a $\vec{x}\in\mathcal{X}$ (respectively, $\vec{z}\in\mathcal{Z}$) such that all the inequality constraints characterizing $\mathcal{X}$ (respectively, $\mathcal{Z}$) are inactive at $\vec{x}$ (respectively, $\vec{z}$).
\end{assumption}
\begin{assumption} \label{assumption:ADPM-4}
    The matrices ${\vec A}$ and $\vec B$ of problem~\eqref{eq:main_problem_formulation} have full column rank.
\end{assumption}
Note that we make no convexity assumptions on $f$ and $g$. However, the convexity assumption on $\mathcal{X}$ and $\mathcal{Z}$ is \emph{essential}.
 Otherwise, primal feasibility is not guaranteed in general, see Example~\ref{example:ADPM-fails} later in this section. Assumption~\ref{assumption:ADPM-3} is an additional technical
condition, similar to the constraint qualifications usually used in convex analysis. The last assumption is technically necessary to ensure that both ${\vec A}\tran {\vec A}$ and ${\vec B}\tran {\vec B}$ are positive definite. 
It is quite common in practice that this assumption holds, as desired, see Section~\ref{sec:general applications}. The following proposition establishes the convergence of ADPM:

\begin{prop} \label{prop:ADPM-feas}

   Suppose assumptions~\ref{assumption:ADPM-1}-\ref{assumption:ADPM-4} hold. 
 Let $\{\rho(t)\}_{t\in\N}$ 
 be a sequence of penalty parameters used in the ADPM algorithm, where $\rho(t+1)\geq \rho(t)$ for all $t$ and suppose there exists an integer $\kappa>0$ and a scalar $\Delta>1$ such that $\rho(t+\kappa) \geq \Delta \rho(t)$ for all $t$.
  Let $r(t)$ be the residual at iteration $t$ of the ADPM defined as $r(t)=||\vec{A}\vec{x}(t)+\vec{B}\vec{z}(t)-\vec{c}||$. Then $\lim_{t\rightarrow \infty} r(t)=0$.
\end{prop}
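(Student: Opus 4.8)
The plan is to recast the feasibility claim as an \emph{inexact alternating minimization} of the squared coupling residual, which is a jointly convex quadratic, and then to use the full-column-rank hypothesis to force the per-block increments to vanish. Throughout, write $\vec{r}(t)=\vec{A}\vec{x}(t)+\vec{B}\vec{z}(t)-\vec{c}$ and set $P(\vec{x},\vec{z})=\tfrac{1}{2}\|\vec{A}\vec{x}+\vec{B}\vec{z}-\vec{c}\|^2$, so that $r(t)^2=2P(\vec{x}(t),\vec{z}(t))$. The function $P$ is jointly convex, with $\nabla^2_{\vec{x}}P=\vec{A}\tran\vec{A}\succ 0$ and $\nabla^2_{\vec{z}}P=\vec{B}\tran\vec{B}\succ 0$ by Assumption~\ref{assumption:ADPM-4}; denote $\sigma_A=\lambda_{\min}(\vec{A}\tran\vec{A})>0$ and $\sigma_B=\lambda_{\min}(\vec{B}\tran\vec{B})>0$. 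Since $\mathcal{X},\mathcal{Z}$ are compact (Assumption~\ref{assumption:ADPM-2}) and $f,g$ are continuously differentiable (Assumption~\ref{assumption:ADPM-1}), the iterates, the gradients $\nabla f(\vec{x}(t))$, $\nabla g(\vec{z}(t))$, the bounded multipliers $\vec{y}(t)$, and hence the residuals, are all uniformly bounded.

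First I would record the first-order (variational-inequality) optimality conditions of the two updates. Because $\mathcal{X}$ is convex, the minimizer $\vec{x}(t+1)$ of the $\vec{x}$-subproblem satisfies $\langle \nabla f(\vec{x}(t+1))+\vec{A}\tran\vec{y}(t)+\rho(t)\vec{A}\tran\tilde{\vec{r}}(t+1),\,\vec{x}-\vec{x}(t+1)\rangle\ge 0$ for all $\vec{x}\in\mathcal{X}$, where $\tilde{\vec{r}}(t+1)=\vec{A}\vec{x}(t+1)+\vec{B}\vec{z}(t)-\vec{c}$; this is genuinely necessary despite the nonconvexity of $f$, since $\mathcal{X}$ is convex. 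Dividing by $\rho(t)$ and using boundedness gives $\langle \nabla_{\vec{x}}P(\vec{x}(t+1),\vec{z}(t)),\,\vec{x}-\vec{x}(t+1)\rangle\ge -C/\rho(t)$ for all $\vec{x}\in\mathcal{X}$, with a uniform constant $C$ (Slater's condition, Assumption~\ref{assumption:ADPM-3}, can be invoked here if one prefers the equivalent KKT form with bounded multipliers), and the analogous inequality holds for $\vec{z}(t+1)$ and $P(\vec{x}(t+1),\cdot)$. Combining each inequality (tested at the previous block iterate) with the per-block strong convexity of $P$ yields the descent estimate
\[
 p(t+1)+\tfrac{\sigma_A}{2}\|\vec{x}(t+1)-\vec{x}(t)\|^2+\tfrac{\sigma_B}{2}\|\vec{z}(t+1)-\vec{z}(t)\|^2\le p(t)+\tfrac{C'}{\rho(t)},
\]
where $p(t)=P(\vec{x}(t),\vec{z}(t))$. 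Since $\rho(t+\kappa)\ge\Delta\rho(t)$ with $\Delta>1$ forces $\sum_t 1/\rho(t)<\infty$, the shifted sequence $p(t)+C'\sum_{s\ge t}1/\rho(s)$ is nonincreasing and bounded below, so $p(t)$ converges; summing the estimate shows $\sum_t\big(\|\vec{x}(t+1)-\vec{x}(t)\|^2+\|\vec{z}(t+1)-\vec{z}(t)\|^2\big)<\infty$, hence both block increments tend to zero.

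Finally I would identify the limit. Choose a subsequence with $(\vec{x}(t_k),\vec{z}(t_k))\to(\vec{x}^\star,\vec{z}^\star)\in\mathcal{X}\times\mathcal{Z}$ (compactness); the vanishing increments give $\vec{x}(t_k+1)\to\vec{x}^\star$ and $\vec{z}(t_k+1)\to\vec{z}^\star$. Passing to the limit in the two variational inequalities (their right-hand sides $-C/\rho(t_k)\to 0$) yields $\langle\nabla_{\vec{x}}P(\vec{x}^\star,\vec{z}^\star),\vec{x}-\vec{x}^\star\rangle\ge 0$ on $\mathcal{X}$ and $\langle\nabla_{\vec{z}}P(\vec{x}^\star,\vec{z}^\star),\vec{z}-\vec{z}^\star\rangle\ge 0$ on $\mathcal{Z}$; adding them gives the joint first-order optimality condition of the convex program $\min_{\mathcal{X}\times\mathcal{Z}}P$, so $(\vec{x}^\star,\vec{z}^\star)$ is a \emph{global} minimizer of $P$. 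As Problem~\eqref{eq:main_problem_formulation} is feasible we have $\min P=0$, whence $p(t_k)\to P(\vec{x}^\star,\vec{z}^\star)=0$; since $p(t)$ converges, $\lim_t p(t)=0$, i.e.\ $r(t)=\sqrt{2p(t)}\to 0$.

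I expect the main obstacle to be precisely this last step. A one-shot comparison only shows that $p(t)$ stays bounded and may even increase, because the $\vec{x}$-update is optimal against the stale $\vec{z}(t)$ rather than $\vec{z}(t+1)$, so the alternating structure obstructs a direct descent to zero. The two decisive ingredients that overcome this are (i) Assumption~\ref{assumption:ADPM-4}, whose induced per-block strong convexity converts the summable drift $C'/\rho(t)$ into summable increments and thus vanishing block steps, allowing a clean limit to be taken; and (ii) the elementary fact that a coordinatewise minimum of a jointly convex differentiable function over a product of convex sets is a global minimum. A secondary point to verify carefully is that the summability $\sum_t 1/\rho(t)<\infty$ (equivalently the geometric growth $\rho(t+\kappa)\ge\Delta\rho(t)$) is what is actually needed, and that the convexity of $\mathcal{X},\mathcal{Z}$ is essential for the variational inequalities to be valid necessary conditions for the nonconvex block subproblems.
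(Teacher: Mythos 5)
Your proof is correct, but it follows a genuinely different route from the paper's, and the comparison is instructive. The paper argues by \emph{value comparison}: using global optimality of each block update, it bounds the augmented Lagrangian at the iterates against the residual-minimizing points $\hat{\vec{x}}(\vec{z}(t))$ and $\hat{\vec{z}}(\vec{x}(t+1))$ of \eqref{eq:x-hat}--\eqref{eq:z-hat}, obtains the approximate-monotonicity recursion $r(t+1)\le r(t)+8M_1/\rho(t)$ (in squared-residual terms), invokes a separate convergence lemma for almost-decreasing bounded sequences (Lemma~\ref{lem:lem1}), and then identifies the limit via the squeeze lemma, continuity of $\hat{\vec{x}},\hat{\vec{z}}$, and Slater's condition (Assumption~\ref{assumption:ADPM-3}) to pass from coordinatewise FON conditions to global optimality of the convex problem~\eqref{eq:APDM-proof-convex-problem}. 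You instead work with the variational-inequality form of the first-order conditions of the two subproblems, combine them with the exact quadratic expansion of $P$ and the strong-convexity constants $\lambda_{\min}(\vec{A}\tran\vec{A}),\lambda_{\min}(\vec{B}\tran\vec{B})>0$ supplied by Assumption~\ref{assumption:ADPM-4}, and obtain a perturbed descent inequality whose error $C'/\rho(t)$ is summable by the geometric growth of $\rho$. This buys three things the paper's argument does not give: (i) only stationarity (local optimality) of the block updates is needed, whereas the paper's comparison step $L_{\rho(t)}(\vec{x}(t+1),\vec{z}(t),\vec{y}(t))\le L_{\rho(t)}(\hat{\vec{x}}(\vec{z}(t)),\vec{z}(t),\vec{y}(t))$ requires global minimizers; (ii) Assumption~\ref{assumption:ADPM-3} becomes unnecessary, since the variational inequality over a convex set involves no multipliers and hence no constraint qualification; (iii) you additionally establish square-summability of the increments $\|\vec{x}(t+1)-\vec{x}(t)\|$ and $\|\vec{z}(t+1)-\vec{z}(t)\|$, hence vanishing block steps, a fact the paper never proves (it gets convergence of the residual directly, without control of the iterates' movement). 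What the two proofs share is the decisive final ingredient, which you correctly isolated: a blockwise first-order optimal point of the jointly convex residual function over the product of convex sets $\mathcal{X}\times\mathcal{Z}$ is a global minimizer of it, and feasibility of~\eqref{eq:main_problem_formulation} forces that minimum to be zero; the convexity of $\mathcal{X}$ and $\mathcal{Z}$ enters exactly where you said it does, in making the variational inequalities valid necessary conditions for the nonconvex subproblems. Two details to make explicit if you write this up: positivity $\rho(0)>0$ is needed for $\sum_t 1/\rho(t)<\infty$ (implicit in the paper as well), and the recursion should start at $t=1$ since $\vec{x}(0)$ is not defined by the algorithm; neither affects the asymptotic claim.
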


\begin{IEEEproof} Recall that $\{\vec{y}(t)\}_{t\in\N}$ is a bounded sequence. Thus, there exists $M_0>0$ such that $||\vec{y}(t)||\leq M_0$, for all $t\in \N$. We denote by $\mathcal{Y}$ the closed ball with radius $M_0$ centered at the origin $\vec{0}$, i.e., $\mathcal{Y}=\{ \vec{y} \in \R^q \big| ||\vec{y}|| \leq M_0\}$.



    Since $f$ and $g$ are continuous and the sets $\mathcal{X}$ and $\mathcal{Z}$ are compact there exists a scalar $M_1>0$ such that
   \begin{equation}\label{eq:M1}
       M_1 {=} \underset{(\vec{x},\vec{z},\vec{y}) {\in} \mathcal{X}{\times} \mathcal{Z}{\times} \mathcal{Y}}{\text{max}} |f(\vec{x}){+}g(\vec{z}){+}\vec{y}\tran ( \vec{A}\vec{x}{+}\vec{B}\vec{z}{-}\vec{c})|.
   \end{equation}
  In addition, $\hat{\vec{x}}:\R^{p_2} \rightarrow \R$ and $\hat{\vec{z}}:\R^{p_1} \rightarrow \R$, defined as
  \begin{align}
    \hat{\vec{x}}(\vec{z})= \underset{\vec{x} \in \mathcal{X}}{\text{argmin}} ||\vec{A}\vec{x}+\vec{B}\vec{z}-\vec{c}||^2, \label{eq:x-hat}\\
    \hat{\vec{z}}(\vec{x})= \underset{\vec{z} \in \mathcal{Z}}{\text{argmin}} ||\vec{A}\vec{x}+\vec{B}\vec{z}-\vec{c}||^2, \label{eq:z-hat}
  \end{align}
  are well-defined \emph{continuous functions} [compare with Assumption~\ref{assumption:ADPM-4}].
   By definition, $\vec{x}(t+1)$ is a solution of the optimization problem in $\vec{x}$-update of the ADPM.
   This, together with \eqref{eq:M1} yields 
\makeatletter%
\if@twocolumn%
   \begin{multline} \label{eq:ADPM-feas-proof-eq1}
     L_{\rho(t)}(\vec{x}(t+1),\vec{z}(t),\vec{y}(t)) \leq \\  M_1 + (\rho(t)/2)||\vec{A}\hec{x}(\vec{z}(t))+\vec{B}\vec{z}(t)-\vec{c}||^2  .
   \end{multline}
\else
   \begin{equation} \label{eq:ADPM-feas-proof-eq1}
     L_{\rho(t)}(\vec{x}(t+1),\vec{z}(t),\vec{y}(t)) \leq M_1 + (\rho(t)/2)||\vec{A}\hec{x}(\vec{z}(t))+\vec{B}\vec{z}(t)-\vec{c}||^2 .
   \end{equation}
\fi
\makeatother 
 
\noindent  Similarly, we get 
\makeatletter%
\if@twocolumn%
  \begin{multline} \label{eq:ADPM-feas-proof-eq2}
      L_{\rho(t)}(\vec{x}(t+1),\vec{z}(t+1),\vec{y}(t)) \leq  \\ \hspace{2.7mm} M_1 {+} (\rho(t)/2)||\vec{A}\vec{x}(t+1){+}\vec{B}\hec{z}(\vec{x}(t+1)){-}\vec{c}||^2.
  \end{multline}
\else
  \begin{equation} \label{eq:ADPM-feas-proof-eq2}
      L_{\rho(t)}(\vec{x}(t+1),\vec{z}(t+1),\vec{y}(t) \leq M_1 {+} (\rho(t)/2)||\vec{A}\vec{x}(t+1){+}\vec{B}\hec{z}(\vec{x}(t+1))-\vec{c}||^2.
  \end{equation}
\fi
\makeatother

 Let us first use \eqref{eq:ADPM-feas-proof-eq1} and \eqref{eq:ADPM-feas-proof-eq2} to derive a recursive relation for $r(t)$. By rearranging the terms of~\eqref{eq:ADPM-feas-proof-eq1} and by using that $|M_1-(f(\vec{x})+g(\vec{z})+ \vec{y}\tran ( \vec{A}\vec{x}{+}\vec{B}\vec{z}{-}\vec{c}) )|\leq 2M_1$ for all $(\vec{x},\vec{z},\vec{y}){\in} \mathcal{X} {\times} \mathcal{Z} {\times} \mathcal{Y}$, we have for all $t\in\N$,
 \begin{multline}
     ||\vec{A}\vec{x}(t+1)+\vec{B}\vec{z}(t)-\vec{c}||^2 \\ \leq \frac{4M_1}{\rho(t)} + ||\vec{A}\hec{x}(\vec{z}(t))+\vec{B}\vec{z}(t)-\vec{c}||^2 .\label{eq:ADPM_main_feasiblity_prop_ineq1}
\end{multline}
Moreover, we have for all $t\in\N$,
   \begin{align}
   \hspace{-2mm} r(t+1)
    & \leq \frac{4M_1}{\rho(t)} {+} ||\vec{A}\vec{x}(t{+}1){+}\vec{B}\hec{z}(\vec{x}(t{+}1)){-}\vec{c}||^2  \label{eq:ADPM_main_feasiblity_prop_ineq2}\\
    &  \leq \frac{8M_1}{\rho(t)} {+} ||\vec{A}\hec{x}(\vec{z}(t)){+}\vec{B}\vec{z}(t){-}\vec{c}||^2  \label{eq:ADPM_main_feasiblity_prop_ineq3}\\
    & \leq \frac{8M_1}{\rho(t)} {+} r(t)  \label{eq:ADPM-feas-proof-eq3} ,
\end{align}
 where \eqref{eq:ADPM_main_feasiblity_prop_ineq2} follows similarly by rearranging the terms of~\eqref{eq:ADPM-feas-proof-eq2} and by using that $|M_1-(f(\vec{x})+g(\vec{z})+ \vec{y}\tran ( \vec{A}\vec{x}{+}\vec{B}\vec{z}{-}\vec{c}) )|\leq 2M_1$ for all  $(\vec{x},\vec{z},\vec{y}){\in} \mathcal{X} {\times} \mathcal{Z} {\times} \mathcal{Y}$, \eqref{eq:ADPM_main_feasiblity_prop_ineq3} follows from combining the inequalities~\eqref{eq:ADPM_main_feasiblity_prop_ineq1} and~\eqref{eq:ADPM_main_feasiblity_prop_ineq2}, together with the definition of $\hec{x}$ and $\hec{z}$, and \eqref{eq:ADPM-feas-proof-eq3} follows by the definition of $\hec{x}$.

  Let us next use the recursive inequality~\eqref{eq:ADPM-feas-proof-eq3} above to show that $\{r(t)\}_{t\in\N}$ \emph{converges} to a \emph{finite} value. The  inequality \eqref{eq:ADPM-feas-proof-eq3} implies for all $t,n\geq 0$,
  \begin{equation}  \label{eq:rho0}
    r(t+n)  \leq  r(t) +  8M_1  \sum_{i=0}^{n-1} \frac{1}{\rho(t+i)} .
 \end{equation}
 From the definition of $\{\rho(t)\}_{t\in N}$, we get
 \begin{align}
     \sum_{i=0}^n \frac{1}{\rho(t+i) }&\leq \sum_{i=1}^{\lceil n/\kappa \rceil} \sum_{j=0}^{\kappa-1} \frac{1}{ \rho(t+i\kappa+j) }\label{eq:rho1}\\
     &\leq \sum_{i=0}^{\lceil n/\kappa \rceil} \frac{\kappa}{ \Delta^i \rho(t)} \label{eq:rho2}\\
     &\leq  \frac{\kappa}{\rho(t)} \sum_{i=0}^{\infty} \frac{1}{ \Delta^i }\label{eq:rho3},
 \end{align}
 where \eqref{eq:rho1} follows because the sum on the right contains all the terms of the sum on the left (and possibly more) and all the terms are positive, \eqref{eq:rho2} follows because  $1/\rho(t+i\kappa+j)\leq 1/ ( \Delta^i \rho(t))$ for all $0\leq j\leq \kappa-1$, and \eqref{eq:rho3} trivially follows from the nonnegativity of summands.
 Since $\Delta>1$, $\sum_{i=0}^{\infty} 1/ \Delta^i$ is a convergent geometric series, and thus let $\sum_{i=0}^{\infty} \kappa/ \Delta^i =M_2$. This, together with \eqref{eq:rho0}-\eqref{eq:rho3} implies that for all integers $t,n\geq 0$,
 \begin{equation}\label{eq:residual-convergence}
       r(t+n)  \leq r(t) +  \frac{8 M_1  M_2}{\rho(t)}  .
 \end{equation}
Now note that $\{r(t)\}_{t\in \N}$ is bounded.
 Moreover, because $\{\rho(t)\}_{t\in \N}$ is an increasing sequence, it follows that for all $\epsilon>0$, there exists a $T$ such that $(8M_1M_2/\rho(t))\leq \epsilon$, for all $t\geq T$. These, taken together with \eqref{eq:residual-convergence} and Lemma~\ref{lem:lem1} (see p.~\pageref{pp:lem1}), ensure that the sequence $\{r(t)\}_{t\in \N}$ \emph{converges} to a \emph{finite} value, denoted by $R$, i.e., $R=\lim_{t\rightarrow \infty} r(t)$.

Let us finally show that $R=0$.
  Since the set $\mathcal{X}\times \mathcal{Z}$ is compact, the sequence $\{\vec{x}(t),\vec{z}(t)\}_{t\in\N}$ has a limit point, say $(\bec{x},\bec{z})\in\mathcal{X}\times \mathcal{Z}$.
  Moreover, note that the function $||\vec A\vec x + \vec B\vec z -\vec c||^2$ is continuous on $\mathcal{X}\times \mathcal{Z}$.
  Therefore, taking limits as $t\rightarrow\infty$ in $r(t)=||\vec{A}\vec{x}(t)+\vec{B}\vec{z}(t)-\vec{c}||^2$, we have
  \be\label{eq:bar-xlimit}
  R{=}\lim_{t\rightarrow \infty} || \vec{A}\vec{x}(t){+}\vec{B}\vec{z}(t){-}\vec{c}||^2{=}||\vec{A}\bec{x}{+}\vec{B}\bec{z}{-}\vec{c}||^2 .
  \ee
 Let us now consider the limits in the inequality \eqref{eq:ADPM-feas-proof-eq3} as $t\rightarrow\infty$.
 Since $\lim_{t\rightarrow \infty} r(t+1) = \lim_{t\rightarrow \infty} ( (8M_1)/\rho(t) + r(t))=R$, from \eqref{eq:ADPM_main_feasiblity_prop_ineq2}, \eqref{eq:ADPM_main_feasiblity_prop_ineq3}, and the squeezing lemma, together with the continuity of functions $\hat{\vec x}$ and $\hat{\vec z}$ it follows that
\be\label{eq:x-hat-limit}
R= ||\vec{A}\hec{x}(\bec{z})+\vec{B}\bec{z}-\vec{c}||^2= ||\vec{A}\bec{x}+\vec{B}\hec{z}(\bec{x})-\vec{c}||^2.
\ee
 By combining \eqref{eq:bar-xlimit} and \eqref{eq:x-hat-limit}, together with the definitions \eqref{eq:x-hat} and \eqref{eq:z-hat}, we get
  \begin{align}
    \bec{x}= \underset{\vec{x} \in \mathcal{X}}{\text{argmin}} ||\vec{A}\vec{x}+\vec{B}\bec{z}-\vec{c}||^2,  \label{eq:ADPM-proof-KKT_x}\\
    \bec{z}= \underset{\vec{z} \in \mathcal{Z}}{\text{argmin}} ||\vec{A}\bec{x}+\vec{B}\vec{z}-\vec{c}||^2.  \label{eq:ADPM-proof-KKT_z}
  \end{align}
  Since Slater's constraint qualifications condition is satisfied for both sets $\mathcal{X}$ and $\mathcal{Z}$ (Assumption~\ref{assumption:ADPM-3}), $\bec{x}$ and $\bec{z}$ satisfy the first order necessary conditions
 for problems~\eqref{eq:ADPM-proof-KKT_x} and~\eqref{eq:ADPM-proof-KKT_z}, respectively.
  By combining these first order necessary conditions and~\eqref{eq:ADPM-proof-KKT_z}, it follows that $(\bec{x},\bec{z})$ satisfies the first order necessary conditions for the problem
  \begin{equation} \label{eq:APDM-proof-convex-problem}
    \begin{aligned}
      & \underset{\vec{x},\vec{z}}{\text{minimize}}
      & &||\vec{A}\vec{x}+\vec{B}\vec{z}-\vec{c}||^2\\
      & \text{subject to}
     & & (\vec{x},\vec{z}) \in \mathcal{X} \times \mathcal{Z}.
    \end{aligned}
  \end{equation}
  Since problem~\eqref{eq:APDM-proof-convex-problem} is convex and the constraint sets satisfy Slater's constraint qualifications condition, we conclude that $(\bec{x},\bec{z})$ is the solution to problem~\eqref{eq:APDM-proof-convex-problem}.
  Given that problem~\eqref{eq:main_problem_formulation} is feasible, we must have $||\vec{A}\bec{x}+\vec{B}\bec{z}-\vec{c}||^2=0$, and therefore $\lim_{t\rightarrow \infty} || \vec{A}\vec{x}(t)+\vec{B}\vec{z}(t)-\vec{c}||^2=0$ [compare with \eqref{eq:bar-xlimit}].
%
\end{IEEEproof}
\begin{lemma}\label{lem:lem1}
    \label{pp:lem1} Let us suppose that $\{a_t\}_{t\in \N}$ is a bounded sequence and for each $\epsilon>0$ there exists $T\in \N$ such that $a_{t+n}\leq \epsilon +a_t$ for all $n\geq 0$ and $t\geq T$, then $\lim_{t\rightarrow \infty} a_t$ exists.
  \end{lemma}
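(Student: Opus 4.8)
The plan is to prove convergence by showing that the limit superior and limit inferior of $\{a_t\}$ coincide. Since $\{a_t\}_{t\in\N}$ is bounded, both $L := \limsup_{t\to\infty} a_t$ and $\ell := \liminf_{t\to\infty} a_t$ are finite and satisfy $\ell \leq L$; it therefore suffices to establish the reverse inequality $L \leq \ell$, since this forces $\ell = L$ and hence guarantees the existence of $\lim_{t\to\infty} a_t$.

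To this end, I would fix an arbitrary $\epsilon > 0$ and let $T \in \N$ be the index supplied by the hypothesis, so that $a_{t+n} \leq \epsilon + a_t$ for all $t \geq T$ and all $n \geq 0$. The key observation is to read this inequality with $t \geq T$ held fixed while $n$ ranges over $\N$: the right-hand side is then a constant in $n$, whereas the left-hand side runs over the tail $\{a_{t+n}\}_{n\geq 0}$, whose limit superior in $n$ is exactly $L$. Taking $\limsup_{n\to\infty}$ of both sides thus yields $L \leq \epsilon + a_t$, valid for every fixed $t \geq T$.

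I would then rearrange this as $a_t \geq L - \epsilon$ for all $t \geq T$ and take $\liminf_{t\to\infty}$, which gives $\ell \geq L - \epsilon$. Since $\epsilon > 0$ was arbitrary, letting $\epsilon \downarrow 0$ produces $\ell \geq L$, and combined with $\ell \leq L$ this completes the argument.

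The proof is short, and the only delicate point is the bookkeeping of the two indices: one must freeze $t$ before passing to the limit in $n$ (so that $a_t$ is a genuine constant when taking $\limsup_n$) and only afterwards pass to the limit in $t$; conflating the two orders would break the estimate. As an equivalent route worth noting, one may instead work with the non-increasing tail-supremum $b_t := \sup_{s \geq t} a_s$, which converges to $L$ by boundedness; the hypothesis then sandwiches $a_t$ between $b_t - \epsilon$ and $b_t$ for all large $t$, again yielding $\lim_{t\to\infty} a_t = L$.
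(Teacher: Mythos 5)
Your proof is correct, and it reaches the same destination as the paper's proof --- showing $\limsup_{t\to\infty} a_t \leq \liminf_{t\to\infty} a_t$ --- but by a genuinely different mechanism. The paper anchors on the $\liminf$ side: it extracts a subsequence $\{a_{t_j}\}$ converging to $R = \liminf_{t\to\infty} a_t$, picks one index $t_J$ at which $a_{t_J}$ is within $\epsilon/2$ of $R$, and then uses the hypothesis to propagate an \emph{upper} bound forward, obtaining $a_t < R + \epsilon$ for all $t \geq t_J$ and hence $\limsup_{t\to\infty} a_t \leq R$. You instead anchor on the $\limsup$ side: holding $t \geq T$ fixed and taking $\limsup_{n\to\infty}$ in the hypothesis inequality (using that the $\limsup$ of a tail equals the $\limsup$ of the whole sequence), you get the \emph{lower} bound $a_t \geq L - \epsilon$ for every $t \geq T$, and then take $\liminf_{t\to\infty}$. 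The payoff of your route is that it needs no subsequence extraction and no $\epsilon/2$ bookkeeping --- just two limit passages in the correct order, which you rightly flag as the one delicate point. The paper's route is more constructive in flavor (it exhibits the specific index $t_J$ past which the sequence is trapped near $R$), which mirrors how the lemma is actually invoked in the convergence proof of Proposition~2, but either argument serves.
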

  \begin{IEEEproof}
         Let us denote by $R$ the limit inferior of $\{a_t\}_{t\in \N}$, i.e., $R=\liminf_{t\rightarrow \infty}a_t$, which is finite since $\{a_t\}_{t\in \N}$ is bounded.
          	It follows from elementary properties of the limit inferior that $\{a_t\}_{t\in \N}$ has a subsequence $\{a_{t_j}\}_{j\in \N}$ which converges to $R$ 
         , i.e., $\lim_{j\rightarrow \infty}a_{t_j} =R$.
         Subsequently, for a given $\epsilon>0$ we can find $J_1\in \N$ such that $|R-a_{t_j}|< \epsilon/2$ for all $j\geq J_1$.
         Moreover, by using the assumptions of the lemma, there exists $J_2\in \N$  such that $a_{t_j+n} \leq  \epsilon/2+ a_{t_j}$ for all $n\geq 0$ and $j\geq J_2$.
         If we choose $J=\max \{ J_1,J_2\}$ we get that $a_t \leq  \epsilon/2+ a_{t_J} < R + \epsilon$ for all $t\geq t_J$.
         Since this can be done for all $\epsilon>0$ we get that $\limsup_{t\rightarrow \infty} a_t \leq R$, implying that $\limsup_{t\rightarrow \infty} a_t=\liminf_{t\rightarrow \infty}a_t$.
       So we can conclude that $\lim_{t\rightarrow \infty} a_t = R$.
  \end{IEEEproof}

 One natural question that arises immediately with the Assumption~\ref{assumption:ADPM-2} is what if $\mathcal{X}$ and/or $\mathcal{Z}$ are \emph{nonconvex}. 
 The following example shows that the results of Proposition~\ref{prop:ADPM-feas} do generally not hold when either $\mathcal{X}$ or $\mathcal{Z}$ are nonconvex.
 \begin{example} \label{example:ADPM-fails}
  Consider the problem \label{page:PrimalInfeasibility}
\begin{equation} \label{eq:ADPM-fails-to-reach-feasible-point}
  \begin{aligned}
    & \underset{ x, z}{\mbox{ \emph{minimize}}}
    & & x^2+z^2 \\
    & \text{\emph{subject to}}
    & &  -2x+ z= 0.1, \\
    &&& x \in [-1,0]\cup [1,2], \hspace{0.2cm}
     z\in[0,3]
  \end{aligned}
\end{equation}
  The feasibility set and contours of the objective function are given in Fig.~\ref{fig:ADPM-fails}.
  It can be observed that if $z(0)=0$ and $\vec{y}(t)=\vec{0}$ for all $t\in \N$, then the optimal solution of both the $\vec{x}$- and $\vec{z}$- updates is $0$ for all $t\in\N$, i.e., $\lim_{t\rightarrow \infty} {x}(t)=0$ and $\lim_{t\rightarrow \infty} {z}(t)=0$. This means that the algorithm converges to $(0,0)$, which is an infeasible point.
 \end{example}
\begin{figure}[t]
\centering
{\includegraphics[width=0.30\textwidth]{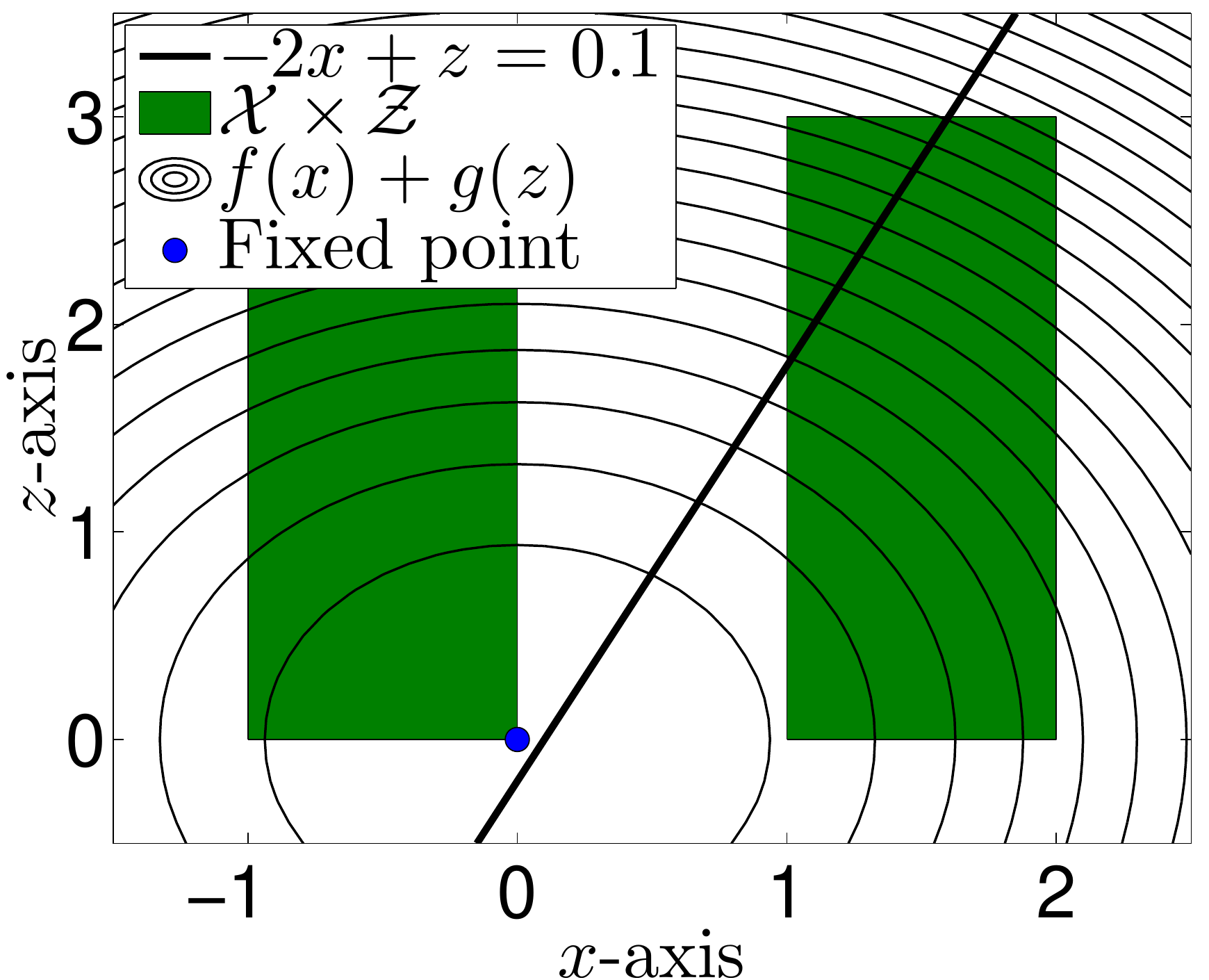}
}
\caption{Example where ADPM fails to converge to a feasible point when sets $\mathcal{X}$ and $\mathcal{Z}$ are nonconvex.}
\label{fig:ADPM-fails}
\vspace{-4mm}
\end{figure}
Note that our Assumption~\ref{assumption:ADPM-1} is a weaker condition than assuming that  $f$ and/or $g$ are convex. 
 As a result, characterizing generally the proprieties of the objective value of ADPM after the convergence is technically challenging.
  Nevertheless, ADPM appears to resemble a sequential optimization approach, which provides degrees of freedom to hover over the true objective function for locating a \emph{good} objective value.
 In~\cite{Magnusson_2014_2}, we give some experiments to numerically show these appealing aspects of the ADPM, besides those ensured by Proposition~\ref{prop:ADPM-feas}.

\section{Alternating Direction Method of Multipliers} \label{sec:ADMM}
  In this section we investigate some new general properties of the ADMM in a nonconvex setting.
  We state the algorithm in section~\ref{sec:ADMM-alg-descript} and study convergence properties in section~\ref{sec:ADMM-analytic-properties}. 

 \subsection{Algorithm Description} \label{sec:ADMM-alg-descript}
  The ADMM can explicitly be stated as follows.

{\tableANDalgSize

\noindent\rule{\linewidth}{0.3mm}
\\
\emph{Algorithm 2: \ \textsc{The Alternating Direction Method of Multipliers (\small{ADMM}) }}

\vspace{-0.2cm}
\noindent\rule{\linewidth}{0.3mm}
\begin{enumerate}
   \item \label{Alg:ADMM-initialization-step}  \textbf{Initialization:} Set $t=0$ and put initial values to $\vec{z}(t)$, $\vec{y}(t)$, and $\rho$.
   \item \label{Alg:ADMM-x-step} \textbf{\vec{x}-update:}  $\vec{x}(t{+}1)=\underset{\vec{x}\in \mathcal{X}}{\text{argmin}}~  L_{\rho}(\vec{x},\vec{z}(t),\vec{y}(t))$.
   \item \label{Alg:ADMM-z-step} \textbf{\vec{z}-update:} $\vec{z}(t{+}1)=\underset{\vec{z}\in \mathcal{Z}}{\text{argmin}}~L_{\rho}(\vec{x}(t{+}1),\vec{z},\vec{y}(t))$.
   \item \label{Alg:ADMM-y-step} \textbf{\vec{y}-update:} $\vec{y}(t{+}1)= \vec{y}(t) + \rho  (\vec{A}\vec{x}(t{+}1){+}\vec{B}\vec{z}(t+1)-\vec{c})$.
   \item \label{Alg:ADMM-stop} \textbf{Stopping criterion:} If stopping criterion is met terminate, otherwise set $t=t+1$ and go to step 2.
\end{enumerate}
\vspace{-3mm}
\rule{\linewidth}{0.3mm}

}

 \noindent


%
%



 \BlueText{Unlike in Algorithm~1 (ADPM), in Algorithm~2 (ADMM) the penalty parameter is fixed. }
  The first step is the initialization (step~\ref{Alg:ADMM-initialization-step}).
  As presented above, the $\vec{x}$- and $\vec{z}$- updates require a \emph{solution} of an optimization problem.
 This is not as restrictive as it may seem, since under mild conditions such requirements are accomplished, see Examples~\ref{example:subproblem-become-strongly-convex}-\ref{example:subproblem-become-strongly-convex-3}. However, we note that no such global optimality requirement of $\vec x(t+1)$ and $\vec z(t+1)$ is necessary in our convergence assertions, as we will show in subsequent sections. More specifically, our convergence results apply as long as $\vec x(t+1)$ [respectively, $\vec z(t+1)$] is a local minimum. 

   \subsection{Algorithm Properties} \label{sec:ADMM-analytic-properties}

 In this section, we show that, under mild assumptions, if the sequence $\{\vec y(t)\}_{t\in\N}$ converges to some $\bar{\vec y}$, then any limit point of the sequence $\{\vec x(t),\vec z(t)\}_{t\in\N}$, together with $\bar{\vec y}$, satisfy FON conditions of Problem~\eqref{eq:main_problem_formulation}~(compare with Definition~\ref{def:def_first_order_necessary_condition}). It is worth noting that these results hold regardless of whether $f$, $g$, $\mathcal{X}$, and $\mathcal{Z}$ are convex or nonconvex.

 Let us now scrutinize the above assertion precisely. The analysis is based on the following assumption which can be expected to hold for many problems of practical interest:

\begin{assumption} \label{assumption:on_sets_X_and_Z}
   The sets $\mathcal{X}$ and $\mathcal{Z}$ of problem~\eqref{eq:main_problem_formulation} are closed and can be expressed in terms of a finite number of equality and inequality constraints. In particular,
  \begin{align*}
     \mathcal{X} &= \{\vec{x}\in \R^{p_1} ~   \big| ~ \Bo\psi(\vec{x})=\vec{0}, ~~\Bo\phi(\vec{x})\leq\vec{0}\}, \\
     \mathcal{Z} &= \{\vec{z}\in \R^{p_2} ~ \big| ~ \Bo\theta(\vec{z})=\vec{0},~~~ \Bo\sigma(\vec{z})\leq\vec{0}\},
  \end{align*}
   where $\Bo\psi:\R^{p_1}\rightarrow \R^{q_1}$, $\Bo\phi:\R^{p_1}{\rightarrow} \R^{q_2}$, $\Bo\theta:\R^{p_2}{\rightarrow} \R^{q_3}$, and $\Bo\sigma:\R^{p_2}{\rightarrow} \R^{q_4}$ are continuously differentiable functions.
\end{assumption}

\begin{assumption} \label{assumption:local_optimality}
        For every $t\in\N$, $\vec x(t)$ [respectively, $\vec z(t)$] computed at step~$2$ (respectively, step~$3$) of the ADMM algorithm is locally \emph{or} globally optimal. 
\end{assumption}
\begin{assumption} \label{assumption:ADMMregularity}
         Let $\mathcal{L}$ denote the set of limit points of the sequence $\{(\vec{x}(t),\vec{z}(t))\}_{t\in \N}$ and let $(\bec{x},\bec{z})\in\mathcal{L}$. The set of constraint gradient vectors at $\bec{x}$,
%
        \be  \label{eq:assumpt9} 
        \mathcal{C}_{\mathcal{X}}(\bec{x}){=} \{ {\nabla} \Bo\psi_i(\bec{x}) | i{=}1,{\cdots}, q_1\}  {\cup}
   \{ \nabla \Bo\phi_i(\bec{x}) | i {\in} \mathcal{A}_{\mathcal{X}}(\bec{x}) \}
        \ee
        associated with the set $\mathcal{X}$ is linearly independent, where $\mathcal{A}_{\mathcal{X}}(\bec{x})=\{ i \ | \  \Bo\phi_i(\bec{x}) = 0 \}$. Similarly, the corresponding set of constraint gradient vectors $\mathcal{C}_{\mathcal{Z}}$ associated with the set $\mathcal{Z}$ is linearly independent.
\end{assumption}
 Assumption~\ref{assumption:on_sets_X_and_Z} is self-explanatory. Note that steps~$2$ and $3$ of the algorithm involve nonconvex optimization problems, where the computational cost of finding the solutions $\vec x(t+1)$ and $\vec z(t+1)$, in general, can be entirely prohibitive.
 However, Assumption~\ref{assumption:local_optimality} indicates that the solution $\vec{x}(t+1)$ [respectively, $\vec z(t+1)$] of the optimization problem associated with the steps~$2$ (respectively, $3$) of the ADMM should \emph{only} be a \emph{local minimum} and not necessarily a global minimum.
 Thus, Assumption~\ref{assumption:local_optimality} can usually be accomplished by employing efficient local optimization methods (see \cite[Section~1.4.1]{convex_boyd}). In the literature, Assumption~\ref{assumption:ADMMregularity} is called the ``regularity assumption" and is usually satisfied in practice. Moreover, any point that complies with the assumption is called regular, see~\cite[p.~269]{nonlinear_bertsekas}. Let us next document two results that will be important later.
\begin{lemma} \label{remark:independence}
        Suppose Assumptions~\ref{assumption:on_sets_X_and_Z} and \ref{assumption:ADMMregularity} hold.
%
%
 Let $\{ (\vec{x}(t_k),\vec{z}(t_k)) \}_{k\in \N}$ be a subsequence of $\{ (\vec{x}(t),\vec{z}(t)) \}_{t\in \N}$ with $\lim_{k\rightarrow \infty} (\vec{x}(t_k), \vec{z}(t_k))=  (\bec{x}, \bec{z})$.
 Then there exists $K$ such that the sets of vectors $\mathcal{C}_{\mathcal{X}}(\vec{x}(t_k))$ and $\mathcal{C}_{\mathcal{Z}}(\vec{z}(t_k))$ [cf~\eqref{eq:assumpt9}] are each linearly independent for all $k\geq K$. 
\end{lemma}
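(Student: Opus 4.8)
The plan is to show that the linear independence at the limit propagates to nearby iterates, exploiting that (i) the active set can only shrink along the convergent subsequence and (ii) full column rank is an open condition. First I would analyze the inequality constraints of $\mathcal{X}$. Since each $\Bo\phi_i$ is continuous (Assumption~\ref{assumption:on_sets_X_and_Z}) and $\vec{x}(t_k)\to\bec{x}$, any index $i$ with $\Bo\phi_i(\bec{x})<0$ satisfies $\Bo\phi_i(\vec{x}(t_k))<0$ for all large $k$. As there are only finitely many indices, there is $K_1$ such that every inequality constraint inactive at $\bec{x}$ remains inactive at $\vec{x}(t_k)$ for $k\geq K_1$; equivalently $\mathcal{A}_{\mathcal{X}}(\vec{x}(t_k))\subseteq\mathcal{A}_{\mathcal{X}}(\bec{x})$ for $k\geq K_1$. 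This is the standard upper semicontinuity of the active set, and it is the only containment direction I need.

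Next I would fix the ``limiting'' collection of gradients. Let $\vec{G}(\vec{x})$ be the matrix whose columns are $\nabla\Bo\psi_1(\vec{x}),\ldots,\nabla\Bo\psi_{q_1}(\vec{x})$ together with $\nabla\Bo\phi_i(\vec{x})$ for $i\in\mathcal{A}_{\mathcal{X}}(\bec{x})$, i.e.\ the gradients indexed by the active set evaluated \emph{at the limit}. By construction the columns of $\vec{G}(\bec{x})$ are exactly $\mathcal{C}_{\mathcal{X}}(\bec{x})$, which is linearly independent by Assumption~\ref{assumption:ADMMregularity}, so $\vec{G}(\bec{x})$ has full column rank. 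Because $\Bo\psi$ and $\Bo\phi$ are continuously differentiable, the map $\vec{x}\mapsto\vec{G}(\vec{x})$ is continuous; since full column rank is an open condition (for instance, the Gram determinant $\det(\vec{G}(\vec{x})\tran\vec{G}(\vec{x}))$ is continuous and nonzero at $\bec{x}$), $\vec{G}$ retains full column rank on a neighbourhood of $\bec{x}$. Hence there is $K_2$ with $\vec{G}(\vec{x}(t_k))$ of full column rank for all $k\geq K_2$.

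Finally I would combine the two observations. For $k\geq\max(K_1,K_2)$ the set $\mathcal{C}_{\mathcal{X}}(\vec{x}(t_k))$ consists of all equality gradients together with the inequality gradients indexed by $\mathcal{A}_{\mathcal{X}}(\vec{x}(t_k))\subseteq\mathcal{A}_{\mathcal{X}}(\bec{x})$, so it is a subcollection of the (linearly independent) columns of $\vec{G}(\vec{x}(t_k))$. Since any subset of a linearly independent set is linearly independent, $\mathcal{C}_{\mathcal{X}}(\vec{x}(t_k))$ is linearly independent. Running the identical argument for $\mathcal{Z}$ with its constraints $\Bo\theta,\Bo\sigma$ and its regularity hypothesis yields a corresponding threshold; taking $K$ to be the maximum of all thresholds completes the proof.

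I expect the only delicate point to be the active-set bookkeeping: one must invoke the containment $\mathcal{A}_{\mathcal{X}}(\vec{x}(t_k))\subseteq\mathcal{A}_{\mathcal{X}}(\bec{x})$ in the correct direction, since constraints active at the limit may fail to be active at the iterates, whereas constraints inactive at the limit are guaranteed inactive nearby. Everything else reduces to the openness of the full-rank condition together with continuity of the constraint gradients.
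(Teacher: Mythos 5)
Your proposal is correct and follows essentially the same route as the paper's proof: establish $\mathcal{A}_{\mathcal{X}}(\vec{x}(t_k))\subseteq\mathcal{A}_{\mathcal{X}}(\bec{x})$ for large $k$ by continuity, then apply continuity of the Gram determinant of the gradient matrix indexed by the limit's active set (your $\vec{G}$ is exactly the paper's $\vec{D}$) to propagate full column rank from $\bec{x}$ to the iterates. Your write-up is in fact somewhat more explicit than the paper's about the active-set containment direction and the final subset-of-independent-columns step, but there is no substantive difference in the argument.
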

\begin{proof}
 First note that if $i\notin \mathcal{A}(\bec{x})$, then $\Bo\phi_i(\vec{x}(t_k))<0$ [or $i \notin \mathcal{A}(\vec{x}(t_k))]$ for all sufficiently large $k$, since $\Bo\phi_i$ is continuous and the set $\{x\in \R|x\neq 0\}$ is open.
 Therefore, it suffices to show that the columns of the matrix $\vec{D}(\vec{x}(t_k))\in \R^{p_1\times (q_1+ |\mathcal{A}(\bec{x})|)}$ are linearly independent  for all sufficiently large $k$, where
  \begin{equation} \label{eq:Remark_1_D_matrix}
     \vec{D}(\vec{x}) = [(\nabla \Bo\psi_i(\vec{x}))_{i{=}1,{\cdots}, q_1}, ( \nabla \Bo\phi_i(\vec{x}) )_{i\in\mathcal{A}_{\mathcal{X}}(\bec{x})}].
  \end{equation}
 Since $\operatorname{Det}\big(\vec{D}(\vec{x})\tran \vec{D}(\vec{x})\big)$ is continuous (see Assumption~\ref{assumption:on_sets_X_and_Z}),
$\operatorname{Det}\big(\vec{D}(\vec{x}(t_k))\tran\vec{D}(\vec{x}(t_k))\big)$ can be made arbitrarily close to $\operatorname{Det}\big(\vec{D}(\bec{x})\tran \vec{D}(\bec{x})\big)$, which is \emph{nonzero}, see Assumption~\ref{assumption:ADMMregularity}.
 Equivalently, there exists $K\in \N$ such that $\operatorname{Det}\big(\vec{D}(\vec{x}(t_k))\tran\vec{D}(\vec{x}(t_k))\big)$ is nonzero for all $k\geq K$, which in turn ensures that $\mathcal{C}_{\mathcal{X}}(\vec{x}(t_k))$ is a linearly independent set for $k\geq K$. The linear independence of  $\mathcal{C}_{\mathcal{Z}}(\vec{z}(t_k))$ for all sufficiently large $k$ can be proved similarly.
%
\end{proof}
%
\begin{lemma} \label{assumption:conciseFON}
        Suppose  Assumptions~\ref{assumption:ADPM-1}, \ref{assumption:on_sets_X_and_Z}, \ref{assumption:local_optimality}, and \ref{assumption:ADMMregularity} hold.
  Let $\{ (\vec{x}(t_k),\vec{z}(t_k)) \}_{k\in \N}$ be a subsequence of $\{ (\vec{x}(t),\vec{z}(t)) \}_{t\in \N}$ with $\lim_{k\rightarrow \infty} (\vec{x}(t_k), \vec{z}(t_k))=  (\bec{x}, \bec{z})$.
 Then for sufficiently large $k$, there exist Lagrange multipliers $(\Bo\lambda(t_k), \Bo\gamma(t_k))\in \R^{q_1}\times \R^{q_2}$ \emph{[respectively, $(\Bo\mu(t_k),$ $\Bo\omega(t_k))$ $\in \R^{q_3}\times \R^{q_4}$]} such that the pair $\vec{x}(t_k)$, $(\Bo\lambda(t_k), \Bo\gamma(t_k))$ \emph{[respectively, $\vec{z}(t_k)$,  $(\Bo\mu(t_k), \Bo\omega(t_k))$]} satisfies the FON conditions of the optimization problem in the $\vec{x}$- \emph{(respectively, $\vec{z}$-)} update of the ADMM algorithm (compare with Definition~\ref{def:def_first_order_necessary_condition}).
\end{lemma}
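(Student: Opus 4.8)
The plan is to observe that each iterate $\vec{x}(t_k)$ is, by construction, a local or global minimizer of a smooth nonlinear program, and then to invoke the classical first-order necessary condition theorem for constrained optimization, whose only nontrivial hypothesis is a constraint qualification at the minimizer. First I would pin down the subproblem: by step~$2$ of Algorithm~2 together with Assumption~\ref{assumption:on_sets_X_and_Z}, the iterate $\vec{x}(t_k)$ is produced by the $\vec{x}$-update at iteration $t_k-1$, hence it is a minimizer (local or global, by Assumption~\ref{assumption:local_optimality}) of
\begin{equation*}
\underset{\vec{x}}{\text{minimize}}~ L_{\rho}(\vec{x},\vec{z}(t_k{-}1),\vec{y}(t_k{-}1)) \quad \text{s.t.} \quad \Bo\psi(\vec{x})=\vec{0},~~\Bo\phi(\vec{x})\leq\vec{0}.
\end{equation*}
Under Assumption~\ref{assumption:ADPM-1} the objective is continuously differentiable in $\vec{x}$ (the penalty term is a smooth quadratic), and under Assumption~\ref{assumption:on_sets_X_and_Z} the constraint maps $\Bo\psi,\Bo\phi$ are continuously differentiable, so this is a smooth program of exactly the form covered by Definition~\ref{def:def_first_order_necessary_condition}.

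Next I would supply the constraint qualification at the iterate. The standard first-order necessary condition theorem \cite{nonlinear_bertsekas} guarantees KKT multipliers at a local minimum only when that point is \emph{regular}, i.e., when the active constraint gradients are linearly independent. Assumption~\ref{assumption:ADMMregularity} provides regularity at the limit point $\bec{x}$, but not a priori at the iterates; this gap is exactly what Lemma~\ref{remark:independence} closes, yielding an index $K$ such that $\mathcal{C}_{\mathcal{X}}(\vec{x}(t_k))$ [cf.~\eqref{eq:assumpt9}] is linearly independent for all $k\geq K$. Hence $\vec{x}(t_k)$ is a regular point of the $\vec{x}$-update problem for all such $k$.

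Applying the theorem at the regular local minimizer $\vec{x}(t_k)$ then produces multipliers $(\Bo\lambda(t_k),\Bo\gamma(t_k))\in\R^{q_1}\times\R^{q_2}$ satisfying dual feasibility $\Bo\gamma(t_k)\geq\vec{0}$, complementary slackness $(\Bo\gamma(t_k))_i\,\Bo\phi_i(\vec{x}(t_k))=0$, and the stationarity relation
\begin{equation*}
\nabla_{\vec{x}} L_{\rho}(\vec{x}(t_k),\vec{z}(t_k{-}1),\vec{y}(t_k{-}1)) = \nabla\Bo\psi(\vec{x}(t_k))\,\Bo\lambda(t_k) + \nabla\Bo\phi(\vec{x}(t_k))\,\Bo\gamma(t_k);
\end{equation*}
together with primal feasibility $\vec{x}(t_k)\in\mathcal{X}$ these are precisely the FON conditions of the $\vec{x}$-update. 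The claim for $\vec{z}(t_k)$ follows verbatim, using the continuous differentiability of $g$, $\Bo\theta$, and $\Bo\sigma$ and the linear independence of $\mathcal{C}_{\mathcal{Z}}(\vec{z}(t_k))$ from Lemma~\ref{remark:independence}.

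The only genuinely nontrivial point is transferring the constraint qualification from the limit point, where it is assumed, to the tail of the subsequence, where it is needed; this is precisely the content of Lemma~\ref{remark:independence}, established via continuity of the Gram determinant of the active-constraint gradients. Once regularity of $\vec{x}(t_k)$ and $\vec{z}(t_k)$ is in hand for large $k$, the remainder is a direct citation of the classical KKT theorem, so I expect no further obstacle.
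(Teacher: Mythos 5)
Your proposal is correct and follows essentially the same route as the paper: the paper's proof likewise invokes Lemma~\ref{remark:independence} to establish regularity of $\vec{x}(t_k)$ and $\vec{z}(t_k)$ for sufficiently large $k$, and then cites the classical first-order necessary condition theorem at a regular local minimum (\cite[Proposition~3.3.1]{nonlinear_bertsekas}). Your write-up merely spells out the intermediate details (identification of the subproblem, smoothness of its data, and the explicit KKT conclusions) that the paper leaves implicit.
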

\begin{proof}
  From Lemma~\ref{remark:independence}, we have that $\vec{x}(t_k)$ and $\vec{z}(t_k)$ are {regular} for sufficiently large $k$. This combined with the assumptions yields the result, which is an immediate consequence of~\cite[Proposition~3.3.1]{nonlinear_bertsekas}
\end{proof}

Lemmas~\ref{remark:independence} and \ref{assumption:conciseFON} play a central role when deriving our convergence results, as we will show in the sequel. The following proposition establishes the convergence results of the ADMM algorithm:

\begin{prop}\label{prop:ADMM-main-prop}
    Suppose the Assumptions~\ref{assumption:ADPM-1}, \ref{assumption:on_sets_X_and_Z}, \ref{assumption:local_optimality}, and \ref{assumption:ADMMregularity} hold and the sequence $\vec{y}(t)$ converges to a point, i.e., $\lim_{t\rightarrow \infty} \vec{y}(t){=}\bec{y}$ for some $\bec{y}$.
    Then every limit point of the sequence $\{\vec{x}(t),\vec{z}(t)\}_{t\in \N}$, together with $\bec{y}$ and some $\Bo\lambda {\in}\R^{q_1}$, $\Bo\gamma {\in}\R^{q_2}$, $\Bo\mu{\in}\R^{q_3}$, and $\Bo\omega {\in} \R^{q_4}$ satisfy the FON conditions of Problem~\eqref{eq:main_problem_formulation}.
\end{prop}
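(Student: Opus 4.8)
The plan is to fix an arbitrary limit point $(\bec{x},\bec{z})$ of $\{(\vec{x}(t),\vec{z}(t))\}_{t\in\N}$, together with a subsequence $\{(\vec{x}(t_k),\vec{z}(t_k))\}_{k\in\N}$ converging to it, and to verify the four FON conditions of Definition~\ref{def:def_first_order_necessary_condition} for Problem~\eqref{eq:main_problem_formulation} in turn. First I would dispose of primal feasibility (condition~1). Since $\vec{y}(t)\to\bec{y}$, the $\vec{y}$-update of Algorithm~2 gives the residual $r(t)=\vec{A}\vec{x}(t)+\vec{B}\vec{z}(t)-\vec{c}=(1/\rho)(\vec{y}(t)-\vec{y}(t-1))\to\vec{0}$, so passing to the limit along $t_k$ yields $\vec{A}\bec{x}+\vec{B}\bec{z}=\vec{c}$; closedness of $\mathcal{X},\mathcal{Z}$ (Assumption~\ref{assumption:on_sets_X_and_Z}) gives $\bec{x}\in\mathcal{X}$ and $\bec{z}\in\mathcal{Z}$. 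Dual feasibility and complementary slackness (conditions~2 and~3) for the multipliers of the inequality constraints $\Bo\phi\leq\vec{0}$ and $\Bo\sigma\leq\vec{0}$ are then obtained at the very end by passing the finite-$k$ relations to the limit.

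The heart of the argument is condition~4 (stationarity of the Lagrangian). By Lemma~\ref{remark:independence} the gradient families $\mathcal{C}_{\mathcal{X}}(\vec{x}(t_k))$ and $\mathcal{C}_{\mathcal{Z}}(\vec{z}(t_k))$ are linearly independent for large $k$, so Lemma~\ref{assumption:conciseFON} supplies multipliers $(\Bo\lambda(t_k),\Bo\gamma(t_k))$ and $(\Bo\mu(t_k),\Bo\omega(t_k))$ making $\vec{x}(t_k)$ and $\vec{z}(t_k)$ stationary for their subproblems. I would then eliminate the penalty term using the $\vec{y}$-update. For the $\vec{z}$-subproblem this is clean, because $\vec{z}(t_k)$ and $\vec{y}(t_k)$ share the same residual, giving
\begin{equation*}
 \nabla g(\vec{z}(t_k))+\vec{B}\tran\vec{y}(t_k)=\nabla\Bo\theta(\vec{z}(t_k))\Bo\mu(t_k)+\nabla\Bo\sigma(\vec{z}(t_k))\Bo\omega(t_k).
\end{equation*}
For the $\vec{x}$-subproblem, solved with the stale data $\vec{z}(t_k-1),\vec{y}(t_k-1)$, the same substitution leaves a correction term,
\begin{equation*}
 \nabla f(\vec{x}(t_k))+\vec{A}\tran\vec{y}(t_k)-\rho\,\vec{A}\tran\vec{B}(\vec{z}(t_k)-\vec{z}(t_k-1))=\nabla\Bo\psi(\vec{x}(t_k))\Bo\lambda(t_k)+\nabla\Bo\phi(\vec{x}(t_k))\Bo\gamma(t_k).
\end{equation*}
Granting for the moment that this correction term vanishes in the limit, the left-hand sides converge by continuity of the gradients and $\vec{y}(t_k)\to\bec{y}$; since the right-hand side gradient matrices converge to full-column-rank limits (Lemma~\ref{remark:independence}), the multipliers are the unique least-squares solutions and hence converge to some $\Bo\lambda,\Bo\gamma,\Bo\mu,\Bo\omega$. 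Passing to the limit then yields the two stationarity identities at $(\bec{x},\bec{z},\bec{y})$, i.e.\ condition~4, while $\Bo\gamma(t_k),\Bo\omega(t_k)\geq\vec{0}$ and $\Bo\gamma_i(t_k)\Bo\phi_i(\vec{x}(t_k))=0$, $\Bo\omega_i(t_k)\Bo\sigma_i(\vec{z}(t_k))=0$ pass to the limit to give conditions~2 and~3.

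The main obstacle is precisely the correction term in the $\vec{x}$-stationarity, equivalently the cross-residual $s(t_k)=\vec{A}\vec{x}(t_k)+\vec{B}\vec{z}(t_k-1)-\vec{c}$, which I must drive to $\vec{0}$. Using $r(t)\to\vec{0}$ one has the two identities $s(t_k)=r(t_k)-\vec{B}(\vec{z}(t_k)-\vec{z}(t_k-1))=\vec{A}(\vec{x}(t_k)-\vec{x}(t_k-1))+r(t_k-1)$, so the task reduces to controlling the successive change $\vec{B}(\vec{z}(t_k)-\vec{z}(t_k-1))$ along the subsequence. This is delicate because the hypotheses are weak: $\mathcal{X},\mathcal{Z}$ are only closed (so the stale iterate $\vec{z}(t_k-1)$ is not even \emph{a priori} bounded), the updates are only locally optimal (Assumption~\ref{assumption:local_optimality}), and no convexity is assumed, so the usual monotone sufficient-decrease route to ``$\vec{z}(t{+}1)-\vec{z}(t)\to\vec{0}$'' is unavailable. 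My plan is to leverage convergence of $\vec{y}$ one step further: \emph{provided} the stale iterates $\vec{z}(t_k-1)$ can be shown bounded, I extract a further subsequence with $\vec{z}(t_k-1)\to\tilde{\vec{z}}$; feasibility ($r\to\vec{0}$) then forces $\vec{A}\vec{x}(t_k-1)\to\vec{c}-\vec{B}\tilde{\vec{z}}$, whence $s(t_k)\to\vec{B}(\tilde{\vec{z}}-\bec{z})$, and I would use the clean $\vec{z}$-stationarity together with the regularity assumption to identify $\vec{B}\tilde{\vec{z}}=\vec{B}\bec{z}$, giving $s(t_k)\to\vec{0}$. Securing this final identification---tying the limit of the stale $\vec{z}(t_k-1)$ to that of $\vec{z}(t_k)$ through $\vec{B}$, and establishing the requisite boundedness---is the crux on which the entire stationarity argument rests.
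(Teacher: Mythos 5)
Your skeleton matches the paper's proof closely: primal feasibility from the convergence of $\vec{y}(t)$ (the residual is a scaled dual increment) plus closedness of $\mathcal{X}\times\mathcal{Z}$; subproblem multipliers supplied by Lemmas~\ref{remark:independence} and~\ref{assumption:conciseFON}; their convergence via the least-squares/pseudo-inverse formula, which is exactly the content of Lemma~\ref{Lemma:In-ADMM-proof}; the clean $\vec{z}$-stationarity identity after substituting the $\vec{y}$-update; the $\vec{x}$-stationarity identity carrying the correction term $\rho\vec{A}\tran\vec{B}(\vec{z}(t_k)-\vec{z}(t_k-1))$; and conditions 2) and 3) obtained by passing the finite-$k$ relations to the limit. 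Up to that point the proposal is sound and essentially the paper's argument.

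The gap is exactly where you place it, and your proposal does not close it: you never prove $\vec{B}(\vec{z}(t_k)-\vec{z}(t_k-1))\to\vec{0}$, you only sketch a plan that is conditional on (i) boundedness of the stale iterates $\vec{z}(t_k-1)$ and (ii) the identification $\vec{B}\tilde{\vec{z}}=\vec{B}\bec{z}$, and you establish neither. Worse, (ii) does not follow from the tools you cite: $\vec{z}(t_k-1)$ is stationary for its \emph{own} subproblem with data $\vec{x}(t_k-1),\vec{y}(t_k-2)$, and the regularity assumption constrains constraint gradients at limit points but says nothing that ties $\tilde{\vec{z}}$ to $\bec{z}$; nothing in your argument excludes the sequence oscillating between two limit points with different values of $\vec{B}\vec{z}$, which is precisely the scenario that must be ruled out. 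The paper closes this step with a global (not subsequential) series argument rather than compactness: from the dual update, the partial sums of $\sum_{t}(\vec{A}\vec{x}(t)+\vec{B}\vec{z}(t)-\vec{c})$ equal $(\vec{y}(t)-\vec{y}(0))/\rho$, so this series converges, and the paper asserts the same for the cross series $\sum_{t}(\vec{A}\vec{x}(t)+\vec{B}\vec{z}(t+1)-\vec{c})$; subtracting, the telescoping series $\sum_{t}\vec{B}(\vec{z}(t+1)-\vec{z}(t))$ converges, hence its terms vanish, and multiplying by $\rho\vec{A}\tran$ kills the correction term along the whole sequence. (The paper's own justification of the cross-series convergence is itself terse, which confirms that this is the genuinely delicate point, but summability-from-dual-convergence is the mechanism that replaces your conditional step.)
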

\begin{proof}
   Let $(\bec{x},\bec{z})$ be a limit point of $\{(\vec{x}(t),\vec{z}(t))\}_{t\in \N}$ and $\{(\vec{x}(t_k),\vec{z}(t_k))\}_{k\in \N}$ be a subsequence such that
   $\lim_{k\rightarrow \infty} (\vec{x}(t_k),\vec{z}(t_k)) = (\bec{x},\bec{z})$.
     We show that the primal variables $\bec{x},\bec{z}$ and the Lagrange multipliers $\bec{y}$, $\Bo\lambda$, $\Bo\gamma$, $\Bo\mu$, and $\Bo\omega$ satisfy the first order necessary conditions, where $\Bo\lambda$, $\Bo\gamma$, $\Bo\mu$, and $\Bo\omega$ are chosen as in Lemma~\ref{Lemma:In-ADMM-proof}.



  In the sequel, we show that the four conditions of Definition~\ref{eq:def_first_order_necessary_condition} (First order necessary condition) are all satisfied.

  1) Primal feasibility: Since $(\vec{x}(t_k),\vec{z}(t_k))\in\mathcal{X}\times \mathcal{Z}$ and the set $\mathcal{X}\times \mathcal{Z}$ is closed it follows that $(\bec{x},\bec{z}) \in  \mathcal{X}\times \mathcal{Z}$.
    Since $\bec{y} {=}\vec{y}(0)+\sum_{t=1}^{\infty} \rho (\vec{A}\vec{x}(t)+\vec{B}\vec{z}(t)-\vec{c})$, we must have $\lim_{t\rightarrow \infty} ||\vec{A}\vec{x}(t)+\vec{B}\vec{z}(t)-\vec{c}||^ 2=0$, or   $\vec{A}\bec{x}+\vec{B}\bec{z}=\vec{c}$.

  2) Dual feasibility: It holds for $\Bo\gamma (t_k)$ and $\Bo\omega(t_k)$ from Lemma~\ref{assumption:conciseFON} that $\Bo\gamma (t_k)\geq \vec{0}$ and $\Bo\omega(t_k) \geq \vec{0}$ (compare with Definition~\ref{def:def_first_order_necessary_condition}).  Hence, since the closed right half-plane is a closed set, it follows that $\Bo\gamma\geq \vec{0}$ and $\Bo\omega\geq \vec{0}$.

  3) Complementary slackness:  If $\Bo \phi_i (\bec{x})=0$ then $\Bo \gamma_i \Bo \phi_i (\bec{x}) = 0$ trivially holds.
                                                  On the other hand, if $\Bo \phi_i (\bec{x})<0$ then we showed in the proof of lemma~\ref{Lemma:In-ADMM-proof} that $\Bo \gamma_i = 0$.
    Hence, it follows that  $\Bo \gamma_i \Bo \phi_i (\bec{x}) = 0$.

 4) Lagrangian vanishes:  We need to show that
      \begin{align}
          \nabla_{\vec{x}} f(\bec{x})+\vec{A}\tran \bec{y} + \nabla_{\vec{x}}  \Bo\psi(\bec{x})  \Bo\lambda + \nabla_{\vec{x}}  \Bo\phi(\bec{x}) \Bo\gamma = \vec{0}, \label{eq:ADMMproof_LagVan_1}\\
          \nabla_{\vec{z}}  g(\bec{z})+\vec{B}\tran \bec{y} + \nabla_{\vec{z}}  \Bo\theta(\bec{z})  \Bo\mu + \nabla_{\vec{z}}  \Bo\sigma(\bec{z}) \Bo\omega = \vec{0}.  \label{eq:ADMMproof_LagVan_2}
      \end{align}
      Let us start by showing~\eqref{eq:ADMMproof_LagVan_2}.
      From Lemma~\ref{assumption:conciseFON}, we get for all sufficiently large $k$ that [compare with Definition~\ref{def:def_first_order_necessary_condition}]:
     \begin{multline} \label{eq:ADMM_KKT_proof_step_4_1}
          \nabla_{\vec{z}} L(\vec{x}(t_k),\vec{z},\vec{y}(t_k{-}1))  + \nabla_{\vec{z}}  \Bo\theta (\vec{z}(t_k)) \Bo\mu(t_k)  \\ + \nabla_{\vec{x}}  \Bo\sigma(\vec{x}(t_k)) \Bo\omega(t_k) {=} \vec{0}.
      \end{multline}
      By using $\vec{y}(t_k-1)=\vec{y}(t_k)-\rho(\vec{A}\vec{x}(t_k)+\vec{B}\vec{z}(t_k)-\vec{c})$ in equation~\eqref{eq:ADMM_KKT_proof_step_4_1} and rearranging the terms we get that
     \begin{multline}
          \nabla_{\vec{z}} g(\vec{z}(t_k))+\vec{B}\tran \vec{y}(t_k) +\nabla_{\vec{z}}  \Bo\theta (\vec{z}(t_k))  \Bo\mu(t_k) \\ {+} \nabla_{\vec{x}}  \Bo\sigma(\vec{x}(t_k)) \Bo\omega(t_k) {=} \vec{0}.
     \end{multline}
      By using that $\lim_{k\rightarrow \infty} (\vec{x}(t_k),\vec{z}(t_k),\vec{y}(t_k)) = (\bec{x},\bec{z},\bec{y})$ and  $\lim_{k\rightarrow \infty} ( \Bo\lambda(t_k),\Bo\gamma(t_k),  \Bo\mu(t_k), \Bo\omega(t_k)) = ( \Bo\lambda,\Bo\gamma,  \Bo\mu, \Bo\omega)$, we conclude that equation~\eqref{eq:ADMMproof_LagVan_2} holds.
     By using the same arguments as above we get  for all sufficiently large $k$ that  
     \begin{multline}
          \nabla_{\vec{x}} f(\vec{x}(t_k))+\vec{A}\tran \vec{y}(t_k) +\nabla_{\vec{x}}  \Bo\psi(\vec{x}(t_k))  \Bo\lambda(t_k) \\ {+} \nabla_{\vec{x}}  \Bo\phi(\vec{x}(t_k)) \Bo\gamma(t_k) = \rho \vec{A}\tran \vec{B} (\vec{z}(t_k){-}\vec{z}(t_k-1)).
     \end{multline}
     Therefore, by the arguments above, if we can show that $\lim_{t\rightarrow \infty}\rho \vec{A}\tran \vec{B} (\vec{z}(t{+}1){-}\vec{z}(t))=\vec{0}$, then equation~\eqref{eq:ADMMproof_LagVan_1} holds.
     The assumption $\bec{y}=\lim_{t\rightarrow \infty} \vec{y}(t)$ together with the relation $\vec{y}(t+1) =\vec{y}(0)+\rho \sum_{l=1}^{t+1} \vec{A}\vec{x}(l)+\vec{B}\vec{z}(l)-\vec{c}$ can be used to show that the series
      \begin{equation*}
             {\sum_{t=1}^{\infty}}  (\vec{A}\vec{x}(t){+}\vec{B}\vec{z}(t{+}1){-}\vec{c}),~~
             {\sum_{t=1}^{\infty}}  (\vec{A}\vec{x}(t){+}\vec{B}\vec{z}(t){-}\vec{c}),
    \end{equation*}
    are convergent.
   By taking the difference of the two series 
   and using that the sum of convergent series is a convergent series, we get that $\sum_{t=1}^{\infty} \vec{B}(\vec{z}(t{+}1)-\vec{z}(t))$ is a convergent series. Thus, implying that $\lim_{t\rightarrow \infty} \vec{B}(\vec{z}(t{+}1)-\vec{z}(t)) =0$.
   By multiplying $\rho \vec{A}\tran$ from the left side we get that $\lim_{t\rightarrow \infty}  \rho \vec{A}\tran \vec{B}(\vec{z}(t{+}1)-\vec{z}(t)) =0$.
\end{proof}

  \begin{lemma} \label{Lemma:In-ADMM-proof}
      Let $\{t_k\}_{k\in\N}$ be a sequence such that $\lim_{k\rightarrow \infty} (\vec{x}(t_k),\vec{z}(t_k)) = (\bec{x},\bec{z})$. Then the limits: $\underset{k\rightarrow \infty}{\lim} \Bo\lambda(t_k)$, $\underset{k\rightarrow \infty}{\lim}  \Bo\gamma(t_k)$, $\underset{k\rightarrow \infty}{\lim}  \Bo\mu(t_k)$, and $\underset{k\rightarrow \infty}{\lim}  \Bo\omega(t_k)$ exist, where $\Bo\lambda(t_k)$, $\Bo\gamma(t_k)$, $\Bo\mu(t_k)$, and $\Bo\omega(t_k)$ chosen as in Lemma~\ref{assumption:conciseFON}.
  \end{lemma}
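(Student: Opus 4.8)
The plan is to write each multiplier vector as the unique solution of a full–rank normal–equation system and then let $k\to\infty$, exploiting that the active constraint gradients remain linearly independent near the limit point. Throughout I work under the hypotheses of Proposition~\ref{prop:ADMM-main-prop}; in particular $\vec y(t)\to\bec y$.

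First I would record the Lagrangian–vanishing conditions supplied by Lemma~\ref{assumption:conciseFON}. For large $k$ the $\vec z$–update condition reads $\nabla g(\vec z(t_k))+\vec B\tran\vec y(t_k-1)+\rho\vec B\tran(\vec A\vec x(t_k)+\vec B\vec z(t_k)-\vec c)+\nabla\Bo\theta(\vec z(t_k))\Bo\mu(t_k)+\nabla\Bo\sigma(\vec z(t_k))\Bo\omega(t_k)=\vec 0$, which the $\vec y$–update collapses to $\nabla g(\vec z(t_k))+\vec B\tran\vec y(t_k)+\nabla\Bo\theta(\vec z(t_k))\Bo\mu(t_k)+\nabla\Bo\sigma(\vec z(t_k))\Bo\omega(t_k)=\vec 0$; the $\vec x$–update condition, after the same substitution, reads $\nabla f(\vec x(t_k))+\vec A\tran\vec y(t_k)-\rho\vec A\tran\vec B(\vec z(t_k)-\vec z(t_k-1))+\nabla\Bo\psi(\vec x(t_k))\Bo\lambda(t_k)+\nabla\Bo\phi(\vec x(t_k))\Bo\gamma(t_k)=\vec 0$. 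Exactly as in the proof of Lemma~\ref{remark:independence}, for $i\notin\mathcal A_{\mathcal X}(\bec x)$ continuity gives $\Bo\phi_i(\vec x(t_k))<0$ for large $k$, so complementary slackness forces $\Bo\gamma_i(t_k)=0$ (and analogously $\Bo\omega_i(t_k)=0$ for the inactive components of $\Bo\sigma$); these components converge trivially to $0$, so it suffices to treat the active multipliers.

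Next I would stack the active constraint gradients into $\vec D(\vec x(t_k))$ as in~\eqref{eq:Remark_1_D_matrix} and its $\mathcal Z$–analogue $\vec E(\vec z(t_k))$. Lemma~\ref{remark:independence} guarantees both have full column rank for all large $k$, so $\vec D\tran\vec D$ and $\vec E\tran\vec E$ are invertible and the active multipliers are given explicitly by the normal equations, namely $(\Bo\mu(t_k),\Bo\omega_{\mathcal A}(t_k))=-(\vec E\tran\vec E)^{-1}\vec E\tran(\nabla g(\vec z(t_k))+\vec B\tran\vec y(t_k))$ and $(\Bo\lambda(t_k),\Bo\gamma_{\mathcal A}(t_k))=-(\vec D\tran\vec D)^{-1}\vec D\tran(\nabla f(\vec x(t_k))+\vec A\tran\vec y(t_k)-\rho\vec A\tran\vec B(\vec z(t_k)-\vec z(t_k-1)))$, all matrices evaluated at $\vec x(t_k),\vec z(t_k)$.

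Finally I would pass to the limit. Continuity of the gradients (Assumptions~\ref{assumption:ADPM-1} and~\ref{assumption:on_sets_X_and_Z}) gives $\vec D(\vec x(t_k))\to\vec D(\bec x)$ and $\vec E(\vec z(t_k))\to\vec E(\bec z)$; the limiting Gram matrices are invertible by Assumption~\ref{assumption:ADMMregularity}, and since matrix inversion is continuous on the invertible matrices (cofactor formula, as with $\operatorname{Det}$ in Lemma~\ref{remark:independence}), the prefactors $(\vec E\tran\vec E)^{-1}\vec E\tran$ and $(\vec D\tran\vec D)^{-1}\vec D\tran$ converge. Combined with $\vec y(t_k)\to\bec y$ this at once yields convergence of $\Bo\mu(t_k),\Bo\omega(t_k)$. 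For $\Bo\lambda(t_k),\Bo\gamma(t_k)$ the only extra ingredient is convergence of the term $\rho\vec A\tran\vec B(\vec z(t_k)-\vec z(t_k-1))$; here I would invoke that $\vec y(t)\to\bec y$ alone forces $\lim_{t\to\infty}\vec B(\vec z(t+1)-\vec z(t))=\vec 0$ (the telescoping/summability argument used in the proof of Proposition~\ref{prop:ADMM-main-prop}), so this term vanishes along the subsequence and the $\vec x$–multipliers also converge. The main obstacle is precisely this last point: the $\vec x$–update is evaluated at the stale iterate $\vec z(t_k-1)$, which need not converge because $\vec B$ is not assumed to have full column rank in this section; rewriting the condition via the $\vec y$–update, which replaces the uncontrolled $\vec z(t_k-1)$ by the controllable increment $\vec B(\vec z(t_k)-\vec z(t_k-1))$, is the key maneuver, and it is noncircular since that increment fact depends only on $\vec y(t)\to\bec y$ and not on the multipliers.
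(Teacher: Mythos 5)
Your proof follows essentially the same route as the paper's: express the active multipliers through the full-rank normal equations built from $\vec{D}(\vec{x}(t_k))$ (and its $\mathcal{Z}$-analogue), force the inactive multipliers to zero by complementary slackness, and pass to the limit using continuity of the gradients and of the inverse on the set of invertible Gram matrices. In fact your version is more careful than the paper's own display, which writes the right-hand side of the normal equations as $\nabla f(\vec{x}(t_k))+\vec{A}\tran\vec{x}(t_k)$ and thereby glosses over both the dual term $\vec{A}\tran\vec{y}(t_k)$ and the stale-iterate term $\rho\vec{A}\tran\vec{B}(\vec{z}(t_k)-\vec{z}(t_k-1))$; your maneuver of substituting the $\vec{y}$-update and then invoking $\vec{B}(\vec{z}(t+1)-\vec{z}(t))\rightarrow\vec{0}$ (which, as you note, needs only $\vec{y}(t)\rightarrow\bec{y}$, exactly as in the proof of Proposition~\ref{prop:ADMM-main-prop}, and is therefore noncircular) is precisely what makes the limit passage rigorous, since $\vec{B}$ carries no full-column-rank assumption in this section and $\vec{z}(t_k-1)$ need not converge along the subsequence.
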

   \begin{proof}
         We prove the existence of the first two limits. The proof of the existence of the latter two limits follows similarly.

       Since $\nabla f$, $\nabla \Bo\psi$, and $\nabla \Bo\phi$ are continuous functions (see Assumption~\ref{assumption:ADPM-1}) we have
      \begin{align*}
             \lim_{k\rightarrow \infty} \nabla f(\vec{x}(t_k)) = \nabla f(\bec{x}), ~~~~
            \lim_{k\rightarrow \infty}  \nabla \Bo\psi (\vec{x}(t_k)) = \nabla \Bo\psi(\bec{x}),\\
          \text{and}~~~~~~~~~~~~~   \lim_{k\rightarrow \infty} \nabla \Bo\phi(\vec{x}(t_k)) = \nabla \Bo\phi(\bec{x}).
      \end{align*}
 This, together with Lemma~\ref{remark:independence} implies that there exists $K$ such that $\vec{D}(\vec{x}(t_k))\tran  \vec{D}(\vec{x}(t_k))$ (see Eq.~\eqref{eq:Remark_1_D_matrix}) is invertible for all $k\geq K$.  Hence, it follows that for all $k\geq K$, we have
%
%
  \begin{multline*}
          (\Bo\lambda(t_k),(\Bo\gamma_i(t_k))_{i\in \mathcal{A}_{\mathcal{X}}(\bec{x})})= \\
             \vec{D}(\hspace{-0.2mm}\vec{x}(t_k)\hspace{-0.4mm})\tran  \vec{D}(\hspace{-0.2mm}\vec{x}(t_k)\hspace{-0.4mm})^{-1} \hspace{-0.3mm}  \vec{D}(\hspace{-0.2mm}\vec{x}(t_k)\hspace{-0.4mm})\tran (\nabla f(\vec{x}(\hspace{-0.2mm}t_k\hspace{-0.3mm})\hspace{-0.4mm})+\vec{A}\tran \vec{x}(\hspace{-0.2mm}t_k\hspace{-0.3mm})\hspace{-0.4mm}).
  \end{multline*}
    Since $\vec{D}(t_k)$ and $\nabla f(\vec{x}(t_k))$  converge when $k\rightarrow \infty$ it follows that $\lim_{k\rightarrow \infty}(\Bo\lambda(t_k),(\Bo\gamma_i(t_k))_{i\in \mathcal{A}_{\mathcal{X}}(\bec{x})})$ exists.

   Next we show that $\lim_{k\rightarrow \infty}\Bo\gamma_i(t_k)=0$ if $i\notin \mathcal{A}_{\mathcal{X}}(\bec{x})$.
    Since $\Bo \phi_i (\bec{x})<0$, there exists an open set $\mathcal{U}\subseteq \R^{p_2}$ containing $\bec{x}$ such that $\Bo \phi_i (\vec{x})<0$ for all $\vec{x}\in \mathcal{U}$.
    In particular, there exists $K\in \N$ such that $\Bo \phi_i (\vec{x}(t_k))<0$ for $k\geq K$.
  Therefore, there must exist $K{\in} \N $ such that $\Bo \gamma_i(t_k)=0$ for all $k \geq K$, since complementary slackness [$\Bo \gamma_i(t_k) \Bo \phi_i (\vec{x}(t_k))=0$] holds for all sufficiently large $k$ [compare with Lemma~\ref{assumption:conciseFON}].\end{proof}
%
%
%
%
%
%


A stronger version of Proposition~\ref{prop:ADMM-main-prop} is shown in the following corollary:
 \begin{corollary}
     If $\lim_{t\rightarrow}(\vec{x}(t),\vec{z}(t),\vec{y}(t))=(\bec{x},\bec{z},\bec{y})$, then $\bec{x}$ and $\bec{z}$ satisfy the FON conditions of Problem~\eqref{eq:main_problem_formulation}.
 \end{corollary}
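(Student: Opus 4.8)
The plan is to obtain this as an essentially immediate consequence of Proposition~\ref{prop:ADMM-main-prop}, under the standing hypotheses (Assumptions~\ref{assumption:ADPM-1}, \ref{assumption:on_sets_X_and_Z}, \ref{assumption:local_optimality}, and \ref{assumption:ADMMregularity}). First I would observe that convergence of the whole triple $(\vec{x}(t),\vec{z}(t),\vec{y}(t))$ to $(\bec{x},\bec{z},\bec{y})$ in particular yields $\lim_{t\to\infty}\vec{y}(t)=\bec{y}$, so the multiplier-sequence hypothesis of Proposition~\ref{prop:ADMM-main-prop} is satisfied. Since a convergent sequence has a unique accumulation point, $(\bec{x},\bec{z})$ is the \emph{only} limit point of $\{(\vec{x}(t),\vec{z}(t))\}_{t\in\N}$. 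Applying Proposition~\ref{prop:ADMM-main-prop} to this limit point then produces multipliers $\Bo\lambda,\Bo\gamma,\Bo\mu,\Bo\omega$ for which $\bec{x},\bec{z}$, together with $\bec{y}$, satisfy the FON conditions of Problem~\eqref{eq:main_problem_formulation}, which is exactly the claim.

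For a self-contained argument I would instead reverify the four conditions of Definition~\ref{def:def_first_order_necessary_condition} directly along the now-convergent sequence, reusing the machinery already in place. Primal feasibility follows verbatim: closedness of $\mathcal{X}\times\mathcal{Z}$ gives $(\bec{x},\bec{z})\in\mathcal{X}\times\mathcal{Z}$, and the telescoped $\vec{y}$-update together with convergence of $\vec{y}(t)$ forces $\vec{A}\vec{x}(t)+\vec{B}\vec{z}(t)-\vec{c}\to\vec{0}$, i.e.\ $\vec{A}\bec{x}+\vec{B}\bec{z}=\vec{c}$. For the remaining conditions I would invoke Lemma~\ref{assumption:conciseFON} along the whole sequence to obtain, for large $t$, KKT multipliers $(\Bo\lambda(t),\Bo\gamma(t))$ and $(\Bo\mu(t),\Bo\omega(t))$ for the $\vec{x}$- and $\vec{z}$-subproblems (regularity, Assumption~\ref{assumption:ADMMregularity}, enters only here, through Lemmas~\ref{remark:independence}--\ref{Lemma:In-ADMM-proof}); Lemma~\ref{Lemma:In-ADMM-proof} then guarantees these multiplier sequences converge, to $\Bo\lambda,\Bo\gamma,\Bo\mu,\Bo\omega$. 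Dual feasibility and complementary slackness pass to the limit by continuity and closedness, exactly as in the proof of Proposition~\ref{prop:ADMM-main-prop}.

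The only nontrivial item is the stationarity condition~4). Writing the $\vec{z}$-subproblem identity and substituting $\vec{y}(t-1)=\vec{y}(t)-\rho(\vec{A}\vec{x}(t)+\vec{B}\vec{z}(t)-\vec{c})$ gives stationarity of $g$ at $\bec{z}$ in the limit at once, since every term converges. For stationarity of $f$ at $\bec{x}$ the analogous identity carries the extra cross term $\rho\vec{A}\tran\vec{B}(\vec{z}(t)-\vec{z}(t-1))$, so the crux is to show $\vec{z}(t)-\vec{z}(t-1)\to\vec{0}$. This is precisely the step that the stronger hypothesis of the corollary trivializes: full convergence $\vec{z}(t)\to\bec{z}$ gives $\vec{z}(t)-\vec{z}(t-1)\to\vec{0}$ immediately, whereas in Proposition~\ref{prop:ADMM-main-prop} the same conclusion required the more delicate telescoping/convergent-series argument resting on convergence of $\vec{y}(t)$ alone. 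Consequently I expect no genuine obstacle here; the care needed is purely bookkeeping—matching the limits of $\nabla f,\nabla\Bo\psi,\nabla\Bo\phi$ against the convergent multiplier sequences—so once the cross term vanishes, condition~4) follows by passing the two subproblem stationarity identities to the limit.
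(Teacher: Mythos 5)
Your first paragraph is exactly the paper's own proof: the hypothesis makes $\vec{y}(t)$ convergent and forces the limit-point set of $\{(\vec{x}(t),\vec{z}(t))\}_{t\in\N}$ to be the singleton $(\bec{x},\bec{z})$, so Proposition~\ref{prop:ADMM-main-prop} applies directly. The additional self-contained re-verification you sketch is sound but unnecessary, since the corollary is intended as an immediate consequence of the proposition.
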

\noindent The corollary follows immediately because the hypothesis implies that the set $\mathcal{L}$ defined in Assumption~\ref{assumption:ADMMregularity} is a singleton.

 Technically, Proposition~\ref{prop:ADMM-main-prop} characterizes the solution of the ADMM algorithm applied on the possibly nonconvex problem~\eqref{eq:main_problem_formulation}. 
 More specifically, the proposition claims that under mild assumptions the solutions computed by ADMM satisfy the FON conditions for problem~\eqref{eq:main_problem_formulation}, if at every iteration, the subproblems are locally (or globally) solved and if the dual variables of ADMM converge. 

\BlueText{
Let us now show how Proposition~\ref{prop:ADMM-main-prop} can be used to completely characterize the convergence of the ADMM for a class of problems identified by the following assumption.
\begin{assumption} \label{ADMM-scalar-objs}
       $f,g : \R \rightarrow \R$, $\mathcal{X}{=}\mathcal{Y}{=}\R$, and the coupling constraint is $x{=}z$, i.e., $\vec{A}{=}1$ and $\vec{B}{=}{-}1$. 
        In addition, the derivatives $f'$ and $g'$ are $L$-Lipschitz continuous. 
\end{assumption}

\noindent
 The following corollary of Proposition~\ref{prop:ADMM-main-prop} shows that under Assumption~\ref{ADMM-scalar-objs} the ADMM always either converges or diverges to $\pm \infty$ and characterizes the convergence in terms of $z(0)$.
 \begin{corollary} \label{Prop:SCA-ADMM}
   Suppose Assumption~\ref{ADMM-scalar-objs} holds, $\rho>L$, and $y(0){=}g'(z(0))$.  
   Then  $$\lim_{k\rightarrow \infty }(x(k),z(k),y(k))=(z^{\star},z^{\star},g'(z^{\star})),$$
    where $z^{\star}$ is determined as follows:
   \begin{enumerate}[a)] 
       \item If $f'(z(0))+g'(z(0)) = 0$, then $z^{\star}=z(0)$.
       \item If $f'(z(0))+g'(z(0)) < 0$, then
                $$z^{\star}=\inf \{ z\geq z(0) | f'(z)+g'(z)=0\}.$$
       \item If $f'(z(0))+g'(z(0)) > 0$, then
                $$z^{\star}=\sup \{ z\leq z(0) | f'(z)+g'(z)=0\}.$$
   \end{enumerate}
 \end{corollary}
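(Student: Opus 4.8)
The plan is to reduce the two--dimensional ADMM recursion to the iteration of a single scalar map and to show this map is order--preserving precisely when $\rho>L$. First I would record a useful invariant. With $\vec A=1$, $\vec B=-1$, $\vec c=0$ the augmented Lagrangian is $L_\rho(x,z,y)=f(x)+g(z)+y(x-z)+(\rho/2)(x-z)^2$, and the $z$--update optimality condition reads $g'(z(t+1))-y(t)-\rho(x(t+1)-z(t+1))=0$. Substituting the $y$--update $y(t+1)=y(t)+\rho(x(t+1)-z(t+1))$ collapses this to $y(t+1)=g'(z(t+1))$; together with the initialization $y(0)=g'(z(0))$ this gives the invariant $y(t)=g'(z(t))$ for all $t$. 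Note that $\rho>L$ makes each subproblem strongly convex (its second derivative is at least $\rho-L>0$), so the minimizers exist, are unique, and are characterized by their first--order conditions (so Assumption~\ref{assumption:local_optimality} holds automatically).

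Using the invariant, the $x$--update condition becomes $\Phi(x(t+1))=G(z(t))$, where $\Phi(x):=f'(x)+\rho x$ and $G(z):=\rho z-g'(z)$ are both strictly increasing since $\rho>L$. As $\Phi(z(t))=f'(z(t))+\rho z(t)$, subtracting yields $\Phi(x(t+1))-\Phi(z(t))=-d(z(t))$ with $d(z):=f'(z)+g'(z)$, so $\operatorname{sign}(x(t+1)-z(t))=\operatorname{sign}(-d(z(t)))$. Likewise the $z$--update gives $\Psi(z(t+1))=g'(z(t))+\rho x(t+1)$ with $\Psi(z):=g'(z)+\rho z$, and comparing with $\Psi(z(t))$ shows $\operatorname{sign}(z(t+1)-z(t))=\operatorname{sign}(x(t+1)-z(t))=\operatorname{sign}(-d(z(t)))$. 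Eliminating $x(t+1)$ writes the iteration as $z(t+1)=T(z(t))$ with $T=\Psi^{-1}\circ F$ and $F(z)=g'(z)+\rho\,\Phi^{-1}(G(z))$, whose fixed points are exactly the zeros of $d$, i.e.\ the stationary points of $f+g$.

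The crux, and the step I expect to be the main obstacle, is proving $T$ (equivalently $F$) strictly increasing for \emph{every} $\rho>L$, not just for $\rho$ well above $L$. Bounding $\Phi^{-1}$, $G$, $f'$ and $g'$ by their Lipschitz/strong--monotonicity constants separately is too lossy and only yields $\rho>(1+\sqrt2)L$. The fix is to keep the coupling exact: for $z_1<z_2$ let $x_1<x_2$ be the corresponding $x$--updates and introduce the secant slopes $a=(g'(z_2)-g'(z_1))/(z_2-z_1)$ and $b=(f'(x_2)-f'(x_1))/(x_2-x_1)$, each in $[-L,L]$ by $L$--Lipschitz continuity. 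The relation $\Phi(x_i)=G(z_i)$ linearizes \emph{exactly} to $(\rho+b)(x_2-x_1)=(\rho-a)(z_2-z_1)$, and substituting into $F(z_2)-F(z_1)=(g'(z_2)-g'(z_1))+\rho(x_2-x_1)$ gives the clean identity $F(z_2)-F(z_1)=(z_2-z_1)(\rho^2+ab)/(\rho+b)$. Since $\rho>L\ge|a|,|b|$, we get $\rho+b>0$ and $\rho^2+ab\ge\rho^2-L^2>0$, so $F$, hence $T$, is strictly increasing.

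With $T$ order--preserving the three cases follow. If $d(z(0))=0$ then $z(0)$ is a fixed point and $z(t)\equiv z(0)$, giving (a). If $d(z(0))<0$ then $z(1)=T(z(0))>z(0)$, and monotonicity propagates $z(t+1)\ge z(t)$, so $\{z(t)\}$ is nondecreasing and therefore either diverges to $+\infty$ or converges. If it converges to $\ell$, continuity of $T$ makes $\ell$ a fixed point, i.e.\ $d(\ell)=0$; moreover $z^\star:=\inf\{z\ge z(0)\mid d(z)=0\}$ is itself a fixed point with $z^\star>z(0)$, so order--preservation gives $z(t)<z^\star$ for all $t$ by induction, whence $\ell\le z^\star$, and minimality of $z^\star$ forces $\ell=z^\star$ (if the defining set is empty the sequence diverges, consistent with the dichotomy). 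Case (c) is symmetric. Finally $x(t+1)=\Phi^{-1}(G(z(t)))\to\Phi^{-1}(G(z^\star))=z^\star$ and $y(t)=g'(z(t))\to g'(z^\star)$, so $(x(t),z(t),y(t))\to(z^\star,z^\star,g'(z^\star))$, and Proposition~\ref{prop:ADMM-main-prop} confirms this limit satisfies the FON conditions of~\eqref{eq:main_problem_formulation}.
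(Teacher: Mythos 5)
Your proposal is correct, and it takes a genuinely different route from the paper's proof. Both start identically: the invariant $y(t)=g'(z(t))$, strong convexity of the subproblems for $\rho>L$, and the first-order characterizations of the $x$- and $z$-updates. From there the paper argues directly on the iterates: it shows $z(t)<x(t{+}1)<z^{\star}$ and $z(t)<z(t{+}1)<z^{\star}$ by contradiction arguments pitting the update equations against $L$-Lipschitz continuity (summing pairs of strict inequalities to show $f'+g'<0$ on the relevant intervals), obtains monotone convergence of $z(t)$, and then \emph{invokes Proposition~\ref{prop:ADMM-main-prop}} to conclude the limit must be a stationary point, hence equals $z^{\star}$. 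You instead collapse the recursion into a single scalar map $z(t{+}1)=T(z(t))$ with $T=\Psi^{-1}\circ F$, prove $T$ is strictly increasing for \emph{every} $\rho>L$ via the exact secant identity $F(z_2)-F(z_1)=(z_2-z_1)(\rho^2+ab)/(\rho+b)$ with $|a|,|b|\leq L$ (correctly noting that separately bounding the composed maps is lossy and only yields $\rho>(1+\sqrt{2})L$), and then finish with classical monotone fixed-point reasoning: order-preservation confines the iterates to $[z(0),z^{\star}]$, and continuity of $T$ forces any finite limit to be a fixed point, i.e.\ a zero of $f'+g'$. What your approach buys is self-containment -- you identify the limit without appealing to Proposition~\ref{prop:ADMM-main-prop} at all (your final sentence citing it is decorative, not load-bearing), and the monotonicity of $T$ cleanly isolates why the threshold $\rho>L$ is the right one. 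What the paper's approach buys is that the result genuinely reads as a corollary of its main ADMM theorem, reusing machinery already established; your argument, while arguably more transparent, is a standalone one-dimensional analysis whose secant-slope trick does not obviously extend beyond the scalar setting -- though, as the paper itself notes, neither does its own argument.
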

 \begin{proof}
   We start by writing the steps of the ADMM in a more convenient form.
 Note that  $ g'(z(t{+}1)) +y(t)+\rho(x(t{+}1)-z(t{+}1)) = 0$, from the optimality  conditions of $z(t{+}1)$ at the $\vec{z}$-update.
 This combined with the $\vec{y}$-update yields (i) $y(t) = g'(z(t))$. 
 Moreover, because $f'$ and $g'$ are L-Lipschitz continuous, we have that (ii) the functions $L_{\rho}(\cdot,z(t) , y(t))$ and $L_{\rho}(x(t),\cdot, y(t))$, associated with the $\vec{x}$- and $\vec{z}$- updates are strongly convex for all $\rho>L$. 

   From (i) and (ii), we get that $x(t{+}1)$ is the unique solution to
   \begin{align}
      0=&f'(x)+g'(z(t))+ \rho(x-z(t)), \label{eq:ADMM-SCA-proof-x-update}
    \end{align}
    and $z(t{+}1)$ is the unique solution to
    \begin{align}
    0=&g'(z)-g'(z(t))- \rho(x(t{+}1)-z). \label{eq:ADMM-SCA-proof-z-update}
    \end{align}
  In the sequel, we show each case a), b), and c) separately.

      a)   If $f'(z(0))+g(z(0))=0$ then $x(t)=z(0)$ and $z(t)=z(0)$ are clearly the unique solutions to~\eqref{eq:ADMM-SCA-proof-x-update} and~\eqref{eq:ADMM-SCA-proof-z-update}, respectively, for all $t\geq 1$.  The result follows.

      b)  In the sequel, we show that $z(t{+}1){>}z(t)$ and $z(t){<}z^{\star}$ for all $t\in \N$, implying that $\bar{z}=\lim_{t\rightarrow \infty} z(t)$ exists (it is possible that $\bar{z}{=}\infty$ when $z^{\star}{=}\infty$). 
                Since the interval $\mathcal{U}= [z(0),z^{\star}]$ (or $\mathcal{U}=[z(0),z^{\star}[$ when $z^{\star}=\infty$) 
                 is a closed set, $\bar{z}\in \mathcal{U}$. 
                Moreover, by Proposition~\ref{prop:ADMM-main-prop},  $(x(t),z(t),y(t))$ can only converge to a point satisfying the first order necessary conditions, i.e., to a point $(z,z,g'(z))$ with $f'(z)+g'(z)=0$. 
          When $z^{\star}<\infty$, the only $z\in \mathcal{U}$ satisfying the necessary conditions is $z^{\star}$ and when $z^{\star}=\infty$ no  $z\in \mathcal{U}$ satisfies the necessary conditions. 
           Hence, we can conclude that $\bar{z}=z^{\star}$.         

          We show that $z(t{+}1){>}z(t)$ and $z(t{+}1)<z^{\star}$ for all $z(t)\in [z(0),z^{\star}[$, 
          but as an intermediary step we first show that $x(t{+}1){>}z(k)$ and $x(t{+}1)<z^{\star}$ for all $z(t)\in [z(0),z^{\star}[$.   
          To see that $x(t{+}1){>}z(k)$, we note that $x(t{+}1)\leq z(k)$ contradicts the $L$-Lipschitz continuity of $f'$. 
          In particular,  $x(t{+}1) \leq z(t)$ implies that $\rho|x(t{+}1)-z(t)| < |f'(x(t{+}1))-f'(z(t))|$, which is seen by the following inequality
          \begin{align}
            \rho(z(t)-x(t{+}1))<f'(x(t{+}1))-f'(z(t)) , \label{eq:ADMM-SCA-proof-zzz}
          \end{align}
            which is obtained by combining~\eqref{eq:ADMM-SCA-proof-x-update} and ${-}f'(z(t)){>}g'(x(t))$ and rearranging.
          To see that  $x(t{+}1)<z^{\star}$ we note that 
          \begin{align}
              f'(x){<} {-}(g'(z(t)){+}\rho(x{-}z(t))  ),   \forall x{\in} [z(t),x(t{+}1)[, \label{eq:ADMM-SCA-proof.xxx} \\
             g'(x)<g'(z(t))+\rho(x-z(t)),  ~\forall  x{\in} ]z(t),x(t{+}1)], \label{eq:ADMM-SCA-proof.zzz}
          \end{align}
           where~\eqref{eq:ADMM-SCA-proof.xxx} comes from that $x(t{+}1)$ is the unique solution of~\eqref{eq:ADMM-SCA-proof-x-update} and $f'(z(t)){<}{-}g'(z(t)){-}\rho(z(t){-}z(t))$ and~\eqref{eq:ADMM-SCA-proof.zzz} comes from that $\rho{>}L$ and $g'$ is $L$-Lipschitz continuous.
           Summing~\eqref{eq:ADMM-SCA-proof.xxx} and~\eqref{eq:ADMM-SCA-proof.zzz} and using the continuity of $f'$ and $g'$ shows that $f'(x)+g'(x)<0$ for all $x{\in} [z(t),x(t{+}1)]$ and hence $x(t{+}1){<}z^{\star}$. 
  
     We now show that $z(t{+}1){>}z(t)$ and $z(t){<}z^{\star}$.
     To see that $z(t{+}1){>}z(t)$, we note that $z(t{+}1){\leq} z(t)$ contradicts the $L$-Lipschitz continuity of $g'$. 
     In particular,  $z(t{+}1) {\leq} z(t)$ implies that $\rho|z(t{+}1){-}z(t)| < |g'(x(t{+}1)){-}g'(z(t))|$, which is seen by that if $z(t{+}1) {\leq} z(t)$ then
     \begin{align}
        g'(z(t{+}1))-g'(z(t)) &= \rho(x(t{+}1)-z(t{+}1))  \label{eq:ADMM-SCA-proof-z-largerthan-z1} \\
                                     &> \rho( z(t) - z(t{+}1) )>0 \label{eq:ADMM-SCA-proof-z-largerthan-z2}
     \end{align}
     where~\eqref{eq:ADMM-SCA-proof-z-largerthan-z1} comes by rearranging~\eqref{eq:ADMM-SCA-proof-z-update} and~\eqref{eq:ADMM-SCA-proof-z-largerthan-z2} comes by assuming that   $z(t{+}1) \leq z(t)$ and using that $x(t{+}1)>z(t)$. 
     Hence, we can conclude that $z(t{+}1)>z(t)$.
     To see that  $z(t{+}1)<z^{\star}$ we note that if $z(t{+}1)\leq x(t{+}1)$ then we are done since $x(t{+}1)<z^{\star}$, otherwise we have that 
          \begin{align}
       \hspace{-0.4cm}       f'(z)&{<} {-}(g'(z(t)){+}\rho(x(t{+}1){-}z)  ),   {\forall} z{\in}]x(t{+}1),z(t{+}1)], \label{eq:ADMM-SCA-proof-sec-fff} \\
       \hspace{-0.3cm}       g'(z)&{<}g'(z(t))+\rho(x(t{+}1){-}z),  \forall z{\in} [x(t{+}1),z(t{+}1)[, \label{eq:ADMM-SCA-proof-sec-ggg}
          \end{align}
          where~\eqref{eq:ADMM-SCA-proof-sec-fff} comes from using that $\rho>L$ and $f'$ is $L$-Lipschitz continuous together with the inequalities $z\geq x(t{+}1)$  and $f'(x(t{+}1))<-g'(z(t))$ which follows from~\eqref{eq:ADMM-SCA-proof-x-update}
          and~\eqref{eq:ADMM-SCA-proof-sec-ggg} comes by that $z(t{+}1)$ is the unique solution of~\eqref{eq:ADMM-SCA-proof-z-update} together with that $g'(z(t))<g'(z(t))+\rho(x(t{+}1)-z(t)$ and $z(t)<z(t{+}1)$.
       Summing~\eqref{eq:ADMM-SCA-proof-sec-fff} and~\eqref{eq:ADMM-SCA-proof-sec-ggg} and using the continuity of $f'$ and $g'$ shows that $ f'(z)+ g'(z)<0$ for all $z\in [x(t{+}1),z(t{+}1)]$, implying that $z(t{+}1)<z^{\star}$.

 c) Follows from symmetric arguments as those used for showing b) and is thus omitted.
 \end{proof}
  Informally, Corollary~\ref{Prop:SCA-ADMM} shows that under Assumption~\ref{ADMM-scalar-objs} and $\rho>L$ the ADMM converges to the closest stationary point of $z(0)$ in the direction where $f+g$ is decreasing. 
 For example, when $f(x)=\cos(x)$, $g(z)=\sin(z)$, and $\rho{>}1$ then $\lim_{t \rightarrow \infty}(x(t),z(t),y(t))=(z^{\star},z^{\star},\cos(z^{\star}))$ where $z^{\star}=z(1)$ if $z(1)\in\{2\pi n+ \pi/4 | n\in \mathbb{Z} \}$ and $z^{*}=2n\pi+5\pi/4$ if $z(1)\in ]2n\pi+\pi/4,2(n+1)\pi+\pi/4[$ for $n\in \mathbb{Z}$. 
  If there is no stationary point in the direction where $f+g$ is decreasing then the ADMM diverges to $\pm \infty$, e.g., when $f(x)=g(x)=-x^2$ and $\rho > 2$ then 
  $z^{\star}=0,-\infty,\infty$ for $z(1)=0$, $z(1)<0$, and $z(1)>0$, respectively. 

 The challenge in multidimensional case is that we need to know the direction towards the stationary point.  Such a direction is easily obtained in the monodimensional, as suppose to the multidimensional case.

}




 The next section demonstrates the potential of the proposed ADLM approaches (see Sections~\ref{sec:ADPM} and \ref{sec:ADMM}) in a problem of great practical relevance.

\section{ Application: Cooperative Localization in Wireless Sensor Networks}\label{sec:general applications}

 In this section, we use the ADLM methods to design distributed algorithms for Cooperative Localization (CL)~\cite{Patwari_2005} in wireless sensor networks. 


 Consider an undirected graph $(\mathcal{N},\mathcal{E})$, where $\mathcal{N}= \{1,\cdots, N \}$ is a set of nodes embedded in $\R^2$ 
 \BlueText{and $\mathcal{E}\subseteq \mathcal{N} \times \mathcal{N}$ is a set of edges. }
 Let $\mathcal{N}=\mathcal{S}\cup\mathcal{A}$, where $\mathcal{S}{=}\{1,{\cdots}, S\}$ is the set of sensors with unknown locations and $\mathcal{A}{=}\{ S{+}1,{\cdots}, N\}$ is the set of anchors with known locations. We denote the location of node $n{\in} \mathcal{A}$ by $\vec{a}_n$ and an estimate of the location of node $n{\in} \mathcal{S}$ by~$\vec{z}_n$.

%
%
%
Suppose the measurements of the squared\footnote{Using the square ensures that the objective function of~\eqref{eq:centralized_localization_problem} is continuously differentiable (compare with Assumption~\ref{assumption:ADPM-1}).} distance between two nodes $n,m\in \mathcal{N}$,  denoted by $d_{n,m}^2$, are available if and only if $(n,m)\in\mathcal{E}$. 
 The additive measurement errors are assumed to be independent and Gaussian distributed with zeros mean and variance $\sigma^2$.
 Then the CL problem consists in finding the maximum likelihood estimate of $(\vec{z}_n)_{n\in \mathcal{S}}$ by solving the following problem:
\begin{equation} \label{eq:centralized_localization_problem}
  \begin{aligned}
    & \underset{\vec{z}_1,\cdots, \vec{z}_S\in \R^{2}}{\mbox{minimize}}ß
    &&   {\sum_{n\in \mathcal{S}}} {\Big(}
        {\sum_{ \substack{ m \in \mathcal{S}_n   }}} {\big|} d_{n,m}^2 {-} || \vec{z}_n {-} \vec{z}_m  ||^2 \big|^2   \\
    &&& \hspace{5mm} {+} {2} {\sum_{ \substack{ m \in \mathcal{A}_n}}}\big|  d_{n,m}^2 {-} || \vec{z}_n {-} \vec{a}_m  ||^2 \big|^2  \Big),
  \end{aligned}
\end{equation}
 where $\mathcal{S}_n = \{ m {\in} \mathcal{S} | (n,m) {\in} \mathcal{E} \}$, $\mathcal{A}_n =\{ m {\in} \mathcal{A} {|} (n{,}m) \in  \mathcal{E} \}$ \BlueText{and the coefficient $2$ in front of the second term of the sum comes from that $n\in S$ appears twice in the sum. }
Note that Problem~\eqref{eq:centralized_localization_problem} is NP-hard~\cite{Aspnes_2006}.


To enable distributed implementation (among the nodes) of the proposed ADLM approaches, let us first equivalently reformulate problem~\eqref{eq:centralized_localization_problem} into a general consensus form~\cite[Section~7.2]{Boyd2011_ADMM}. We start by introducing at each node $n\in \mathcal{N}$, a local copy $\vec{x}_n$ of $(\vec{z}_m)_{m\in \bar{\mathcal{S}}_n}$, where $\bar{\mathcal{S}}_n=\mathcal{S}_n\cup \{n \}$. More specifically, we let $\vec{x}_{n} = (\vec{x}_{n,m})_{m\in \bar{\mathcal{S}}_n }$, where $\vec{x}_{n,m}\in\R^2$ denotes the local copy of $\vec{z}_m$ at node $n$. To formally express the consistency between $\vec{x}_n$ and $\vec{z}=(\vec{z}_1,\cdots, \vec{z}_S)$, we introduce the matrix $\vec{E}_n\in \R^{2|\bar{\mathcal{S}}_n|\times 2S}$, which is a $|\bar{\mathcal{S}}_n|\times S$ block matrix of $2\times 2$ blocks. In particular, the $i$-th, $j$-th block of $\vec{E}_n$ is given by $(\vec{E}_n)_{i,j}=\vec{I}_2$, if $\vec{x}_{n,j}$ is the $i$-th block of the vector $\vec{x}_n$ and $(\vec{E}_n)_{i,j}=\vec{0}$ otherwise.
 Then Problem~\eqref{eq:centralized_localization_problem}  is equivalently given by
\begin{equation} \label{eq:distributed_localization_problem}
  \begin{aligned}
    & \underset{\vec{x}, \vec{z}}{\text{minimize}}
    & & \sum_{n\in \mathcal{N}} f_n(\vec{x}_n), \\
    & \text{subject to}
    && \vec{x}_{n}= \vec{E}_n \vec{z}, \mbox{ for all $n\in \mathcal{N}$},
  \end{aligned}
\end{equation}
 where $\vec{x}=(\vec{x}_1 \cdots, \vec{x}_N)\in \R^{\sum_{n\in \mathcal{N}} 2|\bar{\mathcal{S}}_n|}$, $\vec{z}\in \R^{2S}$, and
\be\label{eq:decesion_variables}\nonumber
f_n(\vec{x}_n){=} \hspace{-1mm}\left\{ \begin{array}{ll}
  \hspace{-2mm}\displaystyle\ssum{m\in  \mathcal{S}_n} \big|  d_{n,m}^2 {-} || \vec{x}_{n,n} {-} \vec{x}_{n,m}  ||^2 \big|^2 \\
   \hspace{4mm} {+} \hspace{-2mm}\displaystyle\ssum{m\in \mathcal{A}_n}\hspace{-2mm}\big|  d_{n,m}^2 {-} || \vec{x}_{n,m} {-} \vec{a}_m  ||^2 \big|^2, \hspace{-1mm} & \ \ \text{ if } n\in \mathcal{S}\\
  \hspace{-2mm}\displaystyle\ssum{ m \in \mathcal{S}_n }\big|  d_{n,m}^2 - || \vec{x}_{n,m} - \vec{a}_n  ||^2 \big|^2,  & \ \ \text{ if } n\in \mathcal{A}.
   \end{array} \right.
\ee
%
%

 Problem~\eqref{eq:distributed_localization_problem} fits the form of Problem~\eqref{eq:main_problem_formulation} and proposed ADLM approaches can readily be applied.
 The augmented Lagrangian of problem~\eqref{eq:distributed_localization_problem} can be written~as
\begin{equation*}
     L_{\rho}(\vec{x},\vec{z},\vec{y}) {=}  \hspace{-2mm}\sum_{n\in \mathcal{N}}\hspace{-1mm} f_n(\vec{x}_n){+} \vec{y}_n\tran (\vec{x}_n{-}\vec{E}_n\vec{z}) {+} \frac{\rho}{2}|| \vec{x}_n{-}\vec{E}_n\vec{z}||^2,
\end{equation*}
 where $\vec{y}=(\vec{y}_1,\cdots, \vec{y}_n)$ is the Lagrangian multiplier.
 Note that the variables $\vec{x}$ and $\vec{y}$ are separable among
 $n\in\mathcal{N}$.
 The resulting distributed-ADLM is as follows.

{\tableANDalgSize

    \noindent\rule{\linewidth}{0.3mm}
\\
\emph{Algorithm 3: \ \textsc{Distributed Alternating Direction Lagrangian Method (\small{D-ADLM}) }}

\vspace{-0.2cm}
\noindent\rule{\linewidth}{0.3mm}
\begin{enumerate}
   \item \label{Alg:DADLM-initialization-step}  \textbf{Initialization:} Set $t=0$ and put initial values to $\vec{z}(t)$, $\vec{y}(t)$, and $\rho(t)$. 
   \item \label{Alg:DADLM-x-step} \textbf{Subproblem:}  Each node $n\in \mathcal{N}$ solves
                                                      \begin{equation} \label{eq:ADLM_for_CL_in_WSN_x_update}
                                                              \vec{x}_n(t+1)=\underset{\vec{x}_n \in \R^{|\mathcal{S}_n|}}{\text{argmin}}~  L_{\rho(t)}(\vec{x}_n,\vec{z}(t),\vec{y}(t))
                                                      \end{equation}
   \item \label{Alg:DADLM-z-step} \textbf{Communication/Averaging:}
                                                     $\vec{z}(t+1)$ is given by $\mbox{argmin}_{\vec{z}}~  L_{\rho(t)}(\vec{x}_n(t{+}1),\vec{z},\vec{y}(t))$, i.e.,
                                                    \begin{multline}\label{eq:ADLM_for_CL_in_WSN_z_update}
                                                      \hspace{-4mm}       \vec{z}_n(t {+} 1) {=} \frac{1}{|\mathcal{S}_n|}  \sum_{i\in \mathcal{S}_n }  \vec{E}_{i,n}\tran \left( \vec{x}_i(t{+}1){+} \frac{\vec{y}_i(t)}{\rho(t)}\right),
                                                    \end{multline}
      for $n\in \mathcal{S}$, where $\vec{E}_{i,n}$ is the column $n$ of the block matrix $\vec{E}_i$. 

    \item   \label{Alg:DADLM-y-step} \textbf{Local parameter update:} Each node $n\in \mathcal{N}$ updates its local parameters $\rho(t)$ and $\vec{y}(t)$ accordingly.
   \item \label{Alg:DADLM-stop} \textbf{Stopping criterion:} If stopping criterion is met terminate, otherwise set $t=t+1$ and go to step~2.
\end{enumerate}
\vspace{-3mm}
\rule{\linewidth}{0.3mm}

}

\noindent Note that the D-ADLM can be carried out either as an ADPM or as an ADMM by performing $\rho(t)$ and $\vec{y}(t)$ updates at step~4 accordingly (compare with step 4 of ADPM and ADMM algorithms).
\BlueText{
 In particular, in ADPM, all the nodes know the value of $\rho(t)$ for each $t$ and the nodes can update $\vec{y}_n(t)$, for all $n\in \mathcal{N}$, as they  wish, as long as the sequence $\vec{y}_n(t)$ is bounded.
 In ADMM, all the nodes $n$ know the value of $\rho$ and update $y_n$ according to
 \begin{equation} \label{secNum:dualUp}
   \vec{y}_n(t{+}1)=\vec{y}_n(t)+ \rho(t)(\vec{x}_n(t{+}1)-\vec{E}_n\vec{z}(t{+}1)).
 \end{equation}
}

As indicated in the first step, the initial setting of the algorithm should be agreed on among the nodes. Other steps can be carried out in a distributed manner with local message exchanges.
Note that~\eqref{eq:ADLM_for_CL_in_WSN_z_update} is simply the average of the local copies of $\vec{z}_n$ and the corresponding dual variables [scaled by $\rho(t)$], which can be performed by employing standard gossiping algorithms, e.g.,~\cite{Boyd_2006}. Moreover, the last step requires a mechanism to terminate the algorithm.  A natural stopping criterion is to fix the number of iterations, which requires no coordination among the nodes except at the beginning. In order to control the accuracy level $\epsilon$ of the coupling constraints,  one can, for example, terminate the algorithm when ${\max_{n\in\mathcal{N}}}|| \vec{x}_n(t){-}\vec{E}_n\vec{z}(t)|| {<} \epsilon$.  This, can be accomplished with an additional coordination among the~nodes.

\BlueText{
 We compare D-ADLM with the following distributed gradient descent algorithm.

{\tableANDalgSize
    \noindent\rule{\linewidth}{0.3mm}
\\
\emph{Algorithm 4: \ \textsc{Distributed Gradient Descent (\small{D-GD}) }}

\vspace{-0.2cm}
\noindent\rule{\linewidth}{0.3mm}
\begin{enumerate}
   \item \label{Alg:DADLM-initialization-step}  \textbf{Initialization:} Set $t{=}0$ and initialize $\rho(t)$, $\vec{z}(t)$, and $\bar{x}_n(t)=\vec{E}_n \vec{z}(t)$ for all $n\in \mathcal{N}$.
   \item \label{Alg:DADLM-x-step} \textbf{Subproblem:}  Each node $n\in \mathcal{N}$ solves
                                                      \begin{equation} \label{eq:DGD_for_CL_in_WSN_x_update}
                                                              \vec{x}_n(t+1)= \bec{x}_n(t) - \frac{1}{\rho(t)} \nabla f_n (\bec{x}_n(t) )
                                                      \end{equation}
   \item \label{Alg:DADLM-z-step} \textbf{Communication/Averaging:}
            Each senor $n\in \mathcal{S}$ finds the average estimation of its localization by communicating with neighbors:
                                                    \begin{multline}\label{eq:DGD_for_CL_in_WSN_z_update}
                                                          \vec{z}_n(t {+} 1) {=} \frac{1}{|\mathcal{S}_n|}  \sum_{i\in \mathcal{S}_n }  \vec{E}_{i,n}\tran  \vec{x}_i(t{+}1),
                                                    \end{multline}
       here $\vec{E}_{i,n}$ is the column $n$ of the block matrix $\vec{E}_i$.
      Set $\bec{x}_n(t{+}1)=\vec{E}_n \vec{z}(t{+}1)$, i.e., the average of the components pertaining to $n\in \mathcal{S}$.

    \item   \label{Alg:DADLM-y-step} \textbf{Local parameter update:} Each node $n{\in} \mathcal{N}$ updates  $\rho(t)$.
   \item \label{Alg:DADLM-stop} \textbf{Stopping criterion:} If stopping criterion is met terminate, otherwise set $t=t+1$ and go to step~2.
\end{enumerate}
\vspace{-3mm}
\rule{\linewidth}{0.3mm}
}

 Note that D-GD performs almost the same steps as D-ADLM. 
 The main difference is in step 2): \eqref{eq:ADLM_for_CL_in_WSN_x_update} in ADLM is a solution to an optimization problem while \eqref{eq:DGD_for_CL_in_WSN_x_update} in D-GD is a gradient descent step. 
  In particular, the required communication is the same for both algorithms.
 Therefore, D-GD provides a fair comparison to the D-ADLM.

  Let us next test the D-ADLM on a CL problem. 

}

\BlueText{
\subsection{Numerical Results}

\begin{figure}[t]
\centering
\subfigure[Residuals]
{\includegraphics[width=0.23\textwidth]{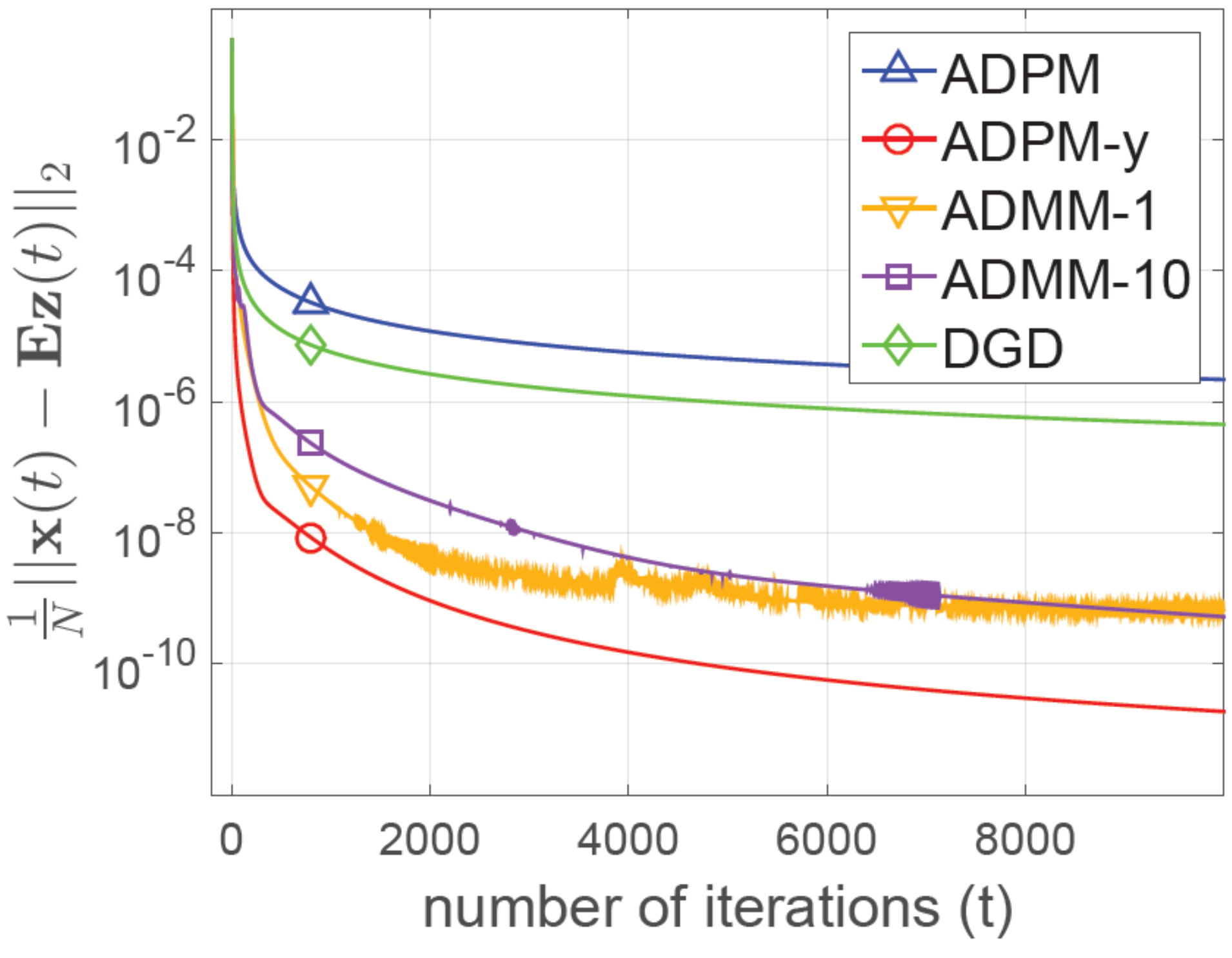}
\label{fig:Localization_res}}
\subfigure[Gradient of the objective function]
{\includegraphics[width=0.23\textwidth]{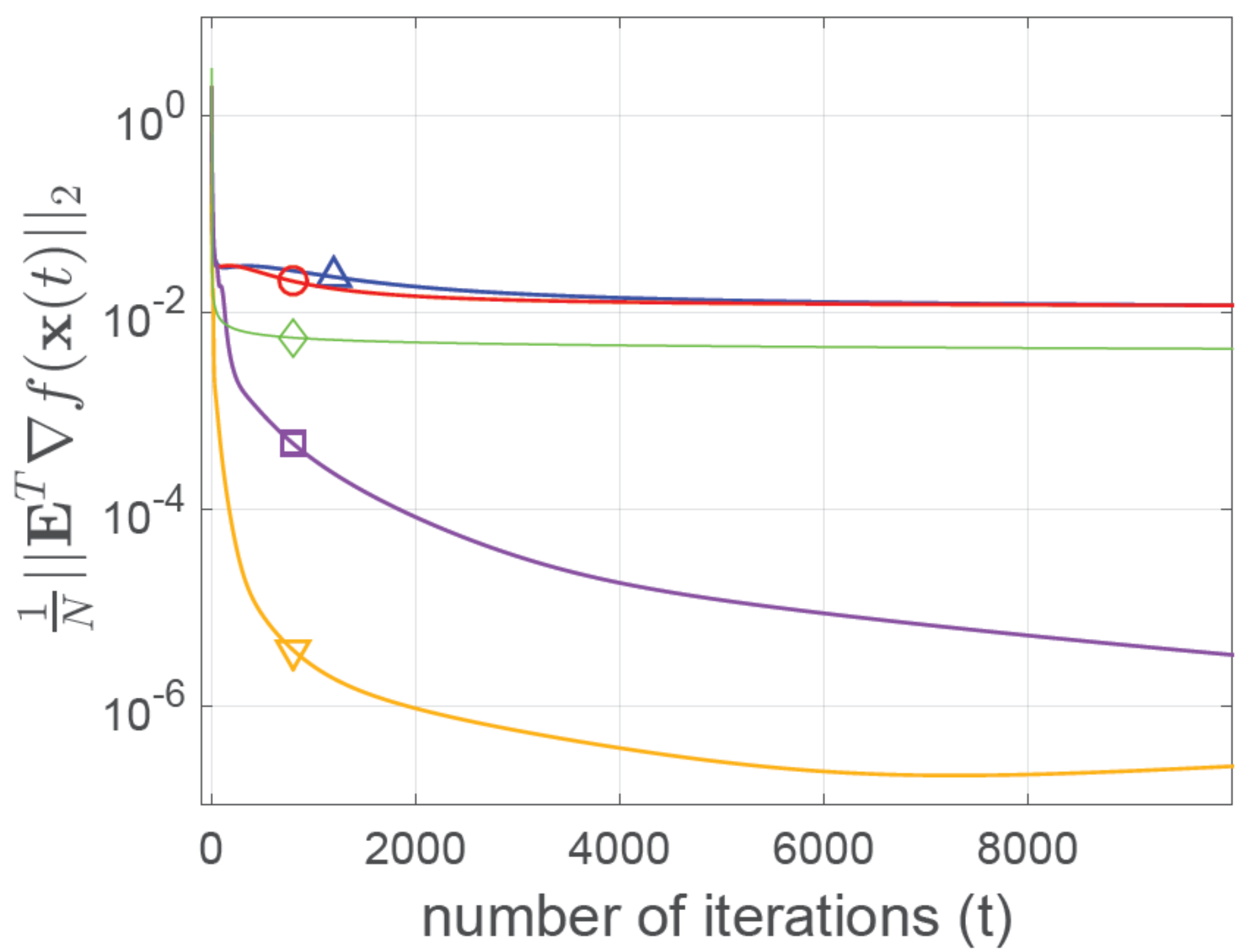}
\label{fig:Localization_grad}}\vspace{-2mm}
\subfigure[Dual variables]
{\includegraphics[width=0.23\textwidth]{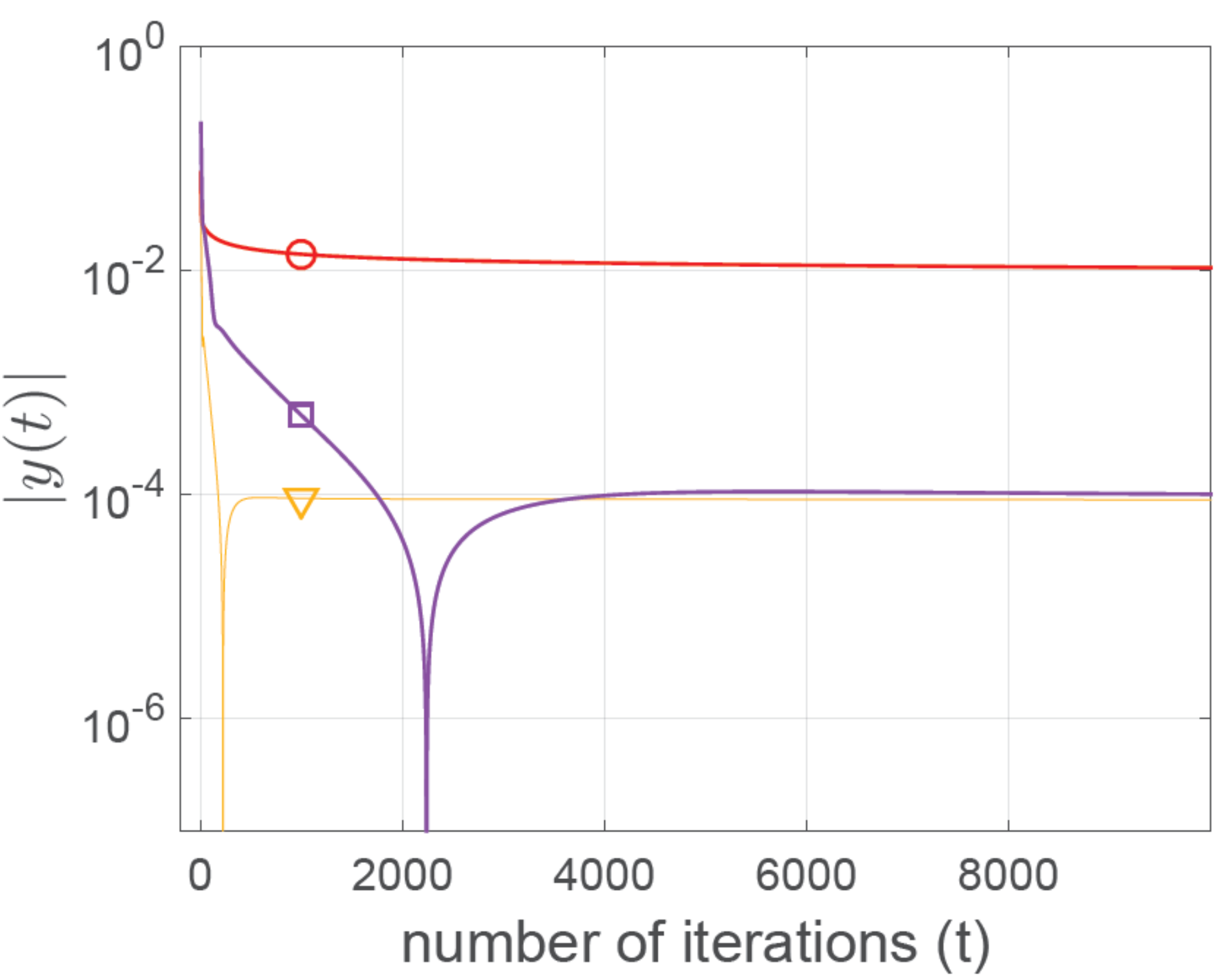}
\label{fig:Localization_lam}}
\subfigure[Objective function]
{\includegraphics[width=0.23\textwidth]{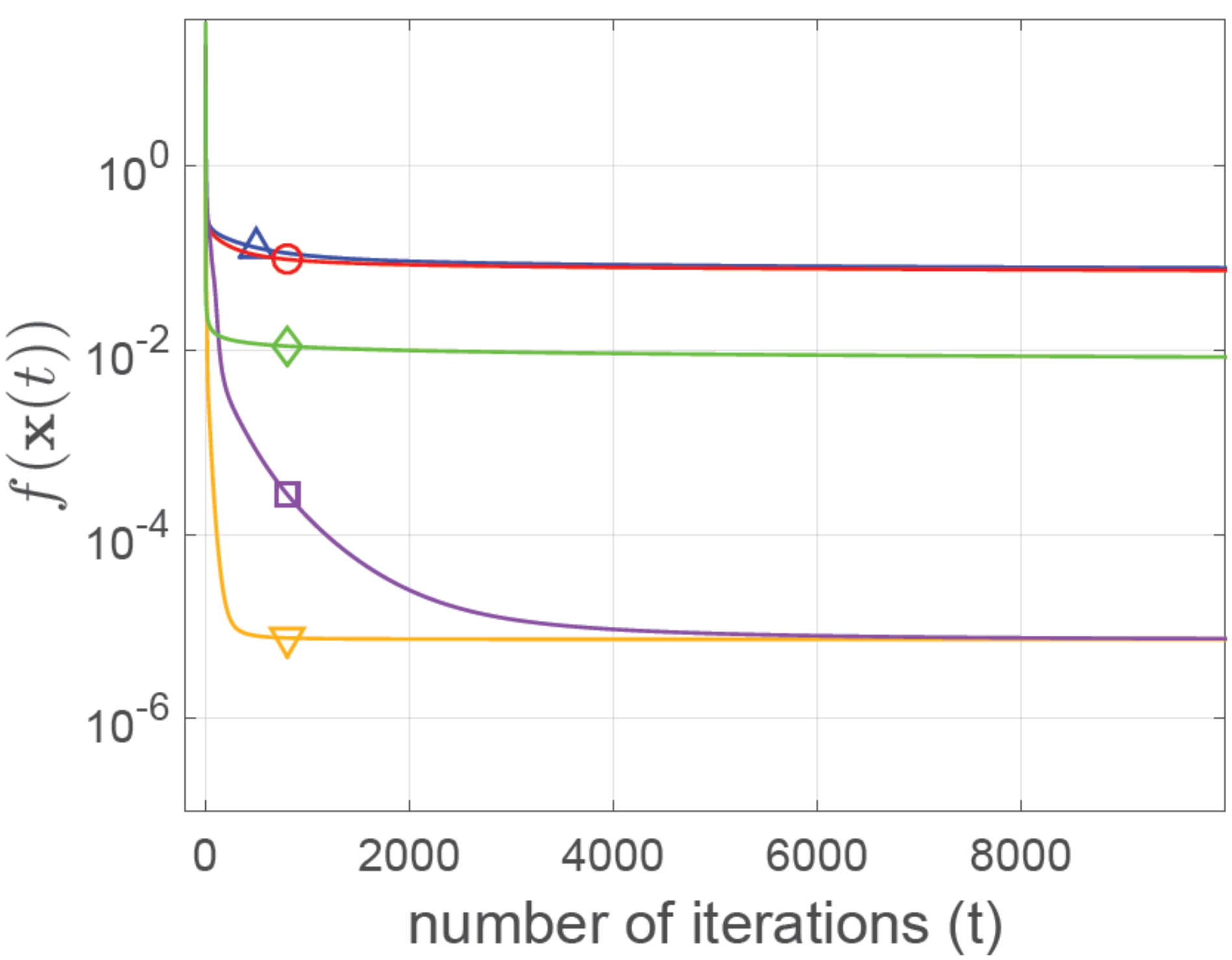}
\label{fig:Localization_obj}}\vspace{-2mm}
\caption{
 The results of running all 5 algorithms on the test network.  
}
\label{fig:Localization_results}
\vspace{-4mm}
\end{figure}

\begin{figure}[t]
\centering{
\includegraphics[width=0.5\textwidth]{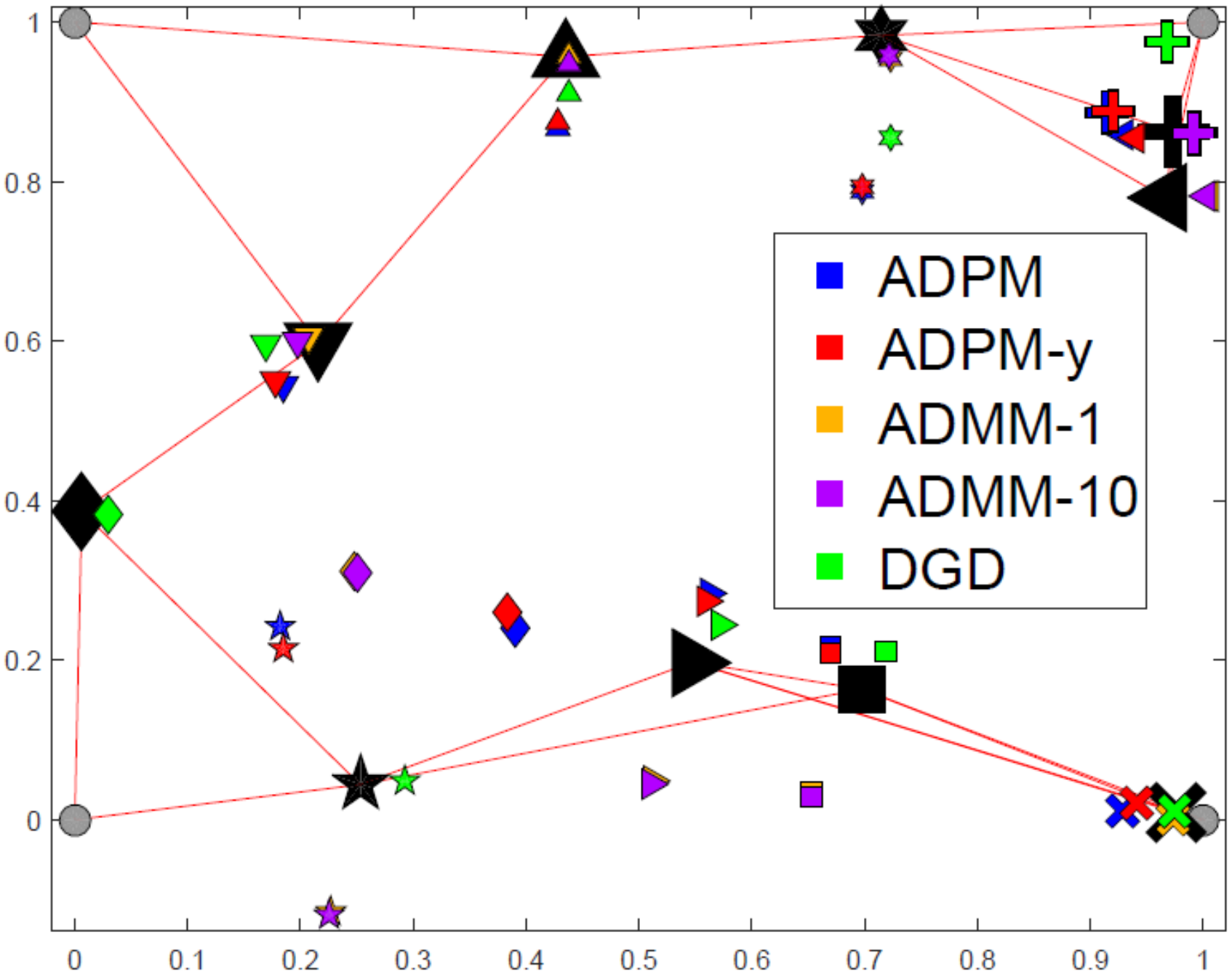} }
\caption{
 The position estimate each algorithm converges to. 
}
\label{fig:Localization_results2}
\vspace{-4mm}

\end{figure}

 We consider a network with $S=10$, $A=4$.
 The 4 anchors are located at $(0,0),$ $(0,1),$ $(1,0),$ and $(1,1)$.
  The senors' are positioned at uniform random in $[0,1]{\times} [0,1]$.
  There is an edge between two nodes $n,m {\in} \mathcal{N}$ if and only if the Euclidean distance between those is less than $0.5$.
  We have $\sigma^2{=} 0.05D$, where $D$ is the average squared distance between distinct nodes $(n,m){\in} \mathcal{E}$.
  We consider the following algorithm settings:

\begin{center} \tableANDalgSize
\begin{tabular}{ |c|c|c|c| }
  \hline
   name & type & dual update & $\rho$ \\
  \hline
  \texttt{ADPM}       & ADPM    & None      & $\rho(t)=t$\\
  \texttt{ADPM-y}    & ADPM    &  \eqref{secNum:dualUp} & $\rho(t)=t$\\
 \texttt{ADMM-1}    & ADMM    & \eqref{secNum:dualUp}  & $\rho =1$   \\
 \texttt{ADMM-10}  & ADMM    & \eqref{secNum:dualUp}  &  $\rho =10$   \\
 \texttt{DGD}          & D-GD       & None     & $\rho(t)=t$\\
\hline
\end{tabular}
\end{center}
 where the first column identifies each setting,
  the second column indicates the algorithm used, 
  the third column indicates whether the dual variable~update 
 is used or if no dual variable update is used, i.e., $\vec{y}(t){=}\vec{0}$,
 the forth column indicates the penalty/steps size used.
 We initialize the algorithms as $\vec{z}_n(0){=}(0.5,0.5)$ for all $n{\in} \mathcal{S}$.
 When the dual variable is updated we initialize it as~$\vec{y}(0){=}\vec{0}$.



 Fig.~\ref{fig:Localization_results} depicts the results, where we have compactly written $\vec{x}=(\vec{x}_1,\cdots,\vec{x}_n)$ and $\vec{E}=(\vec{E}_1,\cdots,\vec{E}_n)$. 
 Figs~\ref{fig:Localization_res} and~\ref{fig:Localization_grad} depict scaled versions of $||\vec{x}(t)-\vec{E}\vec{z}(t)||$, the network consensus, and $||\vec{E}\tran \nabla f(\vec{x}(t))||$, the gradient of the objective function, respectively,  as a function of iterations $t$.  
 Together $||\vec{x}(t)-\vec{E}\vec{z}(t)||$ and $||\vec{E}\tran \nabla f(\vec{x}(t))||$ comprise the FON conditions of Problem~\eqref{eq:distributed_localization_problem}, i.e., when both quantities converge to zero the FON conditions is asymptotically reached.
  Both Figs~\ref{fig:Localization_res} and~\ref{fig:Localization_grad}  demonstrate a decreasing trend for all algorithms.
  In Fig.~\ref{fig:Localization_grad}, \texttt{DGD} and \texttt{ADPM} have noticeably slower decay rate than \texttt{ADPM-y},  \texttt{ADMM-1}, and \texttt{ADMM-10}.
  In Fig.~\ref{fig:Localization_grad}, \texttt{DGD}, \texttt{ADPM}, and \texttt{ADPM-y} have noticeably slower decay rate than \texttt{ADMM-1} and \texttt{ADMM-10}.
 Therefore, the results suggest that it can be beneficial to use the update~\eqref{secNum:dualUp}.

   Fig.~\ref{fig:Localization_lam} depicts an example of a dual variable for each of the algorithms where the  the update~\eqref{secNum:dualUp} is used.
  Similar results were observed for the other dual variables. 
  The figure shows that the dual variables converge, implying that \texttt{ADMM-1} and \texttt{ADMM-10} converge based on Proposition~\ref{prop:ADMM-main-prop}.

  Fig.~\ref{fig:Localization_res} depicts the objective value at each iteration.
  The algorithms achieve different objective values, which is not surprising since the objective is function nonconvex with multiple local minima. 
  Fig~\ref{fig:Localization_results2} depicts the resulting location estimations for each algorithm, i.e., the estimation at the final iteration.
  Note that the orange diamond and five-pointed star lay under their purple counter parts and are therefore not visible in the figure.
  Despite the nonconvexities, all the algorithms converge to a good estimations close to the true locations of the nodes. 
  The \texttt{DGD} achieves a visually better estimation of the diamond and the five-pointed star in Fig~\ref{fig:Localization_results2} than the other algorithms.
  Nevertheless,  \texttt{ADMM-1} and \texttt{ADMM-10}  achieve much better objective function values. 

 \begin{remark}
    The gradients of $f_n$ for $n\in \mathcal{N}$ are unbounded, but still Assumption~\eqref{assumption:ADPM-unCon-2}.b holds, which  ensures that the sequence $(\vec{x}(t),\vec{z}(t))$ of \texttt{ADPM} is bounded, see Lemma~\ref{sec3ADPM:Lemma1}.
  Similar results can be derived for \texttt{ADPM-y}, \texttt{ADMM-1}, and \texttt{ADMM-10} as long as the dual variables $\vec{y}(t)$ are bounded.
 On the other hand, from our numerical experiences, \texttt{DGD} turned out to be unstable for many initialisations where it reached floating point infinity in only few iterations.

 \end{remark}

}

  \section{Conclusions}
    We investigated the convergence behaviour of scalable variants of two standard nonconvex optimization methods:
 a novel method we call Alternating Direction Penalty Method and the well known Alternating Direction Method of Multipliers, variants of the Quadratic Penalty Method and the Method of Multipliers, respectively.
 Our theoretical results showed the ADPM asymptotically reaches primal feasibility under assumptions that hold widely in practice and provided sufficient conditions for when ADPM asymptotically reaches the first order necessary conditions for optimality. 
  Furthermore, we provided sufficient conditions for the asymptotic convergence of ADMM to the first order necessary condition for local optimality and provided a class of problems where thous conditions hold.
 Finally, we demonstrated how the methods can be used to design distributed algorithms for \emph{nonconvex} cooperative localization in wireless sensor networks. 

\bibliography{refs}
\bibliographystyle{IEEEtran}

\end{document}